\newcommand{\mk}{\medskip}
\newcommand{\ZZ}{\mathbb{Z}}
\newcommand{\CC}{\mathbb{C}}
\newcommand{\NN}{\mathbb{N}}
\newcommand{\Glie}{\mathfrak{g}}
\newcommand{\Hlie}{\mathfrak{h}}
\newcommand{\demo}{\noindent {\it \small Proof:}\quad}
\newcommand{\U}{\mathcal{U}}
\newtheorem{thm}{Theorem}[section]
\newtheorem{defi}[thm]{Definition}
\newtheorem{prop}[thm]{Proposition}
\newtheorem{lem}[thm]{Lemma}
\newtheorem{rem}[thm]{Remark}
\title{Geometry of the analytic loop group}
\author{Corrado De Concini}
\address{Dip. Mat. Castelnuovo, Univ. di Roma La Sapienza, Rome, ITALY}
\email{deconcin@mat.uniroma1.it}
\author{David Hernandez}
\address{Univ. Paris Diderot-Paris 7,IMJ - PRG CNRS UMR 7586,
B\^at. Sophie Germain, Case 7012,
75205 Paris Cedex 13, FRANCE}
\email{hernandez@math.jussieu.fr}
\author{Nicolai Reshetikhin}
\address{Dep. of Math., UC Berkeley, 970 Evans Hall, Berkeley, CA 94720, USA}
\email{reshetik@math.berkeley.edu}
\begin{document}

\begin{abstract} We introduce and study a notion of analytic loop group with a Riemann-Hilbert factorization  relevant for the representation theory of quantum affine algebras at roots of unity $\U_\epsilon(\hat{\Glie})$ with non trivial central charge. We introduce  a Poisson structure and study properties of its Poisson dual group. We prove that the Hopf-Poisson structure is isomorphic to the semi-classical limit of the center of $\U_\epsilon(\hat{\Glie})$ (it is a geometric realization of the center). Then the symplectic leaves, and corresponding equivalence classes of central characters of $\U_\epsilon(\hat{\Glie})$, are parameterized by certain $G$-bundles on an elliptic curve.
\end{abstract}

\maketitle

\tableofcontents

\section{Introduction}

Let $\Glie$ be finite dimensional semi-simple Lie algebra and $\U_q(\Glie)$ the corresponding quantized universal enveloping algebras. When  $q$ is a formal variable the representation theory of $\U_q(\Glie)$ is very similar to the representation theory of $\Glie$. In particular the category of finite dimensional representation is semi-simple and simple (type $1$) finite dimensional  representations are parameterized by dominant integral weights.

When the formal variable $q$ is specialized at a root of unity $\epsilon$ the representation theory changes drastically.
There are many such specializations of $\U_q(\Glie)$. The best
known are with divided powers \cite{lj} and with the big
center \cite{dck}. In the later case the algebra is a
finite  module over its center (see \cite{b2, dc, dck, dcp, dckp, e1, e2} and references therein). The center of such a specialization is a finite module over a commutative Hopf algebra which
is isomorphic to the algebra of polynomial functions on an affine algebraic Lie group whose Lie algebra is $\Glie$. Thus, geometry comes into the picture.

Moreover the representation theory of $\U_\epsilon(\Glie)$ is known to be closely related to that of   Lie algebras in positive characteristic \cite{ajs} (see \cite{bmr} for recent results in positive characteristic, and \cite{bak} for related results on quantum groups at roots of unity). Recently the tensor product decomposition numbers of simple representations in the root of unity case have been studied in \cite{dcprr}.

Affine Kac-Moody algebras $\hat{\Glie}$ are infinite dimensional analogues of semi-simple Lie algebras $\Glie$ \cite{kac}. They are also central extensions of loop algebras on $\Glie$. They and their quantum counterparts $\U_q(\hat{\Glie})$, called quantum affine algebras,
play an instrumental role in such areas of mathematical physics as conformal field theory and integrable quantum field theories.
There are two important classes of irreducible representations
of $\hat{\Glie}$ and of $\U_q(\hat{\Glie})$ for generic $q$.
One class is highest weight representations and the other is
finite dimensional representations. While the highest weight  representations of $\U_q(\hat{\Glie})$ are similar to those of
$\hat{\Glie}$, finite dimensional representations are quite
different, see for example \cite{cp1} and references
therein.

Also reduction of finite dimensional modules at roots  of 1 have been studied, see for example \cite{cp, fm, h, n2} and references
therein.

In this paper we focus on the specialization of $\U_q(\hat{\Glie})$ to roots of unity with
the large center. In contrast with finite dimensional Lie algebras such specializations are not finite modules over their center. The center is an infinitely generated commutative algebra (a polynomial ring with some generators inverted). Finite dimensional representations of $\U_q(\hat{\Glie})$ have been studied in \cite{bk}. These  representations have trivial central charge.  In this sense they are analogues
of evaluation representations of loop groups. Here we will focus on representations with  generic  central charge. (see section \ref{defqaa} for a precise definition of what this means).

As for finite dimensional Lie algebras, the representation theory at roots of unity is closely related to the geometry of the corresponding Lie group, which is in our situation a certain
extension of the loop group associated to ${\Glie}$. There are different topological versions of loop groups. The main purpose of this paper is the introduction of the version best suited for our goals.

The main results of this paper can be summarized as follows:

\begin{itemize}

\item First we introduce an analytic version of the loop group as the group of germs of holomorphic maps on the punctured disc. This group has the Riemann-Hilbert factorization (from Birkhoff factorization Theorem), it contains rational loops and admits central extension and the extension by the natural action of $\CC^*$. This Lie group has a natural Poisson Lie structure by the double construction.

\item We prove that the center of the specialization of $\U_q(\hat{\Glie})$ to a root of unity is naturally isomorphic as a Hopf Poisson algebra to a   ring of functions on   the Poisson dual of the analytic loop group
(more precisely, as in the finite type case, we do not realize all the center but a large Hopf Poisson subalgebra over which the center is finite). 
This geometric realization is first done for $GL_1$ and affine $GL_2$. For other affine Lie algebras it follows by a reduction to rank 2 argument.

\item We prove that the symplectic leaves are parameterized by certain $G$-bundles on an elliptic curve. The module of the elliptic curve is determined by the central charge of the quantum affine algebra.

\end{itemize}

Although in the present paper we focus on the Poisson structure
of the center of $\U_q(\hat{\Glie})$ at roots of unity and on the geometry of the relevant loop groups itself, we have in mind the study of irreducible representations of these algebras with non-trivial corresponding central charge. Such representations are parameterized by finite covers of equivalence classes of holomorphic $G$-bundles over elliptic curves.

To sum up, we have a correspondence between the following various objects :

\begin{equation*}
\begin{split}
&\text{Isomorphism classes of $G$-bundle on an elliptic curve,}
\\ \leftrightarrow &\text{\ Equivalence classes of $q$-difference equations,}
\\ \leftrightarrow &\text{\ Twisted conjugation classes in loop groups,}
\\ \leftrightarrow &\text{\ Symplectic leaves in loop groups,}
\\ \leftrightarrow &\text{\ Equivalence classes of central characters of $\U_\epsilon(\hat{\Glie})$.}
\end{split}
\end{equation*}

Note that central characters of $\U_\epsilon(\hat{\Glie})$ classify irreducible representations up to a finite cover. As in the finite dimensional case we expect that all irreducible representations for generic central characters will have the same graded dimensions.

As for quantum groups of finite type, one should expect that the study of quantum affine algebras at roots of unity will give an insight to the structure of irreducible representations of affine Lie algebras in positive characteristic.

The present paper is organized as follows : in Section \ref{manin} we give reminder on quantum algebras of finite type at roots of unity, Manin triples, Birkhoff factorizations, twisted conjugation orbits and elliptic curves. In Section \ref{defqaa} we recall the definition of quantum affine algebras with central charge and we prove the existence of a Frobenius isomorphism for the specializations at roots of unity (Proposition \ref{frgl} and \ref{frg}). In Section \ref{defian} the notion analytic loop, which is the main object studied in the present paper, is introduced with its Poisson group structure. We study a Riemann-Hilbert factorization (Theorem \ref{decompa}). In Section \ref{gl1} we start with the $GL(1)$-case which is relevant for our study as it involves the quantum Heisenberg algebra $\U_q(\hat{\mathfrak{gl}}_1)$.  In particular the elliptic curve $\mathcal{E}$ already appears in this case. The main result (Theorem \ref{isom1}) identifies the Hopf Poisson structure with the semi-classical limit of the center of $\U_q(\hat{\mathfrak{gl}_1})$. The symplectic leaves are described in Proposition \ref{sympl}. The next step is the $GL(2)$ (and $SL(2)$) situation studied in Section \ref{sldeux}. We study a Riemann-Hilbert factorization of the analytic loop group (Theorem \ref{decompb}). In particular we see how the relation between Drinfeld generators appear and the main result is Theorem \ref{isomt} where the Hopf Poisson structure is described. The holomorphic symplectic leaves are parameterized in Theorem \ref{sl}. This case is crucial to study the general case in Section \ref{gent} (Theorem \ref{isomg} and \ref{gsl}). In Section \ref{conc} the applications to representation theory and further projects are discussed.

{\bf Acknowledgments :} The authors would like to thank L. Di Vizio, B. Enriquez, V. Fock and E. Frenkel for useful discussions. The first author would like to thank ENS-Paris and Universit\'e de Versailles, the second author would like to thank the CTQM in Aarhus, the U.C. Berkeley and the University La Sapienza for hospitality.

\section{Some basic notions}\label{manin}

In this section we start with some reminders, first of the now standard theory of quantum algebras of finite type at roots of unity, and then of results which will be used in the paper.

\subsection{Quantum algebras of finite type at roots of unity}\label{finite}

Let $\Glie$ be finite dimensional semi-simple Lie algebra and let us consider the quantum group $\U_{\epsilon}(\Glie)$ (quantum algebra of finite type) specialized at a root of unity $\epsilon$ in the sense of De Concini-Kac (we refer to \cite{dcp} for details). Then $\U_\epsilon(\Glie)$ has a large center $Z_\epsilon(\Glie)$ and is finite over $Z_\epsilon(\Glie)$. This is a fundamental property of quantum groups at roots of unity which is not satisfied for a generic parameter of quantization and it allows to use the classical theory of algebras finite over their center. Moreover as we have a large commutative algebra $Z_\epsilon(\Glie)$, geometry comes naturally into the picture. By the Schur Lemma, we have the central character map :
$$\chi : \text{Spec}(\U_\epsilon(\Glie))\rightarrow \text{Spec}(Z_\epsilon(\Glie))$$
where $\text{Spec}(\U_\epsilon(\Glie))$ is the set of simple representations of $\U_\epsilon(\Glie)$ (they are finite dimensional of bounded dimension as $\U_\epsilon(\Glie)$ is finite over its center). Moreover $\chi$ is surjective, has finite fibers and is bijective on an Zariski dense open subset of $\text{Spec}(Z_\epsilon(\Glie))$. In particular we have a parametrization of generic simple representations by $\text{Spec}(Z_\epsilon(\Glie))$.

An important point is that $Z_{\epsilon}(\Glie)$ contains a Hopf subalgebra $Z$ of $\U_\epsilon(\Glie)$ with respect to which $\U_\epsilon(\Glie)$ is a free module of rank $\ell^{\text{dim}\Glie}$.
$Z$ inherits a Poisson algebra structure from the specialization (semi-classical limit) of the quantum group with generic quantization parameter and $\text{Spec}(Z)$ has a Poisson Lie group structure. $\text{Spec}(Z)$ is then identified, using the double construction, to the Poisson dual group to   $G$. The symplectic leaves of $\text{Spec}(Z)$ corresponds on one hand to conjugacy classes in $G$, and on the other hand via $\chi$ to equivalence classes of simple representations of $\U_\epsilon(\Glie)$ (the equivalence for a group of automorphism of $\U_\epsilon(\Glie)$).

A remarkable point in this theory is the relevance of the classical Lie group $G$ and its conjugacy classes for the representation theory of a quantum algebra, and how Poisson geometry comes into the picture (see also \cite{bo} for a relationship with isomonodromic deformations of irregular connections on $G$-bundles over the disc). We have a correspondence between the various objects :

\begin{equation*}
\begin{split}
  &\text{\ Conjugacy classes in $G$},
\\ \leftrightarrow &\text{\ Symplectic leaves in $\text{Spec}(Z)$},
\\ \leftrightarrow &\text{\ Equivalence classes of central characters for $\U_\epsilon(\Glie)$},
\\ \leftrightarrow &\text{\ Equivalence classes of generic representations of $\U_\epsilon(\Glie)$}.
\end{split}
\end{equation*}

\noindent This is a very interesting example of interactions between various geometric objects and algebraic representation theory.
One of the aims of the present paper is to extend this picture to the affine case. We will see that other geometric objects come into the picture.

\subsection{Manin triple and Poisson Lie groups}\label{ph}

We give some reminders (see \cite{dcp} for details) about Manin triples and the corresponding Poisson Lie groups.

Consider $(G,H,K)$ Lie groups with $H,K\subset G$  and suppose that we have a Manin triple for the corresponding Lie algebras $(\Glie, \Hlie,\mathfrak{K})$. This means  that we have a   decomposition $\Glie = \Hlie \oplus \mathfrak{K}$ as vector spaces and we have a non-degenerated, symmetric and invariant bilinear form $(,)$ such that $\Hlie$, $\mathfrak{K}$ are maximal isotropic. There is an induced Poisson group structure on $H$.   Consider $\pi : \Glie \rightarrow \Hlie$ the projection with kernel $\mathfrak{K}$, and for $h\in H$ consider $\pi^h : \Glie\rightarrow \Hlie$ defined by $\pi^h = Ad(h^{-1})\circ \pi\circ Ad(h)$. For $x,y\in \mathfrak{K}$, we set $<x,y>_h = (\pi^h(x),y)$. Then for $f_1,f_2 $ functions on $H$, the bracket $\{f_1,f_2\}$ is defined for $h\in H$ by :
$$\{f_1, f_2\}(h) = < dl_h^*(df_1(h)), dl_h^*(df_2(h))>_h,$$
where $l_h : H\rightarrow H$ defined by $l_h(g) = hg$ gives an isomorphism
$$dl_h^* : (T_h H)^*\rightarrow \Hlie^* = \mathfrak{K}.$$
\begin{rem} In the same way for $\alpha, \beta$ differential forms we can define the function $\{\alpha,\beta\} $ by the  formula $\{\alpha,\beta\}(h) = < dl_h^*(\alpha(h)), dl_h^*(\beta(h))>_h$.\end{rem}

\noindent Let $p:H\rightarrow K \setminus G$ be the restriction to $H$ of the projection to $K\setminus G$. We suppose that $G,K,H$ are connected. Then :
\begin{prop}\label{symp}
The symplectic leaves in $H$ are the connected components of the preimages under $p$ of the $K$ orbits in $K\setminus G$.
\end{prop}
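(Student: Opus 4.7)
The statement is an infinitesimal one: by the standard characterization, symplectic leaves of a Poisson manifold are the maximal connected integral submanifolds of the characteristic distribution $D_h=\mathrm{Image}(P_h^\sharp)\subseteq T_hH$, where $P_h^\sharp:T^*_hH\to T_hH$ is the Poisson anchor. It therefore suffices to identify $D_h$ with the tangent space at $h$ to $p^{-1}(K\text{-orbit of }Kh)$ for every $h\in H$.

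First I would work in left-trivialized coordinates, using $dl_h$ to identify $T_hH\cong\Hlie$ and $T^*_hH\cong\Hlie^*\cong\mathfrak{K}$ through the non-degenerate restriction of the Manin form to $\Hlie\times\mathfrak{K}$. The bracket formula $\{f_1,f_2\}(h)=(\pi^h(\alpha),\beta)$ (with $\alpha=dl_h^*(df_1(h))$, $\beta=dl_h^*(df_2(h))\in\mathfrak{K}$) together with the defining identity $\{f_1,f_2\}(h)=\langle df_2(h),X_{f_1}(h)\rangle$ then yields $dl_h^{-1}(X_{f_1}(h))=\pi(\pi^h(\alpha))\in\Hlie$, and hence $D_h$, transported to $\Hlie$, equals $\pi(\pi^h(\mathfrak{K}))$.

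Second, I would identify $p^{-1}(K\text{-orbit of }Kh)$. The right $K$-action on $K\backslash G$ has orbit $\{Khk\mid k\in K\}$ through $Kh$, with preimage $H\cap KhK$ in $H$. Locally near $h$ (and globally in the analytic loop group setting by the Riemann-Hilbert factorization, Theorem \ref{decompa}), this intersection is the orbit through $h$ of the \emph{dressing action} $k\cdot h=\text{$H$-component of }kh$ of $K$ on $H$. Differentiating this action along $X\in\mathfrak{K}$ and splitting $\mathrm{Ad}(h^{-1})X=A+B$ with $A\in\Hlie$, $B\in\mathfrak{K}$, one finds that $\dot k(0)=B$ is forced, whence $dl_h^{-1}(X\cdot h)=A=\pi(\mathrm{Ad}(h^{-1})X)$. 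Thus the tangent space at $h$ to the preimage is $\pi(\mathrm{Ad}(h^{-1})\mathfrak{K})\subseteq\Hlie$.

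The crux of the argument is then the linear-algebra identity
\[
\pi(\pi^h(\mathfrak{K}))=\pi(\mathrm{Ad}(h^{-1})\mathfrak{K})=\Hlie\cap(\mathfrak{K}+\mathrm{Ad}(h^{-1})\mathfrak{K}).
\]
Unpacking $\pi^h=\mathrm{Ad}(h^{-1})\circ\pi\circ\mathrm{Ad}(h)$ and decomposing $\mathrm{Ad}(h)\alpha$ along $\Glie=\Hlie\oplus\mathfrak{K}$ gives $\pi^h(\alpha)=\alpha-\mathrm{Ad}(h^{-1})\pi_{\mathfrak{K}}(\mathrm{Ad}(h)\alpha)\in\mathfrak{K}+\mathrm{Ad}(h^{-1})\mathfrak{K}$, and applying $\pi$ yields the inclusion $\pi(\pi^h(\mathfrak{K}))\subseteq\Hlie\cap(\mathfrak{K}+\mathrm{Ad}(h^{-1})\mathfrak{K})$; a dual argument handles $\pi(\mathrm{Ad}(h^{-1})\mathfrak{K})$, and the reverse inclusions are obtained by exploiting the maximal isotropy of $\Hlie$ and $\mathfrak{K}$ for $(,)$. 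Once matched, Weinstein's (Stefan-Sussmann) theorem for Poisson manifolds identifies the symplectic leaves with the maximal connected integral submanifolds of $D$, hence with the connected components of $p^{-1}(K\text{-orbit})$ as claimed.

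The main obstacle is precisely this matching identity: a naive dimension count suggests a possible defect of size $\dim(\Hlie\cap\mathrm{Ad}(h)\mathfrak{K})$ between the two subspaces, and absorbing it requires the full force of the Manin triple structure — the self-duality provided by $(,)$ together with the fact that \emph{both} $\Hlie$ and $\mathfrak{K}$ are maximally isotropic. In the infinite-dimensional analytic loop-group context one must additionally justify the existence of the required splittings and the smooth-submanifold structure on $H\cap KhK$; this is exactly where Theorem \ref{decompa} intervenes.
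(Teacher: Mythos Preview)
The paper does not actually prove Proposition~\ref{symp}: it is stated in Section~\ref{manin} as a \emph{reminder}, with the reader referred to \cite{dcp} for details. So there is no in-paper proof to compare against.

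Your outline is essentially the standard argument (and indeed the one in \cite{dcp}): identify the characteristic distribution in left-trivialized form, compute the tangent to $H\cap KhK$ via the dressing action, and match the two subspaces of $\Hlie$. The strategy is sound and the computation $\pi(\pi^h(\alpha))=-\pi\bigl(\mathrm{Ad}(h^{-1})\pi_{\mathfrak K}(\mathrm{Ad}(h)\alpha)\bigr)$ immediately gives $\pi(\pi^h(\mathfrak K))\subseteq\pi(\mathrm{Ad}(h^{-1})\mathfrak K)$. You are right that the reverse inclusion is where the isotropy of both $\Hlie$ and $\mathfrak K$ is used; in your write-up this step is asserted rather than carried out, so if you intend this as a full proof you should supply it (the cleanest route is to show that both subspaces have the same annihilator in $\mathfrak K\cong\Hlie^*$ under the Manin pairing).

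One organizational point: Proposition~\ref{symp} is stated for a general Manin triple $(G,H,K)$, well before any loop group enters. Your parenthetical appeal to Theorem~\ref{decompa} is therefore out of place here --- that theorem comes later and concerns the specific analytic loop group, whereas the proposition only needs the local factorization $G\supset HK$ near $h$, which follows from $\Glie=\Hlie\oplus\mathfrak K$. Drop that reference (or relegate it to a remark about the later applications) and the sketch stands on its own.
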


If the triple $(G,H,K)$ is algebraic, namely $G$ is an algebraic group and $H$ and $K$ are algebraic subgroups in $G$, then the Poisson bracket of two regular functions on $H$ is again a regular function and the coordinate ring $\mathbb C[H]$ becomes a Poisson Hopf algebra. This means that if we denote by $\Delta:\mathbb C[H]\to\mathbb C[H]\otimes \mathbb C[H]$ the comultiplication, by $S:\mathbb C[H]\to\mathbb C[H]$ the antipode
$$\Delta(\{f,g\})=\{\Delta f,\Delta g\}$$
with $\{f_1\otimes f_2,g_1\otimes g_2\}:=f_1g_1\otimes \{f_2,g_2\}+ \{f_1,g_1\}\otimes f_2g_2$ for $f_1,g_1,f_2,g_2\in \mathbb C[H]$.

In what follows in order to compare different Poisson structures we are going to use the following simple facts which we recall here for convenience.

\begin{lem}\label{inv} Let $H$ be an algebraic group. Given  $a\in \mathbb C[H]$, the left invariant differential form coinciding with $da$ in $1$ equals $\sum S(a_{(1)}) da_{(2)}$ (here we use the Sweedler notations $\Delta(a) = \sum a_{(1)}\otimes a_{(2)}$).
\end{lem}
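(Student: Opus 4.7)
The plan is to write the standard Hopf-algebraic computation explicitly. Let $\omega$ denote the left-invariant differential form on $H$ whose value at the identity is $da|_1$. First I would recall the description of left-invariance pointwise: if $L_g: H\to H$ is left translation by $g\in H$, then a left-invariant form $\omega$ satisfies $L_g^*\omega=\omega$, equivalently $\omega_g=(dL_{g^{-1}})^*\omega_1$ for every $g\in H$. Thus it suffices to show, for every $g\in H$ and every tangent vector $v\in T_gH$, that
\[
\sum S(a_{(1)})(g)\,(da_{(2)})_g(v) \;=\; (da|_1)\bigl(dL_{g^{-1}}(v)\bigr).
\]

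Next I would translate the right-hand side into a derivation. By definition, $dL_{g^{-1}}(v)$ acts on any function $b\in\mathbb C[H]$ by $v(L_{g^{-1}}^*b)=v(b\circ L_{g^{-1}})$, so the right-hand side equals $v(L_{g^{-1}}^*a)$. Here the key Hopf-algebraic identity is
\[
(L_{g^{-1}}^*a)(h)=a(g^{-1}h)=\sum a_{(1)}(g^{-1})\,a_{(2)}(h)=\sum S(a_{(1)})(g)\,a_{(2)}(h),
\]
using $\Delta(a)=\sum a_{(1)}\otimes a_{(2)}$ together with the identity $a_{(1)}(g^{-1})=S(a_{(1)})(g)$ defining the antipode on $\mathbb C[H]$. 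Applying $v$ (a derivation at $g$), the scalars $S(a_{(1)})(g)$ come out of $v$ and we obtain $v(L_{g^{-1}}^*a)=\sum S(a_{(1)})(g)\,v(a_{(2)})=\sum S(a_{(1)})(g)\,(da_{(2)})_g(v)$, which matches the left-hand side.

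This already shows that the form $\sum S(a_{(1)})\,da_{(2)}$ is left-invariant (it is visibly equal to $(dL_{g^{-1}})^*\omega_1$ at every $g$). It remains to check the value at $g=1$: since $S(a_{(1)})(1)=a_{(1)}(1)=\varepsilon(a_{(1)})$ (where $\varepsilon$ is the counit), the counit axiom $\sum\varepsilon(a_{(1)})\,a_{(2)}=a$ gives
\[
\sum S(a_{(1)})(1)\,(da_{(2)})_1 \;=\; d\Bigl(\sum\varepsilon(a_{(1)})\,a_{(2)}\Bigr)_1 \;=\; da|_1,
\]
completing the identification of $\sum S(a_{(1)})\,da_{(2)}$ with $\omega$.

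There is no serious obstacle; the only point demanding a little care is consistently tracking the Hopf-algebra conventions, in particular the identity $a(g^{-1})=S(a)(g)$ and the chain of equalities $dL_{g^{-1}}(v)(b)=v(L_{g^{-1}}^*b)$, so that the computation of $(dL_{g^{-1}})^*(da|_1)$ reproduces exactly the Sweedler expression stated in the lemma.
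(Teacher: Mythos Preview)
Your proof is correct. The paper actually states this lemma without proof, presenting it as one of the ``simple facts which we recall here for convenience,'' so there is no argument in the paper to compare against; your standard Hopf-algebraic verification (computing $L_{g^{-1}}^*a$ via the comultiplication and antipode, then checking the value at $1$ with the counit axiom) is exactly the expected justification.
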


\begin{lem}\label{isom} Let $R,R'$ be the  two Poisson Hopf algebras  and $\Psi : R\rightarrow R'$ a ring isomorphism. Assume that are given elements $r_i\in R$ which generate $R$ as a Poisson algebra and such that :

$\forall r\in R$, we have $\Psi(\{r_i,r\}) = \{\Psi(r_i),\Psi(r)\}$,

$(\Psi\otimes \Psi)(\Delta(r_i)) = \Delta (\Psi(r_i))$,

$\Psi(S(r_i)) = S(\Psi(r_i))$.

\noindent Then $\Psi$ is an isomorphism of Poisson Hopf algebras.
\end{lem}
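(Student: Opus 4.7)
The plan is a standard ``propagate compatibility from generators'' argument: for each of the three structures---Poisson bracket, comultiplication, antipode---I define the subset of $R$ on which $\Psi$ intertwines the structure, show this subset is stable under the operations used to build $R$ as a Poisson algebra (scalars, sums, products, Poisson brackets), and conclude that it equals $R$ because it contains the generating set $\{r_i\}$.

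For the Poisson bracket, set $S_b := \{r\in R \mid \Psi(\{r,s\}) = \{\Psi(r),\Psi(s)\}\text{ for all }s\in R\}$. By hypothesis $r_i\in S_b$, and additive/scalar closure is immediate. Closure under multiplication uses the Leibniz rule $\{ab,s\} = a\{b,s\} + \{a,s\}b$ together with the fact that $\Psi$ is a ring homomorphism, which rewrites $\Psi(\{ab,s\})$ as $\{\Psi(ab),\Psi(s)\}$. Closure under the Poisson bracket uses the Jacobi identity $\{\{a,b\},s\} = \{a,\{b,s\}\} - \{b,\{a,s\}\}$, which reduces $\Psi(\{\{a,b\},s\})$ to nested brackets already known to be preserved. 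Thus $S_b$ is a Poisson subalgebra of $R$ containing the $r_i$, so $S_b=R$. The antisymmetry of the bracket then promotes the identity from $S_b\times R$ to all of $R\times R$, so $\Psi$ is a Poisson map globally.

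For the comultiplication, set $T := \{r\in R \mid (\Psi\otimes\Psi)\Delta(r) = \Delta(\Psi(r))\}$. Stability under scalars, sums and products is clear because $\Delta$ and $\Psi\otimes\Psi$ are ring homomorphisms. For closure under the Poisson bracket I use the Hopf--Poisson identity $\Delta(\{a,b\}) = \{\Delta(a),\Delta(b)\}$ recalled in the excerpt, combined with the observation---immediate from Step~1 and the explicit formula for the tensor bracket given above---that $\Psi\otimes\Psi$ preserves the Poisson bracket on $R\otimes R$. The antipode is handled in the same style: let $U := \{r\in R \mid \Psi(S(r)) = S(\Psi(r))\}$; closure under sums and products is immediate since in the commutative setting of coordinate rings of Poisson algebraic groups the antipode is a ring homomorphism, and closure under Poisson bracket follows from the anti-Poisson relation $S(\{a,b\}) = -\{S(a),S(b)\}$ satisfied by the antipode of a Poisson Hopf algebra, combined again with Step~1.

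Since each of $S_b$, $T$ and $U$ is a Poisson subalgebra of $R$ containing the generators $r_i$, each equals $R$, and $\Psi$ respects the bracket, the coproduct and the antipode on all of $R$. The argument is essentially routine; the only delicate point is the antipode step, where one must invoke the anti-Poisson character of $S$ so that the sign in $S(\{a,b\}) = -\{S(a),S(b)\}$ cancels consistently on both sides of the required identity.
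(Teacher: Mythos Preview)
The paper does not actually prove this lemma: it is listed among ``simple facts which we recall here for convenience'' and is stated without argument. So there is no paper proof to compare against, only the statement.

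Your proof is correct and is exactly the routine argument the authors have in mind. The three ``compatibility loci'' $S_b$, $T$, $U$ are each Poisson subalgebras containing the $r_i$, hence equal $R$; the verifications you sketch (Leibniz for products, Jacobi for brackets, the Hopf--Poisson identity $\Delta\{a,b\}=\{\Delta a,\Delta b\}$ for $T$, and the anti-Poisson property $S\{a,b\}=-\{Sa,Sb\}$ for $U$) are the right ones. Two minor remarks: first, the anti-Poisson property of the antipode that you invoke is not part of the definition given in the paper (only the compatibility of $\Delta$ is stated), but it is a standard consequence in the commutative case---geometrically, inversion on a Poisson--Lie group is anti-Poisson---so this is fine. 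Second, once you know $\Psi$ is a ring isomorphism intertwining $\Delta$, preservation of the counit and of the antipode actually follow automatically from their uniqueness, so your Step~3 is in a sense redundant; but checking it directly does no harm.
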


\subsection{Birkhoff factorization}

Let $S_1 = \{z\in\CC||z|=1\}$ be the unit circle. Consider the covering $(\mathcal{D}^+, \mathcal{D}^-)$ of the Riemann sphere $\mathbb{P}_1(\CC)$ defined by
$$\mathcal{D}^+ = \{z\in\CC| |z|\leq 1\}\text{ , }\mathcal{D}^- = \{z\in\CC | |z|\geq 1\}\cup\{\infty\}.$$
We also denote $\mathbb{P}_1(\CC)^* = \mathbb{P}_1(\CC)\setminus\{0\}$ and for $R > 0$, $\mathcal{D}^R = \{z\in\CC||z|\leq R\}$ and  $(\mathcal{D}^R)^* = \mathcal{D}^R \setminus \{0\}$.

We recall that the Lie group $GL_n(\CC)$ is connected and non simply-connected with fundamental group $\ZZ$.

Let $G$ be a finite dimensional complex algebraic group and $D$ a Cartan subgroup. $D$ is a maximal torus of dimension $n$ and so for $a_1,\cdots , a_n\in\ZZ$, choosing coordinates in $D$, we can consider the homomorphism $\lambda_{(a_1,\cdots,a_n)} : S_1\rightarrow D$ given by coordinates $(z^{a_1},\cdots,z^{a_n})$. In the case of $GL_n(\CC)$ we get maps $z\mapsto\text{diag}(z^{a_1},\cdots, z^{a_n})$.

From a theorem of Bikhoff \cite{bi1, bi2} we have (we state the version of \cite[Theorem 8.1.2]{ps}; it is explained in \cite[Section 8]{ps} that it holds for general $G$) :

\begin{thm}\label{birk} Let $\gamma : S_1\rightarrow G$ be a smooth map. Then $\gamma$ can be factorized in the form :
$$\gamma = \gamma_- \lambda \gamma_+$$
where $\gamma_\pm$ can be holomorphicaly extended to $\mathcal{D}^\pm$ and $\lambda = \lambda_{(a_1,\cdots,a_n)}$ (where $a_i\in\ZZ$). $\lambda$ is uniquely determined up to a permutation of the $a_i$. If $\lambda = 1$, the decomposition is unique if we assume that $\gamma_-(\infty) = 1$.
\end{thm}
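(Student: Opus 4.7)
The plan is to run through the standard bundle-theoretic proof of Birkhoff factorization, first for $G=GL_n(\CC)$ and then reduce the general reductive case to it.

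\textbf{Step 1: Bundle interpretation.} First, I would fatten the cover $(\mathcal{D}^+,\mathcal{D}^-)$ slightly so that their intersection is an open annulus $A$ containing $S_1$, and use that a smooth loop $\gamma:S_1\to G$ extends (after a small smoothing) to a holomorphic map on $A$ or at least admits arbitrarily good approximation by loops that do. Concretely, one can replace $\gamma$ by a real-analytic representative in the same smooth homotopy class (possible because $G$ is a complex Lie group and smooth functions on $S_1$ are dense in the appropriate spaces of analytic loops), and then view the result as a holomorphic transition function. This produces a holomorphic principal $G$-bundle $P_\gamma$ on $\mathbb{P}_1(\CC)$ by gluing the trivial bundles on $\mathcal{D}^+$ and $\mathcal{D}^-$ along $A$ via $\gamma$.

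\textbf{Step 2: Splitting of the bundle.} For $G=GL_n(\CC)$, the Birkhoff--Grothendieck theorem says that every holomorphic vector bundle on $\mathbb{P}_1(\CC)$ splits as $\mathcal{O}(a_1)\oplus\cdots\oplus\mathcal{O}(a_n)$ with $a_i\in\ZZ$ uniquely determined up to permutation. For a general reductive $G$, I would invoke Grothendieck's theorem on reduction of holomorphic $G$-bundles on $\mathbb{P}_1(\CC)$ to the Cartan subgroup: every such bundle is induced from a $D$-bundle $\mathcal{L}_{(a_1,\dots,a_n)}$ classified by a cocharacter $\lambda_{(a_1,\dots,a_n)}:S_1\to D$, with the cocharacter uniquely determined up to the Weyl group action (this is Weyl conjugacy, which corresponds exactly to the permutation indeterminacy stated in the theorem).

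\textbf{Step 3: Reading off the factorization.} Comparing any two holomorphic trivializations of $P_\gamma$ over $\mathcal{D}^+$ and $\mathcal{D}^-$ with the trivialization coming from $\mathcal{L}_{(a_1,\dots,a_n)}$, one obtains holomorphic maps $\gamma_+:\mathcal{D}^+\to G$ and $\gamma_-:\mathcal{D}^-\to G$ and the identity $\gamma=\gamma_-\,\lambda\,\gamma_+$ on $S_1$, which is the desired factorization. Uniqueness of the torus type $\lambda$ (up to permutation) is precisely the uniqueness in Step 2.

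\textbf{Step 4: Uniqueness when $\lambda=1$.} Suppose $\gamma_-\gamma_+=\gamma_-'\gamma_+'$ with both factorizations holomorphic on the respective discs and $\gamma_-(\infty)=\gamma_-'(\infty)=1$. Then $\delta:=(\gamma_-')^{-1}\gamma_-=\gamma_+'\gamma_+^{-1}$ agrees on $S_1$ with a map holomorphic on $\mathcal{D}^+$ on one side and on $\mathcal{D}^-$ on the other, hence extends to a holomorphic map $\mathbb{P}_1(\CC)\to G$. Since $G$ is an affine algebraic group, any holomorphic map from the compact Riemann surface $\mathbb{P}_1(\CC)$ to $G$ is constant (regular functions on $G$ pulled back become global holomorphic functions on $\mathbb{P}_1(\CC)$, hence constants); the normalization $\delta(\infty)=1$ then forces $\delta=1$.

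\textbf{Main obstacle.} The principal difficulty is not the uniqueness in Step~4 but Step~2 in the non-$GL_n$ case: the reduction of holomorphic $G$-bundles on $\mathbb{P}_1(\CC)$ to the Cartan is genuinely more subtle than Birkhoff--Grothendieck and relies on Grothendieck's classification theorem for principal bundles on $\mathbb{P}_1(\CC)$ together with the structure theory of reductive groups. Everything else is either formal or a direct consequence of Liouville's theorem and the compactness of $\mathbb{P}_1(\CC)$.
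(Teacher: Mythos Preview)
The paper does not prove this theorem at all: it is quoted as a classical result, with reference to Birkhoff's original papers and to \cite[Theorem 8.1.2]{ps} (where the extension to general $G$ is also explained). So there is no ``paper's own proof'' to compare against; your proposal is a sketch of one of the standard arguments.

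That said, your sketch has a genuine gap at Step~1. A smooth map $\gamma:S_1\to G$ need not extend holomorphically to any annulus around $S_1$, so there is no holomorphic transition function and hence no holomorphic bundle $P_\gamma$ to which Grothendieck's theorem applies. Your workaround --- replacing $\gamma$ by a real-analytic loop in the same homotopy class, or approximating by such loops --- does not repair this: it would give a factorization of the approximating loop, not of $\gamma$ itself, and you give no argument for why the factors $\gamma_\pm$ (or the torus type $\lambda$) depend continuously enough on the loop to pass to a limit. This is not a minor technicality; it is exactly the point where the smooth case is harder than the holomorphic or polynomial case. The treatment in Pressley--Segal for smooth loops goes through the Grassmannian model and Fredholm theory rather than through Grothendieck's bundle classification, precisely to avoid this issue. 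Steps~2--4 are fine once one is in the holomorphic setting (and Step~4 is correct as written), but as a proof of the statement for smooth $\gamma$ the argument is incomplete.
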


This Theorem is a Riemann-Hilbert factorization result and will be used to prove other similar factorizations in what follows. It has many applications, as for example recently in \cite{cm}.

\subsection{Twisted conjugacy classes and elliptic curves}

Let $G$ be a connected complex
Lie group and $G(\CC^*)_{hol}$ be the group of all holomorphic maps $a: \CC^*\rightarrow G$ (possibly with
an essential singularity at $z = 0$). Fix $\Gamma\in\CC^*$ with $|\Gamma| \neq 1$. Set $\Theta=\Gamma^4$. The group $G(\CC^*)_{hol}$
is stable for the change of variable $g(z)\mapsto g(z\Theta)$ (the reason of this strange normalization will appear later on). So we can define the twisted conjugation of $G(\CC^*)_{hol}$ on itself by :
$$g(z) . a(z) = g(\Theta z)a(z)g(z)^{-1}.$$
Consider the elliptic curve :
$$\mathcal{E} = \CC^*/(z\sim \Theta z).$$
As explained in \cite{ef, bg}, Looijenga (unpublished work) proved the following beautiful parametrization of twisted conjugation classes :
\begin{thm}\label{hlg} There is a natural bijection between the set of all twisted conjugacy
classes in $G(\CC^*)_{hol}$ and the set of isomorphism classes of arbitrary holomorphic $G$-bundles
on $\mathcal{E}$.
\end{thm}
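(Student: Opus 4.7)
The plan is to set up the bijection via the classical \emph{cocycle} construction relating $\Theta$-difference data on $\CC^*$ to holomorphic principal $G$-bundles on the quotient $\mathcal{E}=\CC^*/\langle z\sim\Theta z\rangle$.

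First I would go from cocycles to bundles. Given $a\in G(\CC^*)_{hol}$, form the quotient
$$P_a \;=\; (\CC^*\times G)/\!\sim\,,\qquad (z,h)\sim(\Theta z,\, a(z)\,h),$$
which carries a holomorphic principal $G$-bundle structure on $\mathcal{E}$ under right translation $(z,h)\mapsto(z,hg)$. A direct check shows that if $a'(z)=g(\Theta z)\,a(z)\,g(z)^{-1}$ for some $g\in G(\CC^*)_{hol}$, then the map $(z,h)\mapsto(z,g(z)^{-1}h)$ descends to a bundle isomorphism $P_{a'}\xrightarrow{\sim}P_{a}$. Thus twisted conjugacy classes map to isomorphism classes of $G$-bundles on $\mathcal{E}$, giving a well-defined map in one direction.

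Next I would construct the inverse. Let $P\to\mathcal{E}$ be a holomorphic principal $G$-bundle and let $\pi:\CC^*\to\mathcal{E}$ be the covering map with deck group $\ZZ$ generated by $z\mapsto\Theta z$. The pullback $\pi^*P$ is a holomorphic $G$-bundle on $\CC^*$ carrying a canonical lift of the $\ZZ$-action. Granting that $\pi^*P$ is holomorphically trivial, pick a trivialization $\tau:\pi^*P\xrightarrow{\sim}\CC^*\times G$; then the generator of $\ZZ$ acts, through $\tau$, by $(z,h)\mapsto(\Theta z,\,a(z)h)$ for a uniquely determined $a\in G(\CC^*)_{hol}$. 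Replacing $\tau$ by another trivialization changes $a$ by twisted conjugation as above, so the twisted conjugacy class of $a$ depends only on $P$, and $P\cong P_a$ by construction. Conversely, two cocycles producing isomorphic bundles must be twisted-conjugate, as one sees by pulling back the isomorphism and comparing the two trivializations. This furnishes the inverse map.

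The nontrivial ingredient, and the main obstacle, is the assertion that every holomorphic principal $G$-bundle on $\CC^*$ is trivial. I would deduce this from Grauert's Oka principle: $\CC^*$ is a Stein manifold, so for a complex Lie group $G$ the holomorphic and the continuous classifications of principal $G$-bundles on $\CC^*$ coincide. Topologically $\CC^*\simeq S^1$, hence these classes are in bijection with $[S^1,BG]=\pi_0(G)$, which vanishes since $G$ is connected by hypothesis. Once this is in hand, the remaining verifications (well-definedness of both maps, their mutual inverseness, and the fact that $|\Theta|\neq 1$ makes the $\ZZ$-action on $\CC^*$ free and properly discontinuous so that $\mathcal{E}$ is genuinely an elliptic curve and $\pi$ a genuine covering) are routine.
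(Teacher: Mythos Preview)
Your argument is correct and matches the construction the paper sketches. Note, however, that the paper does \emph{not} prove this theorem: it attributes the result to Looijenga (unpublished, as reported in \cite{ef,bg}) and only describes the forward map, associating to $a(z)\in G(\CC^*)_{hol}$ the trivial bundle $\CC^*\times G$ with the $\Theta^{\ZZ}$-equivariant structure $(z,g)\mapsto(\Theta z,a(z)g)$. This is precisely your first step. Your proposal goes further by supplying the inverse and, crucially, identifying the main analytic input --- Grauert's Oka principle on the Stein manifold $\CC^*$ together with the connectedness of $G$ --- needed to trivialize the pullback $\pi^*P$. That identification is the heart of the matter and is exactly what the cited proofs in \cite{bg,ef} rely on as well, so your approach is not genuinely different from the literature, just more explicit than what the present paper records.
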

The bijection is constructed by associating to an element $a(z)\in G(\CC^*)_{hol}$ the trivial holomorphic $G$-bundle
$\CC^*\times G \rightarrow \CC^*$ on $\CC^*$ with $\Theta^{\ZZ}$-equivariant structure given by the action $(z,g) \mapsto (\Theta z, a(z)g)$.

In fact as mentioned in \cite{bg}, this result is directly related to a classification result of $q$-difference equations which is a more natural context for this parameterization.

Suppose that $G(\CC^*)_{hol}$ acts on a space $V$ over the field of holomorphic maps $\CC^*\rightarrow \CC^*$. Then for $A(z)\in G(\CC^*)_{hol}$ we consider the $q$-difference equation $X(\Theta z) = A(z) X(z)$ where $X(z)\in V$ is unknown. Then $X(z)$ is a solution of the equation if and only if $g(z)X(z)$ is a solution of the equation $Y(\Theta z) = (g(\Theta z)A(z)g(z)^{-1})Y(z)$. So the classification of twisted conjugacy classes is equivalent to the classification of the classes of these $q$-difference equations  for the transformation described above.

\begin{rem}  A result similar to Theorem \ref{hlg} for the formal loop group is proved in \cite{bg}. We refer to \cite{a, fmw, l} and references therein for vector and $G$-bundles on elliptic curves. We refer to \cite{et} to other connection between quantum affine algebras and vector bundles on elliptic curves, and to \cite{beg} for another connection to representation theory.\end{rem}

\section{Quantum affine algebras and their specializations}\label{defqaa}

We recall the definition of quantum affine algebras with central charge and we prove the
existence of a Frobenius isomorphism for the specializations at roots of unity
(Proposition \ref{frgl} and \ref{frg}). A particular attention is given to the quantum Heisenberg algebra $\U_q(\hat{\mathfrak{gl}}_1)$.

For $l\in\ZZ, r\geq 0, m\geq m'\geq 0$ we define in $\ZZ[q^{\pm}]$ :
$$[l]_q=\frac{q^l-q^{-l}}{q-q^{-1}}\text{ , }[r]_q!=[r]_q[r-1]_q\cdots [1]_q\text{ ,
}\begin{bmatrix}m\\m'\end{bmatrix}_q=\frac{[m]_q!}{[m-m']_q![m']_q!}.$$

\subsection{The quantum Heisenberg algebra $\U_q(\hat{\mathfrak{gl}}_1)$}

Let us start with the simplest example of quantum affine algebra, that is to say the quantum Heisenberg algebra $\U_q(\hat{\mathfrak{gl}}_1)$. It is a particular example as $\mathfrak{gl}_1$ is not a simple Lie algebra, but we will see in the following that it is of particular importance for the purposes of the present paper.

\begin{defi} $\U_q(\hat{\mathfrak{gl}_1})$ is the $\CC(q)$-algebra with generators $h_m$ ($m\in\ZZ-\{0\}$), $\Lambda^{\pm 1}$, central elements $\Gamma^{\pm 1}$, $k$, and relations :
$$[h_m,h_{-m'}] = \delta_{m,-m'}\frac{1}{m}[m]_q \frac{\Gamma^{2m} - \Gamma^{-2m}}{q - q^{-1}}\text{ , }h_m  \Lambda= q^{2m} \Lambda h_m.$$
\end{defi}
A priori $k$ does not have an important role in the structure of the algebra as it is central and does not appear in the relations, but its importance will appear in Section \ref{pb}.

We have a Hopf algebra structure on $\U_q(\hat{\mathfrak{gl}_1})$ given by :
$$\Delta(h_m) = h_m \otimes \Gamma^{-|m|} + \Gamma^{|m|}\otimes h_m,$$
$$\Delta(\Gamma) = \Gamma\otimes \Gamma\text{ , }\Delta(\Lambda) = \Lambda\otimes \Lambda \text{ , }\Delta(k) = k\otimes k,$$
$$S(h_m) = -h_m\text{ , }S(\Gamma) = \Gamma^{-1}\text{ , }S(\Lambda) = \Lambda^{-1}\text{ , }S(k) = k^{-1},$$
$$\varepsilon(h_m) = 0\text{ , }\varepsilon(\Gamma) = \varepsilon(\Lambda) = \varepsilon(k) = 1.$$

Let $\epsilon$ be a primitive $\ell$-root of unity ($\ell\geq 3$ is odd). Let us consider the specialization $\U_\epsilon(\hat{\mathfrak{gl}}_1)$ of $\U_q(\hat{\mathfrak{gl}}_1)$ at $q = \epsilon$. In order to define it  we consider the $\CC[q^{\pm 1}]$-subalgebra generated by the $(q - q^{-1})h_m$, $\Lambda^{\pm 1}$, $\Gamma^{\pm 1}$ and we quotient by the ideal generated by $(q - \epsilon)$.

\begin{prop} The center of $\mathcal{U}_{\epsilon}(\hat{\mathfrak{gl}}_1)$ is the algebra $Z_{\epsilon}(\hat{\mathfrak{gl}}_1)$ generated by the elements $h_{r\ell}$, $k^{\pm 1}$, $\Gamma^{\pm 1}$, $\Lambda^{\pm \ell}$ ($r\in\ZZ-\{0\}$).\end{prop}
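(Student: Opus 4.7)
My plan is to realize $\U_\epsilon(\hat{\mathfrak{gl}}_1)$ as a skew Laurent ring over the ``Heisenberg'' subalgebra generated by the $h_m$'s and $\Gamma^{\pm},k^{\pm}$, compute the center of this subalgebra by tensor decomposition and the Weyl algebra center computation, and then read off the full center by a PBW leading-term argument.

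Let $Z'$ be the subalgebra generated by $h_{r\ell},\Gamma^{\pm 1},k^{\pm 1},\Lambda^{\pm \ell}$. The inclusion $Z'\subseteq Z:=Z(\U_\epsilon(\hat{\mathfrak{gl}}_1))$ is immediate: $\Gamma^{\pm 1},k^{\pm 1}$ are central by definition; the right-hand side of $[h_m,h_{-m}]=\tfrac{1}{m}[m]_q\tfrac{\Gamma^{2m}-\Gamma^{-2m}}{q-q^{-1}}$ vanishes at $m=r\ell$, $q=\epsilon$ because $[r\ell]_\epsilon=0$, so $h_{r\ell}$ commutes with every $h_{m'}$; and $h_m\Lambda^{\pm\ell}=\epsilon^{\pm 2m\ell}\Lambda^{\pm\ell}h_m=\Lambda^{\pm\ell}h_m$ (in particular $h_{r\ell}\Lambda=\Lambda h_{r\ell}$) because $\epsilon^\ell=1$.

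For the reverse inclusion, write $H$ for the subalgebra generated by $\{h_m,\Gamma^{\pm 1},k^{\pm 1}\}$, so $\U_\epsilon(\hat{\mathfrak{gl}}_1)=H[\Lambda^{\pm 1};\sigma]$ is a skew Laurent ring with $\sigma=\mathrm{Ad}(\Lambda)$, $\sigma(h_m)=\epsilon^{-2m}h_m$, $\sigma|_{\Gamma,k}=\mathrm{id}$. Since $[h_m,h_{m'}]=0$ whenever $m+m'\neq 0$, PBW gives a tensor factorization $H\cong\bigotimes_{m>0}H_m$ over $R:=\CC[\Gamma^{\pm},k^{\pm}]$, where $H_m$ is the $R$-subalgebra generated by $h_{\pm m}$ modulo $[h_m,h_{-m}]=C_m$ with $C_m:=\tfrac{1}{m}[m]_\epsilon\tfrac{\Gamma^{2m}-\Gamma^{-2m}}{\epsilon-\epsilon^{-1}}\in R$. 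For $\ell\mid m$, $C_m=0$ and $H_m=R[h_m,h_{-m}]$ coincides with its own center. For $\ell\nmid m$, $C_m$ is a nonzero element of the domain $R$; tensoring with $\mathrm{Frac}(R)$ presents $H_m\otimes_R\mathrm{Frac}(R)$ as the usual characteristic-zero Weyl algebra, whose center is the ground field, so $Z(H_m)=R$. Combining over $m$ gives $Z(H)=R[h_{r\ell}:r\in\ZZ\setminus\{0\}]$.

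Finally, take $z\in Z$ and expand $z=\sum_a h_a\Lambda^a$ uniquely with $h_a\in H$. Commutation with $\Lambda$ forces $\sigma(h_a)=h_a$; commutation with each $h_m$, using $\Lambda^a h_m=\epsilon^{-2ma}h_m\Lambda^a$, produces the twisted relation $h_m h_a=\epsilon^{-2ma}h_a h_m$ in $H$. If $\ell\mid a$ every scalar is $1$, hence $h_a\in Z(H)\subseteq Z'$. If $\ell\nmid a$, pick $m_0>0$ coprime to $\ell$, so that $\epsilon^{-2m_0 a}\neq 1$. In the PBW basis of $H$ with ordering $h_{-M}<\cdots<h_{-1}<h_1<\cdots<h_M$, a direct calculation gives the normal forms
\begin{align*}
h_{m_0}h^{\vec n}&=h^{\vec n+\delta_{m_0}}+n_{-m_0}C_{m_0}h^{\vec n-\delta_{-m_0}},\\
h^{\vec n}h_{m_0}&=h^{\vec n+\delta_{m_0}}.
\end{align*}
Comparing coefficients of monomials with maximal $h_{m_0}$-exponent in $h_{m_0}h_a=\epsilon^{-2m_0 a}h_a h_{m_0}$ yields $(1-\epsilon^{-2m_0 a})c_I=0$ for every PBW coefficient $c_I$ realizing that maximum; since the scalar is nonzero, all such $c_I$ vanish, contradicting maximality unless $h_a$ has no $h_{m_0}$ at all. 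Iterating over every $m_0$ coprime to $\ell$, and (for composite $\ell$) handling the remaining indices through the resulting plain-commutator condition $[h_m,h_a]=0$, which via the same normal form forces $n_{-m}=0$ in every term, confines $h_a$ to the subalgebra generated by $R$ and the $h_{r\ell}$, i.e.\ to $Z(H)$. The original twisted relation with $m=1$ then reads $h_1 h_a=\epsilon^{-2a}h_1 h_a$ in the domain $H$ (whose domain property follows from writing $H$ as an iterated Ore extension of $R$), forcing $\epsilon^{-2a}=1$ and contradicting $\ell\nmid a$. Thus $h_a=0$ for $\ell\nmid a$, and $z\in R[h_{r\ell}][\Lambda^{\pm\ell}]=Z'$. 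The main subtlety is this last leading-term PBW step together with the two-pass cleanup needed when $\ell$ is not prime.
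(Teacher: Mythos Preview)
Your argument is correct and reaches the same conclusion as the paper, but by a genuinely different route. The paper proceeds in one stroke: it fixes the PBW basis $\prod_{m\notin\ell\ZZ}h_m^{s_m}\Lambda^t$ ($0\le t\le\ell-1$) of $\U_\epsilon(\hat{\mathfrak{gl}}_1)$ over $Z_\epsilon(\hat{\mathfrak{gl}}_1)$, takes a putative central element, looks at the lexicographically largest exponent sequence $(t,s_{-1},s_1,\ldots)$ occurring, and kills it by commuting with $h_1$ if $t>0$, or with $h_m$ for the first $-m$ with $s_{-m}>0$ otherwise. Your approach instead imposes structure first: you present the algebra as a skew Laurent extension $H[\Lambda^{\pm 1};\sigma]$, split $H$ as an $R$-tensor product of rank-one Weyl-type algebras $H_m$, and invoke the triviality of the Weyl-algebra center over a field to get $Z(H)$; only then do you run a PBW leading-term argument to control the $\Lambda$-grading. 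This buys a conceptual decomposition (each $H_m$ is handled uniformly) at the cost of length, whereas the paper's single lexicographic sweep is shorter but more ad hoc.

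One remark on your final paragraph: the detour through ``iterating over every $m_0$ coprime to $\ell$'' and the ``two-pass cleanup'' is unnecessary. With $m_0=1$ your own normal-form identities already give, for the \emph{top} $h_1$-degree $N$ (including $N=0$), the equation $(1-\epsilon^{-2a})c_{\vec n}=0$ for every $\vec n$ realizing that degree; since $\ell$ is odd and $\ell\nmid a$, this forces all such $c_{\vec n}=0$, hence $h_a=0$ outright. Your closing domain argument does rescue the conclusion, so nothing is wrong, but the intermediate claim ``unless $h_a$ has no $h_{m_0}$ at all'' undersells what the comparison actually yields.
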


\demo The above elements are clearly central. There is a PBW theorem so we get a basis $\prod_{m\in\mathbb Z \setminus \ell\ZZ}h_m^{s_m} \Lambda^t$,  $0\leq t\leq \ell-1$, $s_m$ a non negative integer, for $\mathcal{U}_{\epsilon}(\hat{\mathfrak{gl}}_1)$ over $Z_{\epsilon}(\hat{\mathfrak{gl}}_1)$. Then take a linear combination of these basis elements and take the lexicographically largest sequence $(t,s_{-1},s_{1}, \ldots ,
s_{1-\ell},s_{\ell-1},s_{-\ell-1},\ldots )$ such that the corresponding monomial appears in the linear combination with non zero coefficient. If $t>0$ it suffices to commute with $h_1$ to show that
the element cannot be in the center. Otherwise we commute with $h_m$ where $-m$ is the first index such that $s_{-m}>0$. Also in this case we are done. So we remain with the zero sequence which means we are in   $Z_{\epsilon}(\hat{\mathfrak{gl}}_1)$.\qed

Note that $\mathcal{U}_{\epsilon}(\hat{\mathfrak{gl}}_1)$ is not a finite module over its center. We denote by $Z_{\epsilon}$ the subalgebra of the center generated by the $h_{r\ell}$, $k^{\pm 1}$, $\Gamma^{\pm \ell}$, $\Lambda^{\pm \ell}$ ($r\in\ZZ-\{0\}$). The center is free of rank  $\ell$ over $Z_{\epsilon}$ and clearly $Z_{\epsilon}$ is a Hopf subalgebra of $\mathcal{U}_{\epsilon}(\hat{\mathfrak{gl}}_1)$.

As in \cite{dckp, r, bk}, we can define a Poisson structure on $Z_\epsilon(\hat{\mathfrak{gl}}_1)$. For $x,y\in Z_\epsilon(\hat{\mathfrak{gl}}_1)$, we define $\{x,y\} = [\tilde{x},\tilde{y}]/(\ell(q^\ell - q^{-\ell})) \text{Mod}(q - \epsilon)$ where $\tilde{x},\tilde{y}$ are respective representative of $x,y$ in $\U_q(\hat{\mathfrak{gl}}_1)$. Clearly $Z_{\epsilon}$ is a Poisson subalgebra and becomes a Poisson Hopf algebra.

In fact the specialization also makes sense for $\epsilon = 1$ : we get a commutative algebra $Z_1=\U_1(\hat{\Glie})$. $Z_1$ is a Poisson Hopf algebra as above (in this case $\ell = 1$).

By using a Frobenius isomorphism, we can see that the structure is independent on $\ell$ :

\begin{prop}\label{frgl} There is a Hopf Poisson algebra isomorphism $Fr : Z_1\rightarrow Z_\epsilon $ defined by
$$Fr((q -  q^{-1})h_m) = \ell(q - q^{-1})h_{m\ell}\text{ , }Fr(\Gamma) = \Gamma^{\ell}\text{ , }Fr(\Lambda) = \Lambda^{\ell}.$$
\end{prop}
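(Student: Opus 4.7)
The plan is to first establish that $Fr$ extends to a well-defined isomorphism of commutative algebras, and then to invoke Lemma \ref{isom} to promote it to an isomorphism of Poisson Hopf algebras by checking compatibility on the algebra generators. Both $Z_1$ and $Z_\epsilon$ are commutative Laurent polynomial algebras on their generating sets: in $Z_1$ the commutators vanish because $q - q^{-1} = 0$ at $q = 1$, and in $Z_\epsilon$ the generators $H_{r\ell}^{(\epsilon)} := (q-q^{-1})h_{r\ell} \bmod (q-\epsilon)$ are central by the preceding proposition. The prescribed map sends the free generating set of $Z_1$ bijectively onto the free generating set of $Z_\epsilon$ (rescaling $H_m^{(1)}$ by $\ell$ and raising $\Gamma, \Lambda$ to the $\ell$-th power, with $k \mapsto k$), so it extends uniquely to an algebra isomorphism.

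For the Hopf compatibility, by Lemma \ref{isom} I only need to check on generators, where everything is immediate. We have $\Delta(\Gamma) = \Gamma \otimes \Gamma$, which trivially survives the $\ell$-th power; similarly for $\Lambda$ and $k$. The interesting case is $\Delta((q-q^{-1})h_m) = (q-q^{-1})h_m \otimes \Gamma^{-|m|} + \Gamma^{|m|} \otimes (q-q^{-1})h_m$, but the scalar $\ell$ passes through both summands uniformly and $|m|\ell = |m\ell|$ since $\ell > 0$. The antipode and counit are equally straightforward.

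The substantive point is the Poisson compatibility. Since $\Gamma$ and $k$ are central, the only nontrivial brackets on generators of $Z_1$ are $\{H_m^{(1)}, H_{-m}^{(1)}\}_{Z_1}$ and $\{H_m^{(1)}, \Lambda\}_{Z_1}$. Using the defining relations and the limits $\lim_{q \to 1}[m]_q/m = 1$ and $\lim_{q \to 1}(q^{2m} - 1)/(q - q^{-1}) = m$, a short calculation gives $\{H_m^{(1)}, H_{-m}^{(1)}\}_{Z_1} = \Gamma^{2m} - \Gamma^{-2m}$ and $\{H_m^{(1)}, \Lambda\}_{Z_1} = m\Lambda H_m^{(1)}$. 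On the $Z_\epsilon$ side, the normalization is $\ell(q^\ell - q^{-\ell})$; substituting $u = q^\ell$ and applying l'Hopital at $u = 1$ gives the analogous limits $\lim_{q \to \epsilon}(q^{r\ell} - q^{-r\ell})/(q^\ell - q^{-\ell}) = r$ and $\lim_{q \to \epsilon}(q^{2m\ell^2} - 1)/(\ell(q^\ell - q^{-\ell})) = m$, yielding $\{H_{m\ell}^{(\epsilon)}, H_{-m\ell}^{(\epsilon)}\}_{Z_\epsilon} = \frac{1}{\ell^2}(\Gamma^{2m\ell} - \Gamma^{-2m\ell})$ and $\{H_{m\ell}^{(\epsilon)}, \Lambda^\ell\}_{Z_\epsilon} = m\Lambda^\ell H_{m\ell}^{(\epsilon)}$. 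The rescaling $Fr(H_m^{(1)}) = \ell H_{m\ell}^{(\epsilon)}$ multiplies the first bracket by $\ell^2$ and the second by $\ell$, recovering exactly $Fr$ applied to the corresponding $Z_1$-bracket.

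The main obstacle is precisely this bookkeeping of scaling factors across the two different Poisson normalizations: the factor $\ell$ in the normalization $\ell(q^\ell - q^{-\ell})$ on $Z_\epsilon$ is compensated by the factor $\ell$ in $Fr(H_m^{(1)}) = \ell H_{m\ell}^{(\epsilon)}$, which is exactly why the proposition's map is formulated with these specific rescaling constants.
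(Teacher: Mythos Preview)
Your proof is correct and follows essentially the same approach as the paper's own argument: both verify the Poisson compatibility by computing the two nontrivial brackets $\{h_m,h_{-m}\}$ and $\{h_m,\Lambda\}$ on each side and matching them via the rescaling, and both dispatch the Hopf compatibility by inspection on generators. Your presentation is slightly more explicit about the limiting arguments and the role of the scaling factor $\ell$, and you frame the reduction to generators via Lemma~\ref{isom}, but the substance is identical.
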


\demo The relation
$$[\ell(q - q^{-1})h_{m\ell},\ell (q-q^{-1})h_{-m'\ell}] = \ell(q - q^{-1})\delta_{m\ell,-m'\ell}\frac{1}{m}[m\ell]_q(\Gamma^{2m\ell} - \Gamma^{-2m\ell})$$
gives
$$\{\ell(q - q^{-1})h_{m\ell},\ell(q-q^{-1})h_{-m'\ell}\} = \delta_{m,-m'}\frac{1}{m}[m\ell]_q(Fr(\Gamma^{2m}) - Fr(\Gamma^{-2m}))$$
$$= \delta_{m,-m'}(Fr(\Gamma^{2m}) - Fr(\Gamma^{-2m})).$$
This coincides with the relation in $Z_1$ :
$$\{(q - q^{-1})h_m , (q - q^{-1})h_{-m'}\} = (\Gamma^{2m} - \Gamma^{-2m}) \delta_{m,-m'}.$$
The relation $h_{\ell m}\Lambda^\ell = q^{2m\ell^2} \Lambda^\ell h_{\ell m}$ gives
$$\{\ell(q - q^{-1})h_{m\ell},\Lambda^\ell\} = \frac{q^{2m\ell^2} - 1}{\ell(q^\ell - q^{-\ell})}\Lambda^\ell(\ell(q - q^{-1})h_{m\ell}) = m \Lambda^\ell(\ell(q - q^{-1})h_{m\ell}).$$
This coincides with the relation in $Z
_1$ :
$$\{(q - q^{-1})h_m, \Lambda\} = m\Lambda (q - q^{-1})h_m.$$
The Hopf algebra structure is clearly preserved by the Frobenius map.
\qed

\subsection{Definition}

Let $\Glie$ be a simple Lie algebra, $C=(C_{i,j})_{1\leq i,j\leq n}$ its Cartan matrix and $\Hlie\subset \Glie$ is a Cartan subalgebra. We set $I=\{1,\dots,n\}$. $C$ is symmetrizable, that is to say that there is a matrix $D=\text{diag}(r_1,\dots,r_n)$ ($r_i\in\NN^*$)\label{ri} such that $B=DC$\label{symcar} is symmetric.
$\Pi=\{\alpha_1,\dots,\alpha_n\}\subset \Hlie^*$ is set of the
simple roots.

We consider the quantum affine algebra associated to the affine Kac-Moody algebra $\hat{\Glie}$. We use the Drinfeld presentation of the algebra \cite{Dri2, b} :

\begin{defi} $\U_q(\hat{\Glie})$ is the $\CC(q)$-algebra with generators $h_{i,r}$ ($i\in I, r\in\ZZ-\{0\}$), $x_{i,m}^{\pm}$ ($i\in I, m\in\ZZ$), $k_i^{\pm 1}$, $\Lambda^{\pm 1}$, central elements $\Gamma^{\pm 1}$, and relations :
$$[h_{i,m},h_{j,-m'}] = \delta_{m,-m'}\frac{1}{m}[mB_{i,j}]_q \frac{\Gamma^{2m} - \Gamma^{-2m}}{q - q^{-1}},$$
$$h_m\Lambda = q^{2m} \Lambda h_m\text{ , }x_m^\pm \Lambda = q^{2m}\Lambda x_m^\pm \text{ , }[k_i,h_m] =[\Lambda,k_i]=[\Lambda,h_{i,m}]= 0,$$
$$k_ix_{j,m}^{\pm}k_i^{-1}= q^{\pm B_{i,j}}x_{j,m}^{\pm},$$
$$[h_{i,m},x_{j,m'}^{\pm}]=\pm \frac{1}{m}[mB_{i,j}]_q \Gamma^{\mp\mid m\mid} x_{j,m+m'}^{\pm},$$
$$x_{i,m+1}^{\pm}x_{j,m'}^{\pm}- q^{\pm B_{i,j}}x_{j,m'}^{\pm}x_{i,m+1}^{\pm}= q^{\pm B_{i,j}}x_{i,m}^{\pm}x_{j,m'+1}^{\pm}-x_{j,m'+1}^{\pm}x_{i,m}^{\pm},$$
$$[x_{i,m}^+,x_{j,m'}^-]= \delta_{i,j}\frac{\Gamma^{m-m'}\phi^+_{i,m+m'}- \Gamma^{m' - m}\phi^-_{i,m+m'}}{q^{r_i} - q^{-r_i}},$$
$$\underset{\pi\in\Sigma_s}{\sum}\underset{k=0\cdots s}{\sum}(-1)^k\begin{bmatrix}s\\k\end{bmatrix}_{q^{r_i}}x_{i,r_{\pi(1)}}^{\pm}\cdots x_{i,r_{\pi(k)}}^{\pm}x_{j,r'}^{\pm}x_{i,r_{\pi(k+1)}}^{\pm}\cdots x_{i,r_{\pi(s)}}^{\pm}=0,$$
where the last relation holds for all $i\neq j$, $s=1-C_{i,j}$, all sequences of integers $r_1,\cdots ,r_s$. $\Sigma_s$ is
the symmetric group on $s$ letters. The $\phi_{i,\pm m}^{\pm}\in \U_q(\hat{\Glie})$ are defined for
$m\geq 0$ by the formal power series
$$\phi_i^{\pm}(u)=\underset{m\geq 0}{\sum}\phi_{i,\pm m}^{\pm}u^{\pm m}=k_i^{\pm 1} \text{exp}(\pm \underset{m'\geq 1}{\sum}(q - q^{-1})h_{i,\pm m'}u^{\pm m'}),$$
and we set $\phi_{\pm m}^{\pm}=0$ for $m<0$.
\end{defi}
The algebra $\U_q(\hat{\Glie})$ has a Hopf algebra structure which is defined in terms of the Drinfeld-Jimbo generators (see \cite{cp1}).

Let $\epsilon$ be a primitive $l$-root of unity ($l\geq 3$ is odd and prime with the $r_j$). Let us consider the specialization $\U_\epsilon(\hat{\Glie})$ of $\U_q(\hat{\Glie})$ at $q = \epsilon$ : we consider the $\CC[q^{\pm 1}]$-subalgebra generated by the $(q - q^{-1})h_m$, $(q^{r_i} - q^{-r_i})x_{i,m}^\pm$, $\Lambda^{\pm 1}$, $\Gamma^m$ and we quotient by the ideal generated by $(q - \epsilon)$.

\subsection{The center and its Poisson structure}

Consider the set of real roots of the affine Lie algebra $\tilde{\Delta}^{re} = \Delta + \ZZ\delta$ where $\Delta$ is the set of roots of $\Glie$ and $\delta$ generates the imaginary roots. In \cite{b, da0} root vectors $x_\beta\in\U_q(\hat{\Glie})$ ($\beta\in \tilde{\Delta}^{re}$) are considered. This includes the $x_{i,m}^\pm = x_{\pm\alpha_i + m\delta}$.

Under some additional condition (see \cite[Corollary 2.1]{bk}), the center of $\mathcal{U}_{\epsilon}(\hat{\Glie})$ was calculated in \cite{da} and \cite[Proposition 2.3, Remark 2.3]{bk} :

\begin{thm}\label{bkcenter} $Z_\epsilon(\hat{\Glie})$ is generated by the $(x_\beta)^{\ell}$, $h_{r\ell}$, $k_i^{\pm \ell}$, $\Gamma^{\pm 1}$, $\Lambda^{\pm \ell}$ ($\beta\in \tilde{\Delta}^{re}, r\in\ZZ-\{0\}$).\end{thm}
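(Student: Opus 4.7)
The plan is to adapt the argument given just above for $\U_\epsilon(\hat{\mathfrak{gl}}_1)$ to the general case, relying on the PBW-type theorem for quantum affine algebras in Drinfeld realization (due to Beck and Damiani). I would proceed in three steps.

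First, I would verify that every element in the proposed list is actually central. The statements for $\Gamma^{\pm 1}$ and $\Lambda^{\pm \ell}$ are immediate from the defining relations, and for $k_i^{\pm \ell}$ they follow since the scalars $q^{\pm \ell B_{i,j}}$ specialize to $1$ at $q=\epsilon$ (using $\gcd(\ell,r_j)=1$). For $h_{r\ell}$ the centrality of the commutator with any $h_{j,-m'}$, $x_{j,m'}^{\pm}$ and $\phi_{j,m}^{\pm}$ comes from the factor $[r\ell B_{i,j}]_q$ (respectively $[r\ell B_{i,j}]_q$ in the bracket with $x^\pm$), which vanishes modulo $(q-\epsilon)$. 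The delicate case is the centrality of $(x_\beta)^{\ell}$ for $\beta \in \tilde{\Delta}^{re}$; this is the quantum-affine analogue of the classical De Concini--Kac--Procesi result at roots of unity and follows from the $q$-binomial identity
\begin{equation*}
x^{\ell}y = \sum_{k=0}^{\ell} \begin{bmatrix}\ell\\k\end{bmatrix}_{q^{r_i}} (\mathrm{ad}\,x)^{k}(y)\, x^{\ell-k}
\end{equation*}
applied to the commutation between a root vector and each Drinfeld generator, using that $\begin{bmatrix}\ell\\k\end{bmatrix}_{\epsilon}=0$ for $0<k<\ell$ and invoking Levendorskii--Soibelman type straightening relations between real and imaginary root vectors (as developed in \cite{b, da, da0}).

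Second, I would invoke Beck's PBW theorem: fixing a convex order on $\tilde{\Delta}^{re}$, ordered monomials
\begin{equation*}
\Bigl(\prod_{\beta\in\tilde{\Delta}^{re}} x_{\beta}^{s_\beta}\Bigr)\Bigl(\prod_{i,m>0} h_{i,-m}^{t_{i,-m}} h_{i,m}^{t_{i,m}}\Bigr)\Bigl(\prod_i k_i^{a_i}\Bigr)\Lambda^{b}\Gamma^{c}
\end{equation*}
form a basis of $\U_\epsilon(\hat{\Glie})$. In particular, writing $\ell$-adically each exponent as $s_\beta = \ell s'_\beta + s''_\beta$, $t_{i,m}=\ell t'_{i,m}+t''_{i,m}$, $a_i = \ell a'_i + a''_i$, $b=\ell b'+b''$ with small residues $s''_\beta, t''_{i,m}\in\{0,\dots,\ell-1\}$, $a''_i,b''\in\{0,\dots,\ell-1\}$, the algebra $\U_\epsilon(\hat{\Glie})$ is a free module over the subalgebra $A$ generated by the elements of the theorem, with basis the monomials whose primed exponents vanish.

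Third, I would copy verbatim the lexicographic trick used for $\U_\epsilon(\hat{\mathfrak{gl}}_1)$. Given a central element $z$, expand it as an $A$-linear combination of the basis monomials described above and pick the lexicographically largest residue sequence $(b'',a''_i,t''_{i,\pm m},s''_\beta)$ appearing with nonzero coefficient. If any $s''_\beta>0$ for a positive real root $\beta$, commute with a suitably chosen $h_{i,-1}$ or with the lowest root vector $x_{\gamma}^{-}$ to obtain a contradiction with centrality; if any $t''_{i,m}>0$, commute with a $x_{j,-m}^\pm$ chosen so that $B_{i,j}m\not\equiv 0 \pmod\ell$; if $b''>0$, commute with $h_{i,1}$; if $a''_i>0$, commute with $x_{i,0}^+$. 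In each case one strictly increases the leading residue sequence, forcing it to be zero. The last step then identifies $z$ as an element of $A$.

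The main obstacle is the first step, specifically the centrality of the $(x_\beta)^{\ell}$ for imaginary-direction real roots: it requires the full strength of the Beck--Damiani quantum affine straightening formulas to reduce the commutator $[(x_\beta)^{\ell}, y]$ to a sum of products involving quantum binomial coefficients that vanish at $\epsilon$. Once this is granted, the PBW $+$ lex argument is structurally identical to the Heisenberg case treated above.
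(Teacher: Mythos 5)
Note first that the paper itself does not prove this statement: Theorem \ref{bkcenter} is quoted from Damiani \cite{da} and Beck--Kac \cite[Proposition 2.3, Remark 2.3]{bk}, and the sentence introducing it stresses that it holds only \emph{under an additional condition} (\cite[Corollary 2.1]{bk}). Your sketch claims the result unconditionally for every odd $\ell\geq 3$ prime to the $r_i$, and that alone signals a gap, because the unconditional statement fails. The point your third step glosses over is the nondegeneracy of the pairing on the imaginary-root (Heisenberg) part: a central element is a linear combination of PBW monomials, and the commutators coming from different monomials in the same degree-$m$ block can cancel, so choosing a single index $j$ with $mB_{ij}\not\equiv 0 \pmod \ell$ is not enough; one needs the whole matrix $([mB_{ij}]_\epsilon)_{1\leq i,j\leq n}$ to be invertible for every $m\not\equiv 0\pmod\ell$. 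This can fail: in type $A_2$ with $\ell=9$ and $m=3$ one has $[6]_\epsilon=-[3]_\epsilon$, so $(1,-1)$ lies in the kernel, the elements $u=h_{1,3}-h_{2,3}$ and $v=h_{1,-3}-h_{2,-3}$ commute with all $x^{\pm}_{j,m'}$, all $h_{j,m'}$ and all $k_j$, and the degree-zero product $uv$ (which also commutes with $\Lambda$, since the $\Lambda$-weights of $u$ and $v$ cancel) is central but does not lie in the subalgebra generated by the listed elements, by the PBW basis. Ruling out exactly this degeneration is the content of Damiani's computation of $\det H_\eta$ and of the extra hypothesis of \cite[Corollary 2.1]{bk}. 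In the $\hat{\mathfrak{gl}}_1$ case the corresponding matrix is the $1\times 1$ scalar $\frac{1}{m}[m]_\epsilon(\Gamma^{2m}-\Gamma^{-2m})/(\epsilon-\epsilon^{-1})$, which is never a zero divisor; that is why the lexicographic trick works there, and the general case is not ``structurally identical'' to it.

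Your first step also leans on machinery that does not work as stated. The binomial identity you write uses the ordinary adjoint action with no weight-dependent powers of $q$, and even in its correct braided form it only yields centrality of $(x_\beta)^{\ell}$ when $\mathrm{ad}_q(x_\beta)$ acts nilpotently on the generator being commuted past; this is not available, for instance, for $x^{+}_{i,m}$ against $x^{+}_{i,m'}$ or against the imaginary root vectors, so invoking ``Levendorskii--Soibelman type straightening'' at this point is an appeal to the very results of \cite{b, da, bk} that one is trying to prove. The route actually used there is different and cleaner: Beck's extended affine braid group action realizes each real root vector as $x_\beta=T_w(E_i)$, hence $(x_\beta)^{\ell}=T_w(E_i^{\ell})$, and centrality reduces to the elementary computation, as in finite type \cite{dck, dcp, dckp}, that the $\ell$-th powers of the affine Chevalley generators are central. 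Likewise, your treatment of real-root residues $s''_\beta>0$ by ``commuting with a suitably chosen $h_{i,-1}$'' requires re-straightening the result into the PBW basis and controlling the leading term (in finite type this is done through the associated quasi-polynomial graded algebra), and you offer no substitute for that step. In short, the plan outsources precisely the two hard points --- centrality of the $(x_\beta)^{\ell}$ and nondegeneracy of the imaginary pairing --- that constitute the cited theorem, and as written it would prove a statement that is false without the additional condition the paper explicitly mentions.
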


Remarks :

- $Z_\epsilon(\hat{sl_2})$ is generated by $(x_{i,r}^\pm)^{\ell}$, $h_{r\ell}$, $k_i^{\pm \ell}$, $\Gamma^{\pm 1}$, $\Lambda^{\pm \ell}$ ($r\in\ZZ-\{0\}$, $m\in\ZZ$).

- If $E_i$, $F_i$, $K_i^{\pm 1}$ ($0\leq i\leq n$) denote the Drinfeld-Jimbo generators of $\U_\epsilon(\hat{\Glie})$  which give the Chevalley presentation of $\U_\epsilon(\hat{\Glie})$, then $E_i^\ell$, $F_i^\ell$, $K_i^\ell$ are in the center.

- $\mathcal{U}_{\epsilon}(\hat{\Glie})$ is not of finite rank over its center.

\noindent As for $\hat{\mathfrak{gl}}_1$, we define $Z_\epsilon$ as the subalgebra of the center with the same generators except $\Gamma^{\pm 2\ell}$ instead of $\Gamma^{\pm 1}$ and we have a Poisson structure on $Z_\epsilon(\hat{\Glie})$ such that $Z_{\epsilon}$ is a Poisson subalgebra of $Z_\epsilon(\hat{\Glie})$  and a sub Hopf algebra of $\U_\epsilon(\hat{\Glie})$.

\noindent The specialization also makes sense for $\epsilon = 1$ and we get a commutative Poisson Hopf algebra $Z_1:=\U_1(\hat{\Glie})$. We have
\begin{prop}\label{frg} There is a Hopf Poisson algebra isomorphism $Fr : Z_1\rightarrow Z_\epsilon$ defined by $$Fr((q^{r_i} - q^{-r_i})x_{i,m}^\pm) = (x_{i,m}^{\pm})^\ell\text{ , }Fr((q -  q^{-1})h_{i,m}) = \ell(q - q^{-1})h_{i,m\ell},$$
$$Fr(\Gamma) = \Gamma^{\ell}\text{ , }Fr(\Lambda) = \Lambda^{\ell}.$$
\end{prop}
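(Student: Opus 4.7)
The plan is to mirror the proof of Proposition \ref{frgl} and invoke Lemma \ref{isom} with $\Psi=Fr$, $R=Z_1$, $R'=Z_\epsilon$. As generating set for $Z_1$ as a Poisson algebra I would take the semi-classical images of $(q-q^{-1})h_{i,m}$, $(q^{r_i}-q^{-r_i})x_{i,m}^\pm$, $\Gamma$, $\Lambda$, and $k_i$. Under this choice the hypotheses of Lemma \ref{isom} reduce, on these generators, to three verifications: (i) $Fr$ is a well-defined ring isomorphism, (ii) it intertwines the coproduct and antipode, and (iii) it intertwines the Poisson bracket.

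For (i), I would lift $Fr$ to a map of representatives inside the $\CC[q^{\pm 1}]$-integral form of $\U_q(\hat\Glie)$, and check that the listed images satisfy the classical $Z_1$-relations after reduction modulo $(q-\epsilon)$. The $h$-$h$ and $h$-$\Lambda$ relations are copies of those in Proposition \ref{frgl}. The relations involving $(x_{i,m}^\pm)^\ell$ are the content of the affine quantum Frobenius computation of \cite{bk, da}: using the standard identity $[a^\ell,b]=\sum_{k=0}^{\ell-1}a^k[a,b]a^{\ell-1-k}$ combined with the vanishing at $\epsilon$ of the $q$-binomial coefficients $\begin{bmatrix}\ell\\k\end{bmatrix}_{q^{r_i}}$ for $0<k<\ell$ (which is where the hypotheses that $\ell$ is odd and coprime to the $r_j$ enter), all cross-terms vanish and one recovers the commutative $\epsilon=1$ Drinfeld relations. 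Bijectivity then follows from Theorem \ref{bkcenter} together with a PBW rank count. Point (ii) is handled on the generators $h,\Gamma,\Lambda,k$ exactly as in Proposition \ref{frgl}; for the new generators the same $q$-binomial vanishing reduces $\Delta((x_{i,m}^\pm)^\ell)$ modulo $(q-\epsilon)$ to the primitive-type formula matching $\Delta$ in $Z_1$, and likewise for $S$.

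The core of the work is (iii), the Poisson compatibility, using $\{x,y\}=[\tilde x,\tilde y]/(\ell(q^\ell-q^{-\ell}))\bmod(q-\epsilon)$. The brackets among $h$'s, $\Lambda$, $\Gamma$, $k_i$ are identical to those of Proposition \ref{frgl}. The brackets $\{(x_{i,m}^\pm)^\ell,h_{j,r\ell}\}$, $\{(x_{i,m}^\pm)^\ell,\Lambda^\ell\}$ and $\{(x_{i,m}^\pm)^\ell,(x_{j,m'}^\pm)^\ell\}$ reduce to coefficient tracking via the $\ell$-fold commutator expansion and the quantum Serre relations. The main obstacle is the cross bracket $\{(x_{i,m}^+)^\ell,(x_{j,m'}^-)^\ell\}$: one must iterate the Drinfeld relation
$$[x_{i,m}^+,x_{j,m'}^-]=\delta_{i,j}\frac{\Gamma^{m-m'}\phi^+_{i,m+m'}-\Gamma^{m'-m}\phi^-_{i,m+m'}}{q^{r_i}-q^{-r_i}}$$
to pass to $\ell$-th powers, expand the exponentials $\phi^\pm_i$ in the $h$-generators, and then carefully identify which contributions survive after division by $\ell(q^\ell-q^{-\ell})$ and reduction modulo $(q-\epsilon)$. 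The surviving leading term should reproduce the classical Poisson bracket dictated by the $\epsilon=1$ Drinfeld relation in $Z_1$. Once (i)--(iii) are in hand, Lemma \ref{isom} yields the desired Hopf Poisson isomorphism.
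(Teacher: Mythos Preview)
Your Poisson verification outline is reasonable in spirit, but the paper takes a shortcut you should be aware of: rather than redoing the $\ell$-fold commutator bookkeeping from scratch, it invokes the $\Gamma=1$ case already established in \cite{bk} and then absorbs the central charge by the substitution $x_{j,M}^\pm\mapsto\Gamma^{\mp M}x_{j,M}^\pm$ (and $\Gamma^{\mp 2M}$ for the $x^+$--$x^-$ relation). This rescaling leaves the $x^+$--$x^+$ and $x^-$--$x^-$ relations untouched and reduces the $h$--$x$ and $x^+$--$x^-$ relations to their $\Gamma=1$ form, so the hardest bracket $\{(x_{i,m}^+)^\ell,(x_{j,m'}^-)^\ell\}$ never has to be computed by hand. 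Your direct route is not wrong, but it is substantially longer and you have not indicated how the iterated expansion of $[x^{+\,\ell},x^{-\,\ell}]$ in terms of $\phi^\pm$ actually closes up.

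There is, however, a genuine gap in step (ii). The Hopf structure on $\U_q(\hat\Glie)$ is defined via the Drinfeld--Jimbo presentation, and the coproduct of the Drinfeld loop generators $x_{i,m}^\pm$ for $m\neq 0$ is \emph{not} primitive-type: already for $\hat{\mathfrak{sl}}_2$ the formulas for $\Delta(x^{\pm,\pm}(z))$ in Section~\ref{sldeux} involve rational expressions in the currents. So the claim that $q$-binomial vanishing collapses $\Delta((x_{i,m}^\pm)^\ell)$ to a twisted-primitive formula is unfounded, and without it Lemma~\ref{isom} cannot be applied with your chosen generating set. The paper avoids this by switching generating sets for the Hopf check: $Z_1$ is equally generated as a Poisson algebra by the Chevalley elements $E_i,F_i,K_i^{\pm 1},\Lambda^{\pm 1}$ ($0\le i\le n$), on which the coproduct and antipode are genuinely primitive-type, and $Fr$ visibly sends $E_i\mapsto E_i^\ell$, $F_i\mapsto F_i^\ell$, $K_i\mapsto K_i^\ell$. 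You should make the same switch.
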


\demo When $\Gamma = 1$, the result is proved in \cite[Section 3.2]{bk} by using results from \cite{l2} for the subalgebra without $\Lambda$. We work with the Drinfeld generators of the Poisson algebra. We can check as for $\hat{\mathfrak{gl}}_1$ that the result is preserved with the additional generator $\Lambda$. With the $\Gamma$ it is a little bit more complicated as it appears in various relations between generators. The relations between the $h_{i,r}$, which are trivial in the $\Gamma = 1$ case, give rise to quantum Heisenberg algebras and so can be treated as for $\hat{\mathfrak{gl}}_1$. The relations between the $x_{i,m}^+$ (resp. $x_{i,m}^-$) are not deformed.

For the relations between the $h_{i,m}$ and the $x_{j,m'}^\pm$, if $m > 0$ it suffices to replace $x_{j,M}^\pm$ by $\Gamma^{\mp M}x_{j,M}^\pm$, that is to say :
$$[h_{i,m},(x_{j,m'}^\pm \Gamma^{\mp m'})] = \pm \frac{1}{m}[mB_{i,j}]_\epsilon (\Gamma^{\mp (m + m')}x_{j,m + m'}^\pm).$$
Then we can use the result with trivial central charge as the relation between the $x_{j,m}^+$ is unchanged by the replacement. The case $m < 0$ is treated in the same way by using $\Gamma^{\pm M}x_{j,M}^\pm$ instead of $x_{j,M}^\pm$.

For the relations between the $x_{i,m}^+$ and the $x_{i,m'}^-$, we study the case $m+m'\geq 0$ (the case $m+m'\leq 0$ is treated in an similar way). Then we have to use relations of the form
$$[x_{i,M}^+,x_{i,M'}^-] = \Gamma^{M - M'}\phi_{i,M + M'}^+/(q_i - q_i^{-1})\text{ , }[h_{i,M},x_{i,M'}^\pm] = \pm[2M]_q\Gamma^{\mp M}x_{i,M + M'}^\pm /M.$$
So it suffices to replace as above $x_{i,m}^\pm$ by $\Gamma^{\mp 2}x_{i,m}^\pm$ and to use the result with trivial central charge.

As for the case $\Gamma = 1$, the Hopf algebra structure is clearly preserved on the Drinfeld-Jimbo generators by the Frobenius map.
\qed

As a Poisson algebra, $Z_1$ is generated by the Chevalley generators $E_i$, $F_i$, $K_i^{\pm 1}$, $\Lambda^{\pm 1}$ ($0\leq i\leq n$) as well as  by the Drinfeld generators $x_{i,m}^\pm$, $k_i^{\pm 1}$, $\Gamma^{\pm}$, $\Lambda^{\pm}$. Since  the isomorphism $Fr$ maps the elements $E_i$, $F_i$, $K_i^{\pm 1}$, $\Lambda^{\pm 1}$ to the elements $E_i^\ell$, $F_i^\ell$, $K_i^\ell$, $\Lambda^{\pm \ell}$ ($0\leq i\leq n$), it follows that these elements generate   $Z_\epsilon$ as a Poisson algebra.

\section{Analytic loop groups and their extensions}\label{defian}

\subsection{Loop groups}
Let $G$ be  a finite dimensional complex algebraic group. We want to define a variant of the notion of the loop group $\hat {G}$ that is something which should be "morally" a group of maps $S_1\to G$.

Given a positive real number $R$ we shall denote by $\mathcal D_R^*$ the pointed disk of radius $R$ i.e.
$$\mathcal D_R^*=\{z\in\mathbb C|0<|z|\leq R\}.$$

Take  a finite dimensional  complex manifold $M$. Consider  pairs $(R,f)$ where
$f: \mathcal D_R^*\to M$  is a continuous map holomorphic in the interior of $\mathcal D_R^*$. Two such pairs  $(R,f)$ and $(R',f')$  are said to be equivalent if there exists    $R''\leq$  min $(R,R')$  such that  $f_{|\mathcal{D}_{R''}^*} = f'_{|\mathcal{D}_{R''}^*}$. An equivalence class is called a germ of a holomorphic map on the punctured disc with values in $M$. In the following we will just write $f$ for $(R,f)$  and denote by $LM$ the set of germs of   holomorphic maps on the punctured disc with values in $M$.

When the target manifold is the field of complex numbers $\mathbb C$ we set  $\mathcal L:=L{ \mathbb C}$ and we have

\begin{lem}\label{dlie}\cite{adc} There is a canonical decomposition :
$$\mathcal {L} = \mathcal{L}^+\oplus \mathcal{L}^-$$
where $\mathcal L^+$ is the space of germs of holomorphic functions around zero and $\mathcal L^-$ the space of holomorphic functions on $\mathbb P^1\setminus\{0\}$ such that $f(\infty)=0$.
\end{lem}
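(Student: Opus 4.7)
The plan is to extract the decomposition from the Laurent series expansion at the origin. Given a germ $f\in\mathfrak{L}$, represented by a continuous map $f\colon \mathcal{D}_R^*\to\mathbb{C}$ holomorphic on the interior $\{0<|z|<R\}$, the standard Laurent expansion yields
$$f(z)=\sum_{n\in\mathbb{Z}}a_n z^n,$$
converging on this punctured disc. I split it as $f=f^++f^-$ with
$$f^+(z)=\sum_{n\geq 0}a_n z^n,\qquad f^-(z)=\sum_{n<0}a_n z^n.$$

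The positive part $f^+$ has radius of convergence at least $R$, so extends holomorphically to the open disc $|z|<R$ and defines an element of $\mathfrak{L}^+$. For the negative part, the fact that the inner radius of the annulus $0<|z|<R$ is zero means, via the change of variable $w=1/z$, that $\sum_{m>0}a_{-m}w^m$ has infinite radius of convergence. Hence $f^-$ converges on all of $\mathbb{C}^*$, extends holomorphically to $\infty$ with $f^-(\infty)=0$, and thus yields an element of $\mathfrak{L}^-$ vanishing at $\infty$. This produces the surjection $\mathfrak{L}^+\oplus\{f\in\mathfrak{L}^-\mid f(\infty)=0\}\twoheadrightarrow\mathfrak{L}$.

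For injectivity, suppose $g^++g^-=0$ as germs, with $g^+\in\mathfrak{L}^+$ and $g^-\in\mathfrak{L}^-$ satisfying $g^-(\infty)=0$. Then $g^+$ and $-g^-$ coincide on some punctured neighborhood of $0$, so $g^+$ furnishes a holomorphic extension of $-g^-$ across $0$. Glueing, one obtains a function holomorphic on all of $\mathbb{P}^1(\mathbb{C})$; by Liouville's theorem it is constant, and since it vanishes at $\infty$ it is identically $0$, forcing $g^+=g^-=0$. The argument is essentially a textbook application of the Laurent decomposition combined with Liouville; there is no serious obstacle. The only point worth keeping in mind is that germs on the punctured disc may have an essential singularity at $0$, so the negative part need not truncate, but it still extends holomorphically over all of $\mathbb{C}^*$ precisely because the inner radius of convergence is zero.
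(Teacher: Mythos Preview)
Your argument is correct and is the standard Laurent-series proof; the only minor remark is that the germ need only be represented on an open punctured disc, but this does not affect anything you wrote. The paper itself gives no proof of this lemma, merely citing \cite{adc}, and your approach is exactly the one underlying that reference.
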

A topology on $\mathcal{L}$ is defined in \cite[Section 1]{adc} as the product topology of a Fr\'echet topology on $\mathcal{L}^-$ and a topology on $\mathcal{L}^+$ seen as a direct limit as $R$ goes to zero of the Banach spaces of holomorphic functions on $\mathcal{D}^R$. We get a locally convex topological vector space.
 In power series we can write $f\in \mathcal L$ as $f=\sum_{n\in\mathbb Z}f_nz^n$,
$f_n\in\mathbb C$ and $f^+=\sum_{n\geq 0}f_nz^n$, $f^-=f-f^+$.

Using this, we see that if $U\subset \mathbb C^n$ is an open set, then $LU$ is an open set in $\mathcal L^n$.  It is then not hard to see that
$LM$ has the structure of an infinite dimensional complex manifold based on $\mathcal L$.
When $G$ is a complex Lie group, we can give $LG$ the structure of an infinite dimensional complex Lie Group by defining charts as follows (see \cite{ps}).  Take a open neighborhood $A$ of the identity element  in $G$ which is homeomorphic by the exponential  map to a open neighborhood $\tilde A$ of $0$ in the Lie algebra  $\mathfrak g$ of $G$. Then we can identify $LA$ with $L\tilde A$ via the exponential map. At this point for any   $g\in LG$ we can translate $LA$ using right multiplication by $g$ to the   neighborhood $LA_g:=LA\cdot g$. The collection $\{LA_g\}$ gives the desired atlas for $LG$.
\begin{defi} The infinite dimensional complex Lie group $LG$ is the analytic loop group of the group $G$.
\end{defi}

From this it is then immediate that the Lie algebra $\mathfrak{L g}$ of $LG$ coincides with $L\mathfrak g$ with the obvious Lie algebra structure. We define as above $\mathfrak{L g}^\pm\subset \mathfrak{L g}$.

There are various reasons to choose this definition. Let us list the three which we consider relevant for this paper.

1) The group of rational functions $G(z)$ that is the group of $\mathbb C(z)$-rational points embeds in $LG$. This is clear since  any   $G$ valued rational map $\tilde g$ is defined outside a finite subset in $\mathbb C$. Hence can be restricted to a map
$\mathcal D_R^*\to G$ for a small enough $R$. By a similar argument $LG$ also contains the group of $G$ valued holomorphic maps on $\mathbb C^*$.

2)   $ \mathbb C^*$ acts  on $LG$ by rescaling. Indeed take $g\in LG$ and fix a representative $(R,f)$ for $g$. Given $\gamma\in\mathbb C^*$ define $f^{\gamma}:\mathcal D_{R/\gamma}\to G$ by $f^{\gamma}(z)=f(\gamma z)$. The equivalence class of $(R/\gamma,f^\gamma)$ depends only on $g$ and defines and element $g^\gamma\in LG$.

3) $LG$ naturally contains the subgroup $LG^+$ consisting of germs of $G$ valued holomorphic map around zero and the subgroup $LG^-$ consisting of $G$ valued holomorphic functions on $\mathbb{P}^1\setminus \{0\}$ whose Lie algebras are $\mathfrak {Lg}^+$ and $\mathfrak {Lg}^-$ respectively.

In order to justify our definition we will briefly discuss different versions of loop groups studied in the literature and we are going to explain some of their properties and point out some of their drawbacks with respect the purposes of the present paper.

\subsubsection{Formal loop group}

The formal loop group $G((z))$ consists of invertible complex valued formal Laurent series in $z$.

Formal loop groups are of fundamental importance, as for example in the geometric Langlands program \cite{f2}. However, they do not work for our purposes for several reasons. The main reason is that although at the Lie algebra level formal power Laurent series
$\Glie((z))$ factorize into $z^{-1}\Glie[z^{-1}]\oplus \Glie[[z]]$, there is no such factorization at the level of Lie groups so there is no analog of property 3).

\subsubsection{Smooth Loop group}

The smooth loop group $L(G)$ consists of smooth maps $S_1\rightarrow G$. $L(G)$ is considered in \cite{ps} (we refer to this book as well as to \cite{fa} for details about its structure of infinite dimensional Lie group).

 There is an action by rescaling of $S_1$ on $L$ and so the change of variable makes sense for $|\gamma| = 1$, but for general $\gamma\in\CC^*$ this makes no sense. So Property 2) is not satisfied.

However let us explain some properties of $L(G)$ which will be useful in the following.

Notice that an analogue of property 3) is satisfied taking as
 $ L^{\pm}$   the subgroups of those  $f\in L$ which are the boundary value of an holomorphic map $\mathcal{D}^{\pm}\rightarrow G$. For $f\in L^{\pm}$ and $z\in \mathcal{D}^{\pm}$ we still denote the extension of $f$ to $\mathcal{D}^{\pm}$ by $f(z)$.

Theorem \ref{birk}, gives a Riemann-Hilbert factorization of $f\in L$ in the form $f = f_+ \lambda f_-$ where $f_{\pm}\in L^\pm$.

\subsubsection{Holomorphic loop group}

The holomorphic loop group $G_{hol}$ consists of  holomorphic maps $\CC^*\rightarrow G$.

The holomorphic loop group does not have the drawbacks mentioned above but on the other hand  does not contain rational maps i.e. it does not satisfy our condition 1). Since rational maps are known to have particular importance in the representation theory of quantum affine algebras (see the discussions in \cite{bk} in the root of unity case, but also from the results in the restricted cases in \cite{cp} involving Drinfeld polynomials)  it is important to consider our larger group $LG$ in what follows.

\subsubsection{Riemann-Hilbert factorization}
Let $G$ be a reductive complex algebraic group.  $\Glie$ be its Lie algebra. $D$  a  maximal torus in $G$, $B^+$, $B^-$  opposite Borel subgroups with respect to a choice of positive roots so that  $D = B^+\cap B^-$. $U^\pm\subset B^\pm$ the corresponding unipotent subgroups. We set $\mathfrak{d} = \text{Lie}(D)$, $\mathfrak{b}_\pm = \text{Lie}(B^\pm)$, $\mathfrak{n}_\pm = \text{Lie}(U^\pm)$.

Passing to $LG$ we obtain inclusions of $LD$, $LB^\pm$, $LU^\pm$ into $LG$.

Let us now prove a Riemann-Hilbert factorization for $LG$.

\begin{thm}\label{decompa} An element $f\in LG$ can be written in the form $f = f_+ \lambda f_-$ where $f_{\pm}\in {LG}^\pm$, $f_-(\infty) = 1$ and $\lambda$ is a one parameter subgroup of $D$, that is the germ of a homomorphism $\lambda:\mathbb C^*\to D$.
\end{thm}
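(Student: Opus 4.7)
My plan is to reduce to the Birkhoff factorization of Theorem~\ref{birk} by evaluating a representative of $f$ on a small circle in its domain of holomorphy, and then to promote the resulting factors to honest elements of $LG^\pm$ using the analyticity of $f$.

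In detail, fix a representative $f\colon\mathcal D_R^*\to G$ of the germ and choose $0<r<R$. The rescaled map $g(w):=f(rw)$ is holomorphic on an open neighbourhood of $S_1$. Applying Theorem~\ref{birk} to $g|_{S_1}$ (and, if needed, to $g^{-1}$ in order to match the order of the factors) yields $g(w)=g_+(w)\lambda(w)g_-(w)$ on $S_1$, with $g_\pm$ holomorphic in the interior of $\mathcal D^\pm$ and $\lambda=\lambda_{(a_1,\dots,a_n)}$ a homomorphism $S_1\to D$. Since $g_-(\infty)\in D$ commutes with $\lambda$, the substitution $(g_+,g_-)\mapsto(g_+\,g_-(\infty),\,g_-(\infty)^{-1}g_-)$ preserves the product and enforces $g_-(\infty)=1$.

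To translate back to the variable $z=rw$, take $\lambda^f(z):=(z^{a_1},\dots,z^{a_n})$, a genuine homomorphism $\mathbb C^*\to D$, and set
\[
f_+(z):=g_+(z/r)\lambda(r)^{-1},\qquad f_-(z):=g_-(z/r).
\]
Using $\lambda(z/r)=\lambda(z)\lambda(r)^{-1}$ and commutativity in $D$, the Birkhoff identity becomes $f(z)=f_+(z)\lambda^f(z)f_-(z)$ on a neighbourhood of $|z|=r$. The map $f_+$ is holomorphic on $|z|\le r$, hence defines a germ at $0$ and lies in $LG^+$; the map $f_-$ is a priori defined and holomorphic only on $\{|z|\ge r\}\cup\{\infty\}$ and satisfies $f_-(\infty)=1$. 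To extend it to a holomorphic map $\mathbb P^1\setminus\{0\}\to G$, I use the factorization identity itself,
\[
f_-(z):=\lambda^f(z)^{-1}f_+(z)^{-1}f(z)\qquad\text{for }0<|z|\le r,
\]
which is well defined and holomorphic on that annulus because $f_+(z)\in G$ is invertible and holomorphic on $|z|\le r$, $\lambda^f$ is holomorphic on $\mathbb C^*$, and $f$ is holomorphic on $\mathcal D_R^*$. The two prescriptions for $f_-$ match on the circle $|z|=r$ by construction and so glue to a single holomorphic map on $\mathbb P^1\setminus\{0\}$.

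The main obstacle is ensuring that the factors $g_\pm$ produced by Theorem~\ref{birk} are holomorphic on a two-sided neighbourhood of $S_1$, not merely smooth up to the boundary, so that the splicing above produces genuine holomorphic maps. This regularity upgrade follows from the real-analyticity of $g$ on $S_1$, which is the restriction of a holomorphic function on an annulus: the factors $g_\pm$ then inherit real-analyticity on $S_1$ and hence extend holomorphically across it. Equivalently, one may invoke the classical holomorphic version of Birkhoff's factorization for holomorphic maps on annular domains. Once this is in place, the remaining verifications are routine algebraic manipulations in $G$.
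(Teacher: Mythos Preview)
Your argument is correct and follows the same route as the paper: rescale a representative onto the unit circle, apply Theorem~\ref{birk} (to the inverse, so that the factors come out in the order $f_+\lambda f_-$ with $f_-(\infty)=1$ already, making your absorption step superfluous), extend the minus factor across the circle via $f_-=\lambda^{-1}f_+^{-1}f$, and undo the rescaling. The concern in your last paragraph is unnecessary: having chosen $r<R$, your two prescriptions for $f_-$ are holomorphic on $\{0<|z|<r\}$ and on $\{|z|>r\}$ respectively and agree continuously on $|z|=r$, so they glue to a holomorphic map on $\mathbb{P}_1(\CC)\setminus\{0\}$ by Morera's theorem without any regularity upgrade of the Birkhoff factors.
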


\demo There is $R > 0$ such that $f$ is defined on $(\mathcal{D}_R)^*$. Then $F(z) = f(zR)$ is in the smooth loop group $L(G)$ and can be extended to an holomorphic map $\mathcal{D}^+ - \{0\}\rightarrow G$. We decompose $F = F^+ \lambda F^-$ by using Theorem \ref{birk} for the smooth loop group where $\lambda$ is the restriction to $S^1$ of a one parameter subgroup in $D$. Then as $F^+$ {\rm and} $F$ can be extended to an holomorphic map $\mathcal{D}^+-\{0\}\rightarrow G$, $F^-$ can be extended to an holomorphic map $\mathbb{P}_1(\CC)\setminus\{0\}\rightarrow G$. Now consider $f^+(z) = F^+(zR^{-1})\lambda(R^{-1})$, $f^-(z) = F^-(zR^{-1})$ as elements of $LG$ defined on $(\mathcal{D}^R)^*$. Then in $LG$ we have $f = f^+ \lambda f^-$.
\qed

\subsection{Extensions} Let $LG^0$ be the connected component of the origin in $LG$ (remark that if $G$ is simply connected then $LG^0=LG$). Let us define a central extension $\tilde{LG}$ of $LG^0$. For each $R > 0$, by using the construction in \cite[Section 4]{ps} we have a central extension of the group of holomorphic maps $(\mathcal{D}^R)^*\rightarrow G$ (as a topological space it is a non-trivial fiber bundle over the group with the circle as fibers). The construction relies on a $2$-cocycle which does not depend on a rescaling, so the central extensions are compatible for various $R$ and we get a central extension $\tilde{LG}$ of the Lie group $LG^0$. Moreover the rescaling action $\tau : \CC^*\rightarrow \text{Aut}(LG^0)$ can be lifted to an action $\tau : \CC^*\rightarrow \text{Aut}(\tilde{LG})$.

Then we consider an extension $\hat{G}$ of $\tilde{LG}$. As a manifold  $\hat{G} = \tilde{LG}\times \CC^*$. For $f,g\in\tilde{LG}$ we put
$$(f,\Gamma)(g,\Gamma') = (\tau((\Gamma')^{-1})(f)\tau(\Gamma)(g),\Gamma\Gamma').$$
The Lie algebra of $\hat{G}$ is the following extension $\hat{\mathfrak{g}}$ of $\mathfrak{Lg}$ (analog to the extension of the standard loop algebra, see \cite{kac}). As a topological vector space $\hat{\mathfrak{g}} = \mathfrak{Lg} \oplus \CC c\oplus \CC d$. For $f,g\in\mathfrak{Lg}$, $\lambda,\lambda',\mu,\mu'\in\CC$, the bracket is given by:
$$[f + \lambda c + \mu d,g + \lambda' c + \mu' d] = [f,g]_{\mathfrak{Lg}} + \text{Res}_{z = 0}(B(g,df)) c - \mu ' z \frac{df}{dz} + \mu z \frac{dg}{dz},$$
where $B$ is an invariant bilinear form obtained from an invariant bilinear non degenerated form on $\Glie$.

We set $LG^{0,\pm} = LG^0\cap LG^\pm$ and we define in the same obvious way $\tilde{LG}^\pm, \hat{G}^\pm$.

\subsection{A Manin triple}

Starting from the Lie algebra $\hat{\mathfrak {g}}$ of the Lie group $\hat{G}$ we are going to construct a Manin triple. The definition of the triple is motivated by the Riemann-Hilbert factorization in Theorem \ref{decompa}.

We write $0^{-1} = \infty$. Let us consider the following connected Lie groups:
$$\tilde{G} = \hat{G} \times \hat{G},$$
$$H = \{((f_+,\Lambda ,\Gamma),(f_-,\Lambda^{-1},\Gamma^{-1}))\in\tilde{G}| f_\pm\in LG^\pm, f_\pm(0^{\pm 1}) \in h^{\pm 1} U^\pm, h\in D\},$$
$$K = \{(F,F)|F\in \hat{G}\}.$$
\begin{rem}\label{wdef} A priori the group $H$ can not be defined as above as the central extension of $LG^0$ is given by non-trivial fiber bundles. But it follows from the construction in \cite[6.6]{ps} the central extension is canonically trivial when restricted to $LG^{0,+}$ or to $LG^{0,-}$ (a canonical section is given by the determinant in \cite[6.6]{ps}). So the group $H$ is well-defined and the formula for $H$ given above makes sense.
\end{rem}

Notice that  $LG^{0,+}\cap LG^{0,-}$ consists of holomorphic maps $\mathbb{P}_1(\CC)\rightarrow G$ and so it reduces to $G$. In particular $H\cap K \cong (\mathbb Z/2\mathbb Z)^{n+2}$. Let us consider the respective Lie algebras of $\tilde{G}$, $H$ and $K$ :
$$\tilde{\mathfrak{g}} = \hat{\mathfrak{g}}\times \hat{\mathfrak{g}},$$
$$\Hlie = \{(a_+ + \lambda c + \mu d)\oplus (a_- - \lambda c - \mu d)\in \tilde{\mathfrak{g}} | a_\pm\in \mathfrak{Lg}^\pm,a_\pm(0^{\pm 1}) \in \pm d + \mathfrak{n}_\pm, d\in\mathfrak{d} \},$$
$$\mathfrak{K} = \{g\oplus g|g\in \hat{\mathfrak{g}}\}.$$

As for usual Kac-Moody algebras \cite{kac} we have :

\begin{lem} The bilinear form on $\hat{\mathfrak{g}}$ defined by ($f,g\in\mathfrak{Lg}$, $\lambda,\mu,\lambda',\mu'\in\CC$) :
$$(f+\lambda c + \mu d,g + \lambda'c + \mu'd) = \text{Res}_{z=0}(B(f,g)z^{-1}) + \lambda\mu' + \mu\lambda',$$
is symmetric, invariant and non degenerated ($B$ is naturally extended to a bilinear form $\mathfrak{Lg}\times\mathfrak{Lg}\rightarrow\mathcal{L}$).
\end{lem}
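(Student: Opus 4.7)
The plan is to verify the three assertions—symmetry, invariance, and non-degeneracy—one by one, reducing each to a standard fact either about $B$ on $\Glie$ or about residues of meromorphic one-forms, exactly as in the classical calculation for affine Kac-Moody algebras. For symmetry, the scalar piece $\lambda\mu'+\mu\lambda'$ is manifestly invariant under the swap $(f,\lambda,\mu)\leftrightarrow(g,\lambda',\mu')$, so the real content sits in the loop term. There the two ingredients are the symmetry of $B$ on $\Glie$ together with the Leibniz-plus-residue identity $\text{Res}_{z=0}(B(df,g))+\text{Res}_{z=0}(B(f,dg))=\text{Res}_{z=0}(d\,B(f,g))=0$; expanding $f,g$ as Laurent series around $0$ (legitimate since a germ of a holomorphic map on $\mathcal{D}_R^*$ admits such an expansion) turns the residue into a sum over paired modes and reduces the claim to the symmetry of $B$ on each mode.

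For invariance I would compute $([x,y],z)-(x,[y,z])$ for $x=f+\lambda c+\mu d$, $y=g+\lambda'c+\mu'd$, $z=h+\lambda''c+\mu''d$ and check cancellation term by term. The nontrivial contributions come from three sources: (i) $\text{Res}(B([f,g],h))-\text{Res}(B(f,[g,h]))$, which vanishes by ad-invariance of $B$ on $\Glie$; (ii) the cross terms produced by the derivation $z\,d/dz$ attached to $d$, which cancel via integration by parts $\text{Res}(d(\cdot))=0$; and (iii) the matching between the cocycle $\text{Res}(B(g,df))$ appearing in the $c$-component of $[x,y]$ and the coupling $(c,d)=1$ in the form. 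These are exactly the cancellations in the classical proof that the standard bilinear form on an affine Kac-Moody algebra is invariant, and they are formal manipulations in the Laurent expansion, so they transfer without change to our setting of germs of holomorphic maps on the punctured disc.

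For non-degeneracy, I would argue in two steps. On the loop part, given a non-zero $f\in\mathfrak{Lg}$, select a non-zero Laurent coefficient $f_n\in\Glie$; non-degeneracy of $B$ on $\Glie$ yields $y\in\Glie$ with $B(f_n,y)\neq 0$, and then a suitable multiple of $yz^{-n}$ (chosen to compensate the derivative in the residue formula) pairs non-trivially with $f$. The elements $c$ and $d$ are orthogonal to the loop part and pair with each other with $(c,d)=1$, so the $2\times 2$ block they span is non-degenerate; combining the two steps gives the claim. I expect the main obstacle to be careful sign bookkeeping in the invariance calculation, since the conventions for the cocycle $\text{Res}(B(g,df))$ in the bracket and for the loop-part of the bilinear form must be aligned consistently before the cancellations in (i)--(iii) go through; once the signs are set up correctly, everything else is routine.
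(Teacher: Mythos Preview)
Your approach mirrors the paper's: symmetry is declared routine, invariance is checked by directly expanding $([x,y],z)$ and $(x,[y,z])$, and non-degeneracy is handled by first isolating the nonsingular $2\times 2$ block spanned by $c,d$ and then testing the loop part against monomials.

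There is, however, a real problem in your symmetry step. The Leibniz identity you invoke gives $\text{Res}\,B(f,dg)=-\text{Res}\,B(df,g)=-\text{Res}\,B(g,df)$, which is \emph{antisymmetry}; and your mode expansion yields $\sum_n n\,B(f_{-n},g_n)$, which likewise changes sign under $f\leftrightarrow g$. This is not carelessness on your part but a typo in the displayed formula: the intended loop pairing is the standard one $\text{Res}_{z=0}\bigl(B(f,g)\,dz/z\bigr)=\sum_n B(f_n,g_{-n})$, for which symmetry really is immediate. The paper's own argument confirms this reading: in the invariance computation it obtains $(-\mu'\, z\,df/dz,\,h)=-\mu'\,\text{Res}\,B(h,df)$, which is exactly what the $dz/z$ pairing produces, and in the non-degeneracy step it concludes $B(a_n,b)=0$ for \emph{every} $n$, which the literal $\text{Res}\,B(f,dg)$ pairing could never give at $n=0$. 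Your caution about sign bookkeeping was well placed; it should have been triggered already here. Once the pairing is read correctly (and the bracket is taken with its $[f,g]$ loop term, which your item~(i) uses but the paper's displayed bracket formula also omits), your three-mechanism outline for invariance and your non-degeneracy argument go through and coincide with the paper's.
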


\demo The symmetry is clear. For the invariance, as $\text{Res}_{z=0}B(g,df)$ is antisymmetric and $B$ is invariant, we have :
$$([f+\lambda c + \mu d,g + \lambda'c + \mu'd],h + \lambda''c+\mu''d)$$
$$= ([f,g]_{\mathfrak{Lg}} + \text{Res}_{z=0}(B(g,df)) c - \mu ' z \frac{df}{dz} + \mu z \frac{dg}{dz} , h + \lambda''c+\mu''d) $$
$$= \text{Res}_{z=0}(B([f,g]_{\mathfrak{Lg}},h)z^{-1})  -\mu' \text{Res}_{z=0}B(h,df) + \mu \text{Res}_{z=0}B(h,dg) + \mu'' \text{Res}_{z=0}B(g,df)$$
$$= (f+\lambda c + \mu d, [g,h]_{\mathfrak{Lg}} + \text{Res}_{z=0}B(h,dg) c - \mu'' z \frac{dg}{dz} + \mu' z \frac{dh}{dz}) $$
$$= (f+\lambda c + \mu d,[g + \lambda'c + \mu'd,h + \lambda''c+\mu''d]).$$

For the non-degeneracy, suppose that for any $g + \lambda'c + \mu'd \in \hat{\mathfrak{g}}$ we have :
$$(f+\lambda c + \mu d,g + \lambda'c + \mu'd) = 0.$$
As $\text{det}(\begin{pmatrix} 0 & 1\\ 1 & 0\end{pmatrix})\neq 0$ we have clearly $\lambda = \mu = 0$. Write $f = \sum_{n\in\mathbb Z}a_nz^n$. Then $B(a_n,b) = 0$ for any $b$ so $f = 0$.
\qed

Using the  bilinear form previously defined on $\hat{\mathfrak{g}}$, we get a bilinear form on $\tilde{\mathfrak{g}}$ setting for  $a,b,c,d\in\hat{\mathfrak{g}}$  :
$$\langle a\oplus b,c\oplus d\rangle = (a,c) - (b,d).$$

\begin{prop} $(\tilde{\mathfrak{g}}, \Hlie, \mathfrak{K})$ is a Manin triple with respect to the bilinear form $\langle-\, ,- \rangle$.\end{prop}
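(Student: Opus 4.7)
The plan is to verify the three conditions of a Manin triple: that $\Hlie$ and $\mathfrak{K}$ are Lie subalgebras, that each is maximally isotropic, and that $\tilde{\Glie}=\Hlie\oplus\mathfrak{K}$ as a vector space. Symmetry and invariance of $\langle-,-\rangle$ follow immediately from the corresponding properties of $(-,-)$ on $\hat{\Glie}$ and the minus-sign in the definition, while non-degeneracy is inherited factorwise.

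That $\mathfrak{K}$ is a Lie subalgebra is clear (it is a diagonal), and isotropy is immediate: $\langle g\oplus g,\, g'\oplus g'\rangle=(g,g')-(g,g')=0$. For $\Hlie$ I will take two elements $h=(a_++\lambda c+\mu d)\oplus(a_--\lambda c-\mu d)$ and $h'$ of the same shape, and check that $[h,h']$ again lies in $\Hlie$. The pointwise bracket preserves $\mathfrak{L g}^\pm$; the derivation $z\,\partial_z$ preserves $\mathfrak{L g}^\pm$ and vanishes at $0$ (resp.\ at $\infty$); and since $[\mathfrak{b}_\pm,\mathfrak{b}_\pm]\subseteq\mathfrak{n}_\pm$, the $\mathfrak{d}$-projections of both output evaluations are zero, hence in particular opposite. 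The central-extension contributions on the two factors are residues of the form $\mathrm{Res}_0 B(a_+,da_+')$ and $\mathrm{Res}_0 B(a_-,da_-')$; the first vanishes because $B(a_+,da_+')$ is a holomorphic $1$-form near $0$, and the second vanishes by the residue theorem on $\mathbb{P}^1\setminus\{0\}$ together with holomorphicity of $B(a_-,da_-')$ at $\infty$.

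For isotropy of $\Hlie$, the $c$- and $d$-contributions in $\langle h,h'\rangle$ cancel in pairs, leaving $(a_+,a_+')-(a_-,a_-')$. Under the invariant form the surviving piece depends only on the values at $0$ and at $\infty$, and using the Borel conditions $a_+(0)=e+n_+$, $a_+'(0)=e'+m_+$, $a_-(\infty)=-e+n_-$, $a_-'(\infty)=-e'+m_-$, together with the orthogonalities $B(\mathfrak{d},\mathfrak{n}_\pm)=B(\mathfrak{n}_\pm,\mathfrak{n}_\pm)=0$, both terms reduce to $B(e,e')$ and cancel.

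For the decomposition $\tilde{\Glie}=\Hlie\oplus\mathfrak{K}$, given $(x,y)\in\tilde{\Glie}$ I read off the $c$- and $d$-parts of $x-y$ to pin down the scalars $\lambda,\mu$ of the putative $h\in\Hlie$. On the $\mathfrak{L g}$-part, I use the $\Glie$-valued extension of Lemma \ref{dlie} to expand $x-y$ into positive and negative Laurent parts, placing them in $a_+$ and $-a_-$ respectively; the single degree of freedom in the constant term $c_0\in\Glie$ is fixed using the triangular decomposition $\Glie=\mathfrak{n}_-\oplus\mathfrak{d}\oplus\mathfrak{n}_+$: writing $c_0=n_-+2e+n_+$, I set $a_+(0)=e+n_+$ and $a_-(\infty)=-e-n_-$, which realizes both Borel conditions with matched opposite Cartan components. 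Uniqueness is equivalent to $\Hlie\cap\mathfrak{K}=0$: a diagonal element in $\Hlie$ forces $\lambda=\mu=0$ and $g=a_+=a_-\in\Glie$, and then the Borel conditions give $g\in\mathfrak{b}_+\cap\mathfrak{b}_-=\mathfrak{d}$ with opposite $\mathfrak{d}$-components, so $g=-g=0$. Maximal isotropy is then automatic from the direct sum and non-degeneracy of $\langle-,-\rangle$: if $x\in\Hlie^\perp$ and $x=h+k$ with $h\in\Hlie$, $k\in\mathfrak{K}$, then $\langle k,\Hlie\rangle=0$; since $\mathfrak{K}$ is itself isotropic, non-degeneracy restricted to the pairing of $\Hlie$ with $\mathfrak{K}$ forces $k=0$, whence $x\in\Hlie$. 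The most delicate point is the bookkeeping in the decomposition step: one scalar value $c_0\in\Glie$ must be split between the evaluations at $0$ and at $\infty$ so as to satisfy the two Borel constraints with matched opposite Cartan projections, and this is exactly what the triangular decomposition of $\Glie$ arranges.
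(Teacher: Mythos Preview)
Your proof is correct and follows essentially the same route as the paper: verify $\Hlie\cap\mathfrak{K}=0$, produce the direct-sum decomposition via the triangular decomposition of $\Glie$ (the paper writes out explicit formulas for the summands, you work with $x-y$ and halve to extract $\lambda,\mu,e$, but the content is identical), and check isotropy by reducing the loop part of the pairing to the constant terms and then using $B(\mathfrak{d},\mathfrak{n}_\pm)=B(\mathfrak{n}_\pm,\mathfrak{n}_\pm)=0$.

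Two minor differences are worth noting. First, you explicitly verify that $\Hlie$ is closed under the bracket, whereas the paper silently relies on $\Hlie=\mathrm{Lie}(H)$; your check is correct, and the observation $[\mathfrak{b}_\pm,\mathfrak{b}_\pm]\subseteq\mathfrak{n}_\pm$ together with the vanishing of $z\partial_z$ at $0$ and $\infty$ is exactly what is needed. Second, for maximal isotropy you invoke the general fact that two isotropic complements in a non-degenerate space are automatically maximal isotropic, while the paper computes directly; your argument is cleaner and standard.
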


\demo First as $(-,-)$ is non degenerated, symmetric and invariant on $\hat{\Glie}$, $\langle -,- \rangle$ has the same properties on $\tilde{\Glie}$.

We have $\tilde{\Glie} = \Hlie\oplus\mathfrak{K}$. Indeed if $(f+\lambda c + \mu d)\oplus (g + \lambda' c + \mu' d)\in\Hlie\cap \mathfrak{K}$, then $\lambda = \lambda ' = -\lambda '$ and so $\lambda = \lambda' = 0$ and in the same way $\mu = \mu' = 0$. We also have $f = g \in \mathfrak{Lg}^+\cap \mathfrak{Lg}^-$ and so $f$ is constant. But $f(0) = g(\infty) \in (\mathfrak{n}_+ + d )\cap (\mathfrak{n}_- - d)$ and so $f = g = 0$.
For $a\oplus b\in \tilde{\Glie}$, let us write $a = a_+ + a_0 + a_- + \lambda c + \mu d$ and $b = b_+ + b_0 + b_- + \lambda' c + \mu' d$ where $a_{\pm}, b_{\pm}\in \mathfrak{Lg}^\pm$, $(a_\pm)_0 = (b_\pm)_0 = 0$ and $a_0 , b_0\in \Glie$. Let us write $a_0 = a_0^+ + a_0^0 + a_0^-$, $b_0 = b_0^+ + b_0^0 + b_0^-$ where $a_0^\pm, b_0^\pm \in \mathfrak{n}_\pm$ and $a_0^0, b_0^0\in\mathfrak{d}$. Then let :
$$c = a_- + b_+ + b_0^- + a_0^+ (a_0^0 + b_0^0 + (\lambda + \lambda') c + (\mu + \mu')d)/2,$$
$$d = a_+ - b_+ + a_0^- - b_0^- +(a_0^0 - b_0^0 + (\lambda - \lambda ') c + (\mu - \mu') d)/2,$$
$$e = b_- - a_- + b_0^+ - a_0^+ + (b_0^0 - a_0^0 + (\lambda' - \lambda) c + (\mu' - \mu) d)/2.$$
We have $c\oplus c\in \Hlie$ and $d\oplus e\in \mathfrak{K}$, and $a\oplus b = (c\oplus c) + (d\oplus e)$.

Let us show that $\Hlie$, $\mathfrak{K}$ are isotropic. Indeed for $a,b\in \hat{\Glie}$, we have $\langle a\oplus a, b\oplus b\rangle = (a,b) - (a,b) = 0$, and for $a = f + \lambda c + \mu d\oplus g -\lambda c -\mu d$, $b = f' + \lambda' c + \mu' d\oplus g' -\lambda' c - \mu' d \in \Hlie$, we have $\langle a,b\rangle = (f(0),f'(0)) - (g(\infty),g'(\infty)) +\lambda\mu' + \mu\lambda' - \lambda \mu' - \mu\lambda '  = 0$.

Let us show that $\Hlie$, $\mathfrak{K}$ are maximal isotropic. Let $a\oplus b\in\tilde{\Glie}$ such that $(a\oplus b, h\oplus h)=0$ for any $h\in \hat{gl_2}$. So $(a-b,h)=0$ for any $h\in \hat{\Glie}$. So $a = b$ and $a \oplus b\in \mathfrak{K}$. Let $a\oplus b\in\tilde{\Glie}$ such that $\langle a\oplus b,d\oplus e\rangle=0$ for any $d\oplus e\in\Hlie$. We can suppose that $a = b$. So $(a,d-e)=0$, that is to say $(a,h)=0$ for any $h\in \hat{\Glie}$ and $a=0$.\qed

As $\hat{\mathfrak{g}}$ is graded by finite dimensional vector spaces, we have an isomorphism $\mathfrak{K}\simeq
\Hlie^*$. In the following we identify the two spaces (see \cite[Section 6.1]{es} for Manin triples in the infinite dimensional case).

From the construction in Section \ref{ph}, we have a Poisson group structure on $H$. One of the main result of the present paper is a geometric realization of $Z_\epsilon$ in terms of the geometry of $H$ : we prove that $Z_\epsilon$ is isomorphic as a Hopf Poisson algebra to an algebra of functions on $H$  that is a ring of maps (which are polynomial in coordinates corresponding to the Drinfeld generators) which separate points. The proof is based on a reduction to rank $1$ and $2$, that is why we first study in more details $GL_1$ and $GL_2$, $SL_2$. From now on whenever the group $G$ and its Lie algebra have been specified we shall denote $LG$ simply by $L$.

\section{Quantum Heisenberg algebra and $GL_1$ analytic loop group}\label{gl1}

In this section we treat the case of $GL_1$. It is of particular importance as the quantum Heisenberg algebra appears in this situation, and with this toy example we see why a new notion of analytic loop group is necessary for our study and how the elliptic curve $\mathcal{E}$ comes into the picture. Moreover in this case most of the structures can be written very explicitly.

We study a
Riemann-Hilbert factorization (Theorem \ref{decompa}). The main result
(Theorem \ref{isom1}) identifies the Poisson structure with the semi-classical limit of the center of $\U_q(\hat{\mathfrak{gl}_1})$. The symplectic leaves are described in Proposition
\ref{sympl}.

\subsection{The analytic loop group}

In the $GL_1$-case the analytic loop group $L$ is commutative as $GL_1$ is commutative. The Riemann-Hilbert factorization is :

\begin{thm} An element $f\in L$ can be uniquely written in the form $f = f_+ z^{n(f)} f_-$ where $f_{\pm}\in L^{0,\pm}$, $n(f)\in\ZZ$, $f_-(\infty) = 1$.
\end{thm}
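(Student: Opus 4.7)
The plan is to deduce everything from Theorem~\ref{decompa} together with basic complex analysis on $\mathbb{P}_1(\CC)$.

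\emph{Existence.} I apply Theorem~\ref{decompa} with $G = GL_1$. The maximal torus of $GL_1$ is $GL_1$ itself, and the only germs of algebraic homomorphisms $\CC^* \to \CC^*$ are the monomials $z \mapsto z^n$, $n \in \ZZ$. Theorem~\ref{decompa} therefore furnishes a factorization $f = f_+ z^{n(f)} f_-$ with $f_\pm \in L^{0,\pm}$ and $f_-(\infty)=1$, which is the desired form.

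\emph{Uniqueness.} Suppose $f_+ z^n f_- = g_+ z^m g_-$ with both triples satisfying the stated normalization. Using commutativity of $GL_1$, I rearrange this to
$$ h(z) := \frac{f_+(z)}{g_+(z)} = z^{m-n}\,\frac{g_-(z)}{f_-(z)} =: z^{m-n}\phi(z). $$
The left side is a nowhere vanishing holomorphic germ at $0$, giving a holomorphic map $h : \mathcal{D}^R \to \CC^*$ for some $R > 0$; and $\phi$ is a nowhere vanishing holomorphic map $\mathbb{P}_1(\CC)\setminus\{0\} \to \CC^*$ with $\phi(\infty) = 1$. The overlap equation $\phi(z) = z^{n-m}h(z)$ forces the singularity of $\phi$ at $0$ to be at worst a pole (a zero is excluded since $\phi$ is $\CC^*$-valued), so $\phi$ extends meromorphically to $\mathbb{P}_1(\CC)$ with divisor supported in $\{0,\infty\}$. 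Any such rational function is of the form $cz^a$; the normalization $\phi(\infty) = 1$ forces $a = 0$ and $c = 1$, i.e.\ $\phi \equiv 1$. Then $h(z) = z^{m-n}$ is a holomorphic germ non-vanishing at $0$, which forces $m = n$ and $h \equiv 1$. Hence $f_+ = g_+$, $f_- = g_-$ and $n(f) = n = m$.

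The argument is essentially mechanical once one observes $\mathrm{Hom}(\CC^*,\CC^*) = \ZZ$ and invokes the Liouville-type rigidity of rational functions on $\mathbb{P}_1(\CC)$ with prescribed divisor. There is no real obstacle; the only subtlety is confirming that $\phi = g_-/f_-$, a priori only defined on the punctured sphere with possibly essential singularity at $0$, in fact extends meromorphically across $0$, and this is precisely what the comparison with the holomorphic germ on the positive side supplies.
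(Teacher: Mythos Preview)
Your proof is correct. The paper does not give a formal proof of this theorem at all: it simply states the result as the $GL_1$ instance of the general Riemann--Hilbert factorization (Theorem~\ref{decompa}), and follows it with the remark that for $f=\exp(g)$ one has $f_\pm=\exp(g_\pm)$ via the splitting $\mathfrak{L}=\mathfrak{L}^+\oplus\{f\in\mathfrak{L}^-\mid f(\infty)=0\}$ of Lemma~\ref{dlie}. Your existence argument is exactly this specialization of Theorem~\ref{decompa}; note only that you implicitly use $L^\pm=L^{0,\pm}$ for $GL_1$, which holds because any nowhere-vanishing holomorphic germ on a simply connected domain has a holomorphic logarithm and hence lies in the identity component. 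For uniqueness, the paper's remark hints at deducing it from the uniqueness of the Lie-algebra splitting in Lemma~\ref{dlie} (after writing $f=z^n e^g$), whereas you give a direct Liouville-type argument on $\mathbb{P}_1(\CC)$; both routes are short and equivalent in this abelian case, and yours has the small advantage of not passing through the logarithm.
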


For $f = \text{exp}(g)$ and $g = g_+ + g_-\in \mathcal{L}$ decomposed as in Lemma \ref{dlie}, we have $f_{\pm} = \text{exp}(g_{\pm})$. $L^0$ is the Lie subgroup $f\in L$ such that $n(f)=0$ as by Theorem \ref{decompa} any $f\in L^0$ is of the form $f = e^g$ where $g\in\mathcal{L}$.

Although in general we will work with an extension  of $L^0$ as explained in the previous section, in the $GL_1$-case an extension $\hat{L}$ of $L$ can be written explicitly. The connected component of $1$ in $\hat{L}$ is the Lie subgroup $\hat{GL_1}$.

Consider the set $\hat{L} = L\times \CC^* \times \CC^*$. An element of $\hat{L}$ is a triple $(z^ne^f,\Lambda,\Gamma)$ where $f\in\mathcal{L}, \Lambda,\Gamma\in\CC^*, n\in\ZZ$. We define a product on $\hat{L}$ by :
$$(z^ne^f ,\Lambda,\Gamma)(z^m e^g,\Lambda',\Gamma')$$
$$= (z^{n+m}(\Gamma')^{-n}\Gamma^m e^{f(z(\Gamma')^{-1}) + g(z\Gamma)},
\Lambda \Lambda' \text{exp}((\frac{d}{dz}(f(z(\Gamma')^{-1}))g(z\Gamma))_{-1}),\Gamma\Gamma').$$
We have $(z^ne^f,\Lambda,\Gamma)^{-1} = (z^{-n}e^{-f}, \Lambda^{-1},\Gamma^{-1})$.
The choice of the formula, in particular of the term $(\frac{d}{dz}(f(z(\Gamma')^{-1})g(z\Gamma))_{-1}$, is done so that:
\begin{lem} The group structure on $\hat{L}$ is well-defined.
\end{lem}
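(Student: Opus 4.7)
The plan is to verify the group axioms one by one: that the multiplication formula produces a well-defined element of $\hat{\mathcal{L}}$, that $(1,1,1)$ is a two-sided identity, that the stated formula gives a two-sided inverse, and -- the main point -- that the multiplication is associative. The key observation that will drive everything is the elementary scaling identity for Laurent coefficients:
$$(h(\alpha z))_{-1} = \alpha^{-1}\, (h(z))_{-1}, \qquad h \in \mathfrak{L},\ \alpha \in \CC^*,$$
together with its immediate consequence that the pairing $(f,g) \mapsto (\frac{d}{dz}(f(\gamma z))\cdot g(\gamma z))_{-1} = \text{Res}_{w=0}(f'(w)g(w)\,dw)$ is invariant under simultaneous rescaling of the argument of $f$ and $g$.

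\textbf{Well-definedness and the easy axioms.} For any $f \in \mathfrak{L}$ and $\gamma \in \CC^*$ the germ $f(\gamma z)$ is again in $\mathfrak{L}$ (after shrinking the radius by $|\gamma|$), so both the exponent in the first slot and the $z^{-1}$-coefficient used in the second slot are well-defined complex numbers; moreover the representation $z^n e^f$ of an element of $L$ is unique up to $f \mapsto f + 2\pi i k$ for $k\in\ZZ$, and the cocycle depends only on $f'$, so the formula descends to $\hat{\mathcal{L}}$. A direct check gives $(1,1,1)$ as a two-sided identity. For the inverse, multiplying $(z^n e^f, \Lambda, \Gamma)$ on the right by $(z^{-n} e^{-f}, \Lambda^{-1}, \Gamma^{-1})$ produces first slot $z^0\Gamma^n\Gamma^{-n} e^{f(z\Gamma)-f(z\Gamma)}=1$, third slot $1$, and second slot
$$\exp\bigl(-\bigl(\tfrac{d}{dz}(f(z\Gamma))\, f(z\Gamma)\bigr)_{-1}\bigr) = \exp\bigl(-\tfrac{1}{2}\bigl(\tfrac{d}{dz}(f(z\Gamma))^2\bigr)_{-1}\bigr) = 1,$$
since the derivative of a Laurent series has vanishing $z^{-1}$-coefficient; the left-inverse check is the same.

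\textbf{Associativity.} Write $A_i = (z^{n_i} e^{f_i}, \Lambda_i, \Gamma_i)$ for $i = 1, 2, 3$. A direct expansion shows that both $(A_1 A_2) A_3$ and $A_1 (A_2 A_3)$ have third slot $\Gamma_1\Gamma_2\Gamma_3$ and a first slot of the form
$$z^{n_1+n_2+n_3}\, \Gamma_2^{-n_1}\Gamma_3^{-n_1-n_2}\Gamma_1^{n_2+n_3}\Gamma_2^{n_3}\, \exp\bigl(f_1(z/(\Gamma_2\Gamma_3)) + f_2(z\Gamma_1/\Gamma_3) + f_3(z\Gamma_1\Gamma_2)\bigr).$$
The equality of the second slots reduces to a $2$-cocycle equation. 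The logarithmic constants appearing when one absorbs the scalar factors $\Gamma_2^{-n_1}\Gamma_1^{n_2}$ and $\Gamma_3^{-n_2}\Gamma_2^{n_3}$ into the exponentials drop out: they are either killed by a derivative, or multiplied against the derivative of a Laurent series, whose $z^{-1}$-coefficient vanishes. The two $f_1 f_3$-cross-terms cancel between the two sides, and the remaining pair of identities
$$(\tfrac{d}{dz}(f_1(z/\Gamma_2))\cdot f_2(z\Gamma_1))_{-1} = (\tfrac{d}{dz}(f_1(z/(\Gamma_2\Gamma_3)))\cdot f_2(z\Gamma_1/\Gamma_3))_{-1},$$
$$(\tfrac{d}{dz}(f_2(z\Gamma_1/\Gamma_3))\cdot f_3(z\Gamma_1\Gamma_2))_{-1} = (\tfrac{d}{dz}(f_2(z/\Gamma_3))\cdot f_3(z\Gamma_2))_{-1}$$
follow from the scaling rule above (with $\alpha = \Gamma_3^{-1}$ and $\alpha = \Gamma_1$, respectively) combined with the chain rule $\frac{d}{dz}(h(\gamma z)) = \gamma\, h'(\gamma z)$.

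\textbf{Main obstacle.} The nontrivial point is the bookkeeping in the associativity check, where one must carefully track all the rescalings $\Gamma_i$ occurring both inside the exponentials and in the arguments of the $f_j$. Once the scaling invariance of the residue is isolated, however, the required cancellations are forced; indeed, that invariance is precisely why the specific formula $(\frac{d}{dz}(f(z/\Gamma')) g(z\Gamma))_{-1}$ in the definition is a group $2$-cocycle, and hence the motivation for the otherwise strange-looking substitutions in the product.
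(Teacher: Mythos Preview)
Your proof is correct and follows essentially the same approach as the paper: both reduce associativity to the scaling invariance of the residue pairing $(\tfrac{d}{dz}(f(z/\Gamma'))\,g(z\Gamma))_{-1}=(f'(z)\,g(z\Gamma\Gamma'))_{-1}$, and both verify the first and third slots directly. You additionally spell out the identity, inverse, and well-definedness checks that the paper dismisses as trivial, but the core argument is the same.
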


\demo The only point to be checked is the associativity. For the first and third term this is straightforward. As for  the middle term, we have
$$(\frac{d}{dz}(f(z(\Gamma')^{-1})g(z\Gamma))_{-1}=(f'(z)g(z\Gamma\Gamma'))_{-1},$$
and so $((z^ne^f,\Lambda,\Gamma)(z^m e^g,\Lambda',\Gamma'))(z^pe^h,\Lambda'',\Gamma'')$ gives for the middle term:
$$[f'(z)g(z\Gamma\Gamma')]_{-1} + [\frac{d}{dz}(f(z(\Gamma')^{-1}) + g(z\Gamma)) h(z\Gamma\Gamma'\Gamma'')]_{-1}$$
$$=[f'(z)g(z\Gamma\Gamma')]_{-1} + [f'(z)h(z(\Gamma')^2\Gamma\Gamma'')]_{-1} + [g'(z)h(z\Gamma'\Gamma'')]_{-1}$$
and $(z^ne^f,\Lambda,\Gamma)((z^m e^g,\Lambda',\Gamma')(z^pe^h,\Lambda'',\Gamma'')$) gives  the same middle term:
$$[g'(z)h(z\Gamma'\Gamma'')]_{-1} + [f'(z)(g(z\Gamma\Gamma')+ h(z(\Gamma')^2\Gamma\Gamma''))]_{-1}.$$\qed

\begin{rem}\label{twform}  We have the following "twisted" commuting relation:
$$(z^ne^f,\Lambda,\Gamma)(z^m e^g,\Lambda',\Gamma')$$
$$= x (\Gamma^{2m}z^me^{g(z\Gamma^2)},\Lambda',\Gamma')((\Gamma')^{-2n}z^n e^{f(z(\Gamma')^{-2})},\Lambda,\Gamma).$$
where $x$ is the central element :
$$x = (1,\text{exp}(2(\frac{d}{dz}(f(z))g(z\Gamma\Gamma'))_{-1}),1).$$
\end{rem}

In $\hat{\mathfrak{gl}_1} = \mathcal{L} \oplus \CC c\oplus \CC d$, from Remark \ref{twform}, for $f,g\in\mathcal{L}$, $\lambda,\lambda',\mu,\mu'\in\CC$, the bracket is given by:
$$[f + \lambda c + \mu d,g + \lambda' c + \mu' d] = \text{Res}_{z=0}(gdf) c - \mu ' z \frac{df}{dz} + \mu z \frac{dg}{dz}.$$

We consider $\tilde{GL_1},H,K$. We have moreover the descriptions :
$$H = \{((f_+,\Lambda ,\Gamma),(f_-,\Lambda^{-1},\Gamma^{-1}))\in\tilde{GL_1}| f_\pm\in (L^0)^\pm, f_+(0) = f_-(\infty)^{-1}\},$$
$$\Hlie = \{(a + \lambda c + \mu d)\oplus (b - \lambda c - \mu d)\in \tilde{\mathfrak{gl}_1} | a\in \mathcal{L}^+, b\in \mathcal{L}^-,a(0) = - b(\infty) \}.$$

\subsection{Poisson bracket}\label{pb} The Manin triple constructed in the previous sections should induce a Poisson bracket of suitable rings of functions on the groups $H$ and $K$ and give them the structure of Poisson Lie groups. We are going to analyze the case of the group $H$.

 An element $h\in H$ can be written uniquely as
  $$h= ((ae^{\sum_{n >0}a_nz^n},\lambda ,\gamma),(a^{-1}e^{\sum_{n <0}a_nz^n}, \lambda^{-1}, \gamma^{-1}))\in H.$$ $a,\lambda, \gamma\in \mathbb C^*$, $a_n\in\mathbb C$ for $n\in\mathbb Z\setminus\{0\}$. We then define the functions  $k$, $\Lambda$, $\Gamma$, $h_m$, $m\in\mathbb Z\setminus\{0\}$  by
  $$k(h)=a,\ \ \Lambda(h)=\lambda, \ \ \Gamma(h)=\gamma, \ \ h_m(h)=a_m.$$

 We now take the ring $\CC[H]$ as the ring of functions on $H$ which are polynomials in the functions $h_m$, $k^{\pm 1},\ \Lambda^{\pm 1},\ \Gamma^{\pm 1}$. It is clear that as a ring $\CC[H]$ is just the polynomial ring $\CC[\Lambda,\Gamma,k, h_m]_{m\in\ZZ\setminus \{0\}}$ with
   $k,\ \Lambda,\ \Gamma$ inverted.

   Thus by mapping each of the generators of $\CC[H]$ to the corresponding generator of the reduction $Z_1$ of $\mathcal{U}_{q}(\hat{\mathfrak{gl}}_1)$ at $q=1$ considered in section 3, we get an obvious isomorphism $\Phi:\CC[H]\to Z_1$.

   Using the definition of the product on  $H$ we immediately get the following Lemma whose  proof  we leave to the reader.
    \begin{lem}\label{lahop} Let $h, h'\in H$. Then
    $$k(hh')=k(h)k(h'),\ \ \Lambda(hh')=\Lambda(h)\Lambda(h'),\ \ \Gamma(hh')=\Gamma(h)\Gamma(h'),$$ $$ h_m(hh')=h_m(h)\Gamma^{-|m|}(h')+\Gamma^{|m|}(h)h_m(h'),$$
    $$k(h^{-1})=k(h)^{-1},\ \ \Lambda(h^{-1})=\Lambda(h)^{-1},\ \ \Gamma(h^{-1})=\Gamma(h)^{-1},\ \ h_m(h^{-1}) = - h_m(h),$$
    $$k(1) = 1,\ \ \Lambda(1)=1,\ \ \Gamma(1)=1,\ \ h_m(1) = 0,$$
    for each $m\in\mathbb Z\setminus \{0\}$.
    In particular we get a Hopf algebra structure on $\CC[H]$ with respect to which $\Phi$ is an isomorphism of Hopf algebras.
    \end{lem}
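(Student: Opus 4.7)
The plan is a direct computation based on the explicit product formula on $\hat{\mathcal L}$ given just above Remark \ref{twform}, applied componentwise to the two factors of an element of $H\subset \tilde{GL_1}$. I write a generic element as
$$h=\bigl((ae^{f_+},\lambda,\gamma),(a^{-1}e^{f_-},\lambda^{-1},\gamma^{-1})\bigr),\qquad f_+=\sum_{n>0}a_nz^n,\ f_-=\sum_{n<0}a_nz^n,$$
and similarly $h'$ with primed data. In the product formula the prefactor $z^n$ has $n=0$ in each factor, so the first factor of $hh'$ reads
$$\bigl(aa'\cdot e^{f_+(z/\gamma')+f'_+(z\gamma)},\ \lambda\lambda'\cdot e^{X^+},\ \gamma\gamma'\bigr),$$
with $X^+=\bigl(\tfrac{d}{dz}[\log a+f_+(z/\gamma')]\cdot[\log a'+f'_+(z\gamma)]\bigr)_{-1}$, and analogously for the second factor with a term $X^-$.

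The only non-formal step is to show $X^\pm=0$. For $X^+$, both factors of the integrand are Laurent series with only non-negative powers of $z$ (the logarithms are constants; $f_+$ starts in $z^1$ and its derivative in $z^0$), so the product lies in $\CC[[z]]$ and its $z^{-1}$ coefficient vanishes. For $X^-$, the derivative $\frac{d}{dz}(f_-(z\gamma'))$ has powers $\leq-2$ and the other factor has powers $\leq0$, so the product has powers $\leq-2$ and again the $z^{-1}$ coefficient is zero. This is the only observation that is not entirely mechanical.

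Granting this, the identities of the lemma follow by reading off each slot: the first and third components give $k(hh')=aa'$ and $\Gamma(hh')=\gamma\gamma'$, while the trivialization of $X^\pm$ yields $\Lambda(hh')=\lambda\lambda'$; expanding $f_+(z/\gamma')+f'_+(z\gamma)=\sum_{m>0}\bigl((\gamma')^{-m}a_m+\gamma^m a'_m\bigr)z^m$ and similarly $f_-(z\gamma')+f'_-(z/\gamma)=\sum_{m<0}\bigl((\gamma')^{m}a_m+\gamma^{-m}a'_m\bigr)z^m$ gives precisely
$$h_m(hh')=h_m(h)\Gamma^{-|m|}(h')+\Gamma^{|m|}(h)h_m(h'),$$
with the $|m|$ appearing through the two sign cases. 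The formulas for the inverse and for the identity fall out of the explicit expressions $(z^ne^f,\Lambda,\Gamma)^{-1}=(z^{-n}e^{-f},\Lambda^{-1},\Gamma^{-1})$ and $1=(1,1,1)$ applied to each factor of $\tilde{GL_1}$.

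It remains to upgrade these identities to a Hopf algebra statement. By construction the multiplication table just verified is the Hopf algebra comultiplication on $\CC[H]$ evaluated on generators, and the antipode rules are verified likewise. But on each generator $k,\Lambda,\Gamma,h_m$ these formulas match, tautologically, the comultiplication and antipode defined on the corresponding generator of $Z_1\subset\U_1(\hat{\mathfrak{gl}}_1)$ in Section \ref{defqaa}. Since these elements generate $\CC[H]$ as an algebra and $\Phi$ is already known to be a ring isomorphism, Lemma \ref{isom} (applied just to the Hopf axioms, ignoring the Poisson bracket which will be treated later) yields that $\Phi$ is an isomorphism of Hopf algebras. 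The main obstacle is really just the vanishing of the anomalous central contribution $X^\pm$, which is what forces the seemingly delicate factor $\exp((\cdot)_{-1})$ in the definition of the multiplication on $\hat{\mathcal L}$ to disappear on the subgroup $H$.
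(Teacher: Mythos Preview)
Your proof is correct and is exactly the direct computation the paper intends when it says ``we leave to the reader.'' The key observation---that the cocycle term $\exp((\frac{d}{dz}f\cdot g)_{-1})$ vanishes on $H$ because on the $+$ side the product has only non-negative powers and on the $-$ side only powers $\leq -2$---is the one nontrivial point, and you handle it cleanly. One minor remark: invoking Lemma~\ref{isom} for the Hopf part is slightly off, since that lemma is phrased for Poisson generators; but the principle you actually use (a ring isomorphism commuting with $\Delta$ and $S$ on algebra generators is a Hopf isomorphism) is standard and needs no further justification.
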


Our second task is the computation of the Poisson bracket of two elements in $\CC[H]$ with respect to the Poisson structure induced by the Manin triple.

    The main result of this Section is

\begin{thm}\label{isom1} The Poisson bracket of two elements in $\CC[H]$ lies in $\CC[H]$. Furthermore
$\Phi:\CC[H]\to Z_1$ is an isomorphism of    Poisson Hopf algebras.
\end{thm}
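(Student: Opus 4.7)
By Lemma~\ref{lahop}, $\Phi$ is already an isomorphism of commutative Hopf algebras, so by Lemma~\ref{isom} it suffices to check that the Poisson brackets of the generators $k^{\pm 1},\Gamma^{\pm 1},\Lambda^{\pm 1},h_m$ ($m\neq 0$) on $H$ are polynomial in these generators and match, under $\Phi$, the corresponding brackets on $Z_1$. The latter are read off from the proof of Proposition~\ref{frgl}: $\Gamma$ and $k$ are Poisson-central, $\{\Lambda,\Gamma\}=\{\Lambda,k\}=0$, $\{h_m,\Lambda\}=m\Lambda h_m$, $\{h_m,h_{-m}\}=\Gamma^{2m}-\Gamma^{-2m}$ and $\{h_m,h_{m'}\}=0$ for $m+m'\neq 0$.

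To evaluate the Manin triple formula $\{f_1,f_2\}(h)=(\pi^h(dl_h^*df_1(h)),dl_h^*df_2(h))$ on pairs of generators, I would first compute $dl_h^*df(h)\in\Hlie^*\cong\mathfrak{K}$ using Lemma~\ref{inv} together with the coproducts recorded in Lemma~\ref{lahop}. For the group-like generators $\zeta\in\{\Gamma,\Lambda,k\}$ this gives $dl_h^*d\zeta(h)=\zeta(h)\,d\zeta(1)$, while for $h_m$ a direct derivative calculation produces $dl_h^*dh_m(h)=\Gamma^{|m|}(h)\,dh_m(1)-|m|h_m(h)\,d\Gamma(1)$. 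Using the explicit bilinear form on $\hat{\mathfrak{gl}_1}$ and the description of $\Hlie$, one identifies the values at the identity as elements of $\mathfrak{K}\cong\hat{\mathfrak{gl}_1}$ (up to a uniform scalar): $d\Gamma(1)\leftrightarrow c$, $d\Lambda(1)\leftrightarrow d$, $dk(1)\leftrightarrow 1\in\mathfrak{L}$, and $dh_m(1)\leftrightarrow\varepsilon_m z^{-m}\in\mathfrak{L}$, with $\varepsilon_m=\pm 1$ according to the sign of $m$.

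The heart of the computation is then $\pi^h=Ad(h^{-1})\circ\pi\circ Ad(h)$ applied to these four classes of elements of $\mathfrak{K}$. The constant $1\in\mathfrak{L}$ and the central element $c\in\hat{\mathfrak{gl}_1}$ are both $Ad$-invariant and lie in the kernel of $\pi$, which immediately gives $\{\Gamma,\cdot\}=\{k,\cdot\}=0$. For the remaining cases the conjugation action of $h=(h_+,h_-)\in H$ on $d$ and on $z^{-m}$ is governed by the group law of $\hat{\mathcal{L}}$ and the twist relation of Remark~\ref{twform}: the $\Gamma$-factors of $h$ send $z^{-m}$ to $\Gamma^{\mp 2m}(h)z^{-m}$ on the two sides of the diagonal, the $\Lambda$-factors contribute $[d,z^{-m}]=-mz^{-m}$, and the exponential parts produce corrections in $\Hlie$ together with central terms coming from the cocycle $(f'g)_{-1}$. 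Projecting back onto $\Hlie$ and pairing with the second covector, one recovers exactly the brackets listed above; in particular the factor $\Gamma^{2m}-\Gamma^{-2m}$ in $\{h_m,h_{-m}\}$ arises as the infinitesimal shadow of this twist.

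Since the resulting brackets are polynomial in the generators, the Poisson structure closes inside $\CC[H]$, and Lemma~\ref{isom} then gives the isomorphism of Poisson Hopf algebras. The main obstacle is computational: one has to control $Ad(h)$ on $\hat{\mathfrak{gl}_1}$ carefully, tracking the interplay between the central cocycle and the rescaling by $\Gamma$; in particular one must verify that the correction $-|m|h_m(h)\,d\Gamma(1)$ in $dl_h^*dh_m(h)$ contributes only via the $d$-component of $\pi^h(z^{-m})$, which vanishes because $Ad$ preserves the $d$-direction modulo lower-order terms.
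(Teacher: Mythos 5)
Your proposal follows essentially the same route as the paper's proof: reduce via Lemma \ref{lahop} and Lemma \ref{isom} to checking the brackets of the generators $k,\Lambda,\Gamma,h_m$, compute $dl_h^*$ of these from the coproduct, identify the results inside $\mathfrak{K}$ via the bilinear form, and evaluate $\pi^h$ while tracking the $\Gamma^{\pm 2}$ rescaling on the two components and the cocycle term $(f'g)_{-1}$ -- exactly the mechanism by which the paper obtains $\{h_m,h_{m'}\}=(\Gamma^{2m}-\Gamma^{-2m})\delta_{m,-m'}$, $\{h_m,\Lambda\}=m\Lambda h_m$ and the Poisson-centrality of $k,\Gamma$. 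The only blemishes are minor: the explicit evaluation of $\pi^h$ (which is the bulk of the paper's proof) is asserted rather than carried out, and the infinitesimal rescaling $[d,z^{-m}]=-mz^{-m}$ is produced by the $\Gamma$-components of $h$ rather than the $\Lambda$-components (which are central and act trivially under $Ad$), but neither point affects the correctness of the outline.
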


\demo In the following, to simplify notations we are going to write, for $h\in H$,  just $\Gamma , \Lambda , h_m$ for $\Gamma(h),\Lambda(h),h_m(h)$.

Fix  $h, h'\in H$. By Lemma \ref{lahop} we have for $m > 0$:
$$h_m(hh') = h_m(h)(\Gamma(h'))^{-m} + (\Gamma(h))^m h_m(h'),$$
 We deduce:
\begin{equation*}
\begin{split}
dl_h^*(dh_m) &= \Gamma^m dh_m - h_m m \Gamma^{-1}d\Gamma\in \Hlie^*
\\&= (\Gamma^m z^{-m} - h_m m c) \oplus (\Gamma^m z^{-m} - h_m m c)\in\mathfrak{K}.
\end{split}
\end{equation*}
In the same way for $m < 0$:
$$dl_h^*(d h_m) = -((\Gamma^{-m}z^{-m} + h_m m c) \oplus(\Gamma^{-m}z^{-m} + h_m m c)).$$
So for $m > 0$ we get that $Ad(h)(dl_h^*(dh_m))$ is equal to :
\begin{equation*}
\begin{split}
 & \text{exp}(h_+)z^{-m}\Gamma^{-m}\text{exp}(-h_+) - h_m m c \oplus \text{exp}(h_-)\Gamma^{3m} z^{-m}\text{exp}(-h_-) - h_m m c
\\=& z^{-m} \Gamma^{-m}\oplus  \Gamma^{3m} z^{-m} - h_m m c.
\end{split}
\end{equation*}
This implies :
$$\pi(Ad(h)(dl_h^*dh_m)) = (0\oplus (\Gamma^{3m} - \Gamma^{-m})z^{-m}) + mh_m (c\oplus (-c))/2,$$
$$\pi_h(dl_h^*dh_m) = (0\oplus (\Gamma^m - \Gamma^{-3m})z^{-m}) + mh_m (c\oplus  (-c))/2.$$
So for $m' > 0$ we have $\{h_m, h_{m'}\} = 0$ and for $m' < 0$ :
$$\{h_m , h_{m'}\}  = \Gamma^{-m'}(\Gamma^{m} - \Gamma^{-3m}) (z^{-m}, z^{ -m'}) = (\Gamma^{2m} - \Gamma^{-2m}) \delta_{m,-m'}.$$
In the same way $\{h_m,h_{m'}\} = 0$ for $m,m' < 0$, and for $m,m'\in\ZZ-\{0\}$ :
$$\{h_m , h_{m'}\} = (\Gamma^{2m} - \Gamma^{-2m}) \delta_{m,-m'}.$$
We have for $h,h'\in H$
$$\Lambda(hh') = \Lambda(h) \Lambda(h')\text{ , }\Gamma(hh') =\Gamma(h)\Gamma(h')\text{ , }k(hh') = k(h)k(h').$$
So the images by $dl_h^*$ respectively of $d\Gamma$, $d\Lambda$, $dk$ are :
$$d\Gamma = \Gamma (c\oplus (-c))\text{ , }dl_h^*(d\Lambda) = d\Lambda = \Lambda(d\oplus (-d))\text{ , }
dl_h^*(dk) = k(1\oplus 1)/2.$$
So $\{\Gamma,\CC[H]\} = \{k,\CC[H]\} = 0$, and for $m\neq 0$ :
$$\{h_m, \Lambda\} = m\Lambda h_m.$$
This proves all our claims.
\qed

\subsection{Symplectic leaves} In this section we describe the symplectic leaves of $H$ (Proposition \ref{sympl}). We also see how an elliptic curve comes into the picture.

\subsubsection{Description}

Consider the map
$$\gamma : \tilde{GL_1}\rightarrow \hat{GL_1}\text{ , }\gamma(a,b) = a^{-1}b.$$
We have  remarked that  $H\cap K=(\mathbb Z/2\mathbb Z)^3$ and we claim that the restriction of $\gamma$ to $H$ is a Galois covering with group  $H\cap K$. Indeed for $f^\pm\in\mathcal{L}^{\pm}$ without constant term and $\alpha,\Lambda,\Gamma\in\CC^*$, we have
$$\gamma^{-1}((\alpha^2 e^{f_+ + f_-},\Lambda^2,\Gamma^2)) $$
$$= \{((\epsilon_1\alpha^{-1}e^{-f_+},\epsilon_2\Lambda^{-1},\epsilon_3\Gamma^{-1}),(\epsilon_1\alpha e^{f_-},\epsilon_2\Lambda,\epsilon_3\Gamma)),\epsilon_1,\epsilon_2,\epsilon_3\in \mathbb Z/2\mathbb Z\}.$$
Thus two elements in the fiber differ by multiplication times  $((\epsilon_1,\epsilon_2,\epsilon_3),(\epsilon_1,\epsilon_2,\epsilon_3))\in H\cap K$.

We have a commutative diagram :
$$\begin{CD}H@>{i}>>\tilde{GL_1}\\ @V{\gamma}VV  @V{p}VV\\\hat{GL_1}@<{j}<<  K\setminus \tilde{GL_1}\end{CD}  ,$$
 where $i$ is the inclusion, $p$ is the projection and $j$ is the obvious isomorphism. By Proposition \ref{symp}, the symplectic leaves of $H$ are the connected components of the  preimages under $p\circ i$ of $K$ orbits in $K\setminus \tilde{GL_1}$, thus they are the connected components of  preimages under $\gamma$ of conjugacy classes in $\hat{GL_1}$.  $\Gamma$  is preserved along the symplectic leaves of $H$ since it could only change his sign.   So fix the value of $\Gamma$ and assume that $\Gamma$  is not a root of unity.

 We have
 \begin{lem} Let $h=(e^f,\lambda,\Gamma)\in \hat {GL_1}$. Assume that $\Gamma$ is not a root of unity.  Then $h$ is conjugated to a unique element  of the form
 $(\alpha,\lambda',\Gamma)$ with $\alpha\in \mathbb C$.
 \end{lem}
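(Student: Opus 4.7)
The plan is to reduce both existence and uniqueness to the analysis of a scalar difference equation on Laurent coefficients, using the explicit product formula for $\hat{\mathcal L}$ given earlier in the section.

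\smallskip

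\noindent\emph{Existence.} I would conjugate $h=(e^{f},\lambda,\Gamma)$ by an element of the simple form $u=(e^{g},1,1)\in \hat{GL}_1$. Using the product formula together with $u^{-1}=(e^{-g},1,1)$, a direct application of the formula gives
$$u\,h\,u^{-1} \;=\; \bigl(e^{f(z)+g(z\Gamma^{-1})-g(z\Gamma)},\; \lambda',\; \Gamma\bigr)$$
for an explicit $\lambda'\in\CC^*$ (the third coordinate is unaffected, and the middle coordinate is a residue of known terms). Thus it suffices to find $g\in\mathfrak L$ making the first exponent a constant $\alpha$. Expanding $f=\sum_{n\in\ZZ}f_nz^n$, the $z^0$-coefficient forces $\alpha=f_0$, while for $n\ne 0$ the coefficient of $z^n$ forces
$$g_n \;=\; \frac{f_n}{\Gamma^n-\Gamma^{-n}},$$
a well-defined complex number since $\Gamma$ is not a root of unity.

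\smallskip

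\noindent The substantive step is to check that the resulting Laurent series $g=\sum_{n\ne 0}g_nz^n$ actually lies in $\mathfrak L$, i.e.\ converges on a punctured disc about $0$. Assume $|\Gamma|>1$ (the case $|\Gamma|<1$ is symmetric, and in this paper $|\Gamma|\ne 1$ anyway). For $n>0$ we have $|\Gamma^n-\Gamma^{-n}|\gtrsim |\Gamma|^n$, so $\limsup|g_n|^{1/n}\le |\Gamma|^{-1}\limsup|f_n|^{1/n}$; hence the positive part of $g$ is holomorphic on a disc strictly larger than the disc of convergence of $f^+$. For $n\to-\infty$, the same estimate $|\Gamma^n-\Gamma^{-n}|\gtrsim|\Gamma|^{-n}$ together with $\lim_{n\to-\infty}|f_n|^{1/|n|}=0$ (since $f^-\in\mathfrak L^-$) gives $\lim|g_n|^{1/|n|}=0$, so the principal part of $g$ is holomorphic on all of $\CC^*$. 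Therefore $g\in\mathfrak L$ and existence holds.

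\smallskip

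\noindent\emph{Uniqueness.} Suppose $(\alpha,\lambda',\Gamma)$ and $(\alpha'',\lambda'',\Gamma)$ are conjugate in $\hat{GL}_1$ by some $u=(e^{g},\mu,\delta)$. Applying the same product formula (now to a constant $f=\log\alpha$) yields
$$u\,(\alpha,\lambda',\Gamma)\,u^{-1} \;=\; \bigl(\alpha\,e^{g(z\delta\Gamma^{-1})-g(z\delta\Gamma)},\;*,\;\Gamma\bigr).$$
For this to equal the constant $\alpha''$, the exponent must be a constant, so the coefficient of $z^k$ gives $g_k\delta^k(\Gamma^{-k}-\Gamma^k)=0$ for all $k\ne 0$; since $\Gamma$ is not a root of unity, $g_k=0$ for $k\ne 0$. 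Thus $g$ is constant, the exponential factor equals $1$, and $\alpha''=\alpha$.

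\smallskip

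\noindent The main obstacle I anticipate is the convergence argument in the existence step: the naive solution of the difference equation $g(z\Gamma^{-1})-g(z\Gamma)=f_0-f(z)$ is obvious at the level of formal Laurent series, but one must verify that the inverse of the operator $g\mapsto g(z\Gamma^{-1})-g(z\Gamma)$ acts on $\mathfrak L$. This is exactly where the hypothesis $|\Gamma|\ne 1$ (a fortiori, $\Gamma$ not a root of unity) is essential; the unit-circle case would introduce small-divisor problems.
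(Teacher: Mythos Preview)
Your argument is correct and follows essentially the same route as the paper's own proof: solve the coefficient-by-coefficient equation $g_n(\Gamma^{-n}-\Gamma^{n})=-f_n$ for $n\neq 0$, and for uniqueness observe that conjugating a constant by $(e^g,\mu,\delta)$ forces $g_n=0$ for $n\neq 0$. The only differences are cosmetic: you specialize the conjugating element to $(e^g,1,1)$ from the outset (the paper keeps a general third coordinate $\rho$, which is unnecessary), and you explicitly verify that the resulting $g$ lies in $\mathfrak L$, a convergence issue the paper passes over in silence. One small remark: your convergence argument uses $|\Gamma|\neq 1$, whereas the lemma as stated only assumes $\Gamma$ is not a root of unity; this stronger hypothesis is indeed the standing one in the surrounding discussion, so your proof matches the intended scope, but you are right that on $|\Gamma|=1$ the small-divisor problem is genuine and neither your argument nor the paper's covers it. For completeness you might also note in the uniqueness step that once $g$ is constant the residue terms in the middle coordinate vanish, so $\lambda''=\lambda'$; this is immediate but worth one sentence.
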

 \begin{proof} Write $f=\sum_{n\in \mathbb Z}a_nz^n$. If we take an element $(e^g,\mu,\rho)$ with $g=\sum_{n\in \mathbb Z}c_nz^n$ and we conjugate by it, we get that $h$ is transformed into the element
 $$(e^{g(z\rho^{-1}\Gamma)+f(z\Gamma^2)-g(z\rho\Gamma)},\tilde \lambda,\rho)$$
 where $\tilde \lambda$ is a suitable explicit non zero complex number.
  If we expand $g(z\rho\Gamma^{-1})+f(z\rho^2)-g(z\rho\Gamma)=\sum_{n\in \mathbb Z} b_nz^n$ we get that for every integer $n$
  $$b_n=c_n\rho^n(\Gamma^{-n}-\Gamma^n)+a_n\rho^{2n}.$$
  If for $n\neq 0$ we take
  $$c_n=\frac {a_n\rho^n}{\Gamma^{n}-\Gamma^{-n}}$$
  which we can do since $\Gamma$ is not a root of unity. We have conjugated $h$ to an element of the form $(\alpha,\lambda',\Gamma)$.

  It remains to see that these elements are pairwise non conjugated. In fact for $(\alpha,\lambda,\Gamma)$ conjugated by $(e^g,\mu,\rho)$ to another such element, $g$ is constant by the above computation, and the resulting element is $(\alpha,\lambda,\Gamma)$.\end{proof}

  Let us now assume we are in the generic case  $|\Gamma|\neq 1$. In the next Proposition, we compute explicitly the middle term, and so we get an explicit description of the symplectic leaves :

\begin{prop}\label{sympl} The symplectic leaves of $H$ are the $(H_{\alpha,\Lambda})_{\alpha,\Lambda\in\CC^*}$ where
$$H_{\alpha,\Lambda} = ((\alpha^{-1},\Lambda^{-1},1),(\alpha,\Lambda,0))H_{1,1}$$
and $H_{1,1}$ is equal to the set of elements of the form :
$$((e^{f_+(t)},\text{exp}(- \sum_{n\neq 0}\frac{n(f)_n(f)_{-n}}{2(1-\Gamma^{-4n})^2}) , \Gamma^{-1}),(e^{f_-(t)},\text{exp}(\sum_{n\neq 0}\frac{n(f)_n(f)_{-n}}{2(1-\Gamma^{-4n})^2}),\Gamma))\in H$$
where $f_\pm\in \mathcal{L}^\pm$, $f_-(\infty) = f_+(0) = 0$, and $f = f_- - f_+\in\mathcal{L}$.
\end{prop}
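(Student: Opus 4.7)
The plan is to combine Proposition~\ref{symp} with the preceding Lemma on normal forms of elements of $\hat{GL}_{1}$ and a direct computation in the central extension. From the commutative diagram $p\circ i=\gamma$ above, Proposition~\ref{symp} identifies the symplectic leaves of $H$ with the connected components of the preimages under $\gamma$ of the conjugacy classes in $\hat{GL}_{1}$. Since we are in the generic range $|\Gamma|\neq 1$, the preceding Lemma shows that every such class admits a unique representative of the form $(\alpha,\lambda,\Gamma^{2})$, so the leaves are naturally labelled by pairs $(\alpha,\lambda)\in\CC^{*}\times\CC^{*}$.

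The first step is a reduction to the ``base'' leaf. One checks that $t_{\alpha,\Lambda}:=((\alpha^{-1},\Lambda^{-1},1),(\alpha,\Lambda,1))$ belongs to $H$, that $\gamma(t_{\alpha,\Lambda})=(\alpha^{2},\Lambda^{2},1)$, and that this element is central in $\hat{GL}_{1}$ (its first component is constant and its third is $1$, so the cocycle contribution in the product formula vanishes). Combining this centrality with the identity $\gamma((c,d)(a,b))=a^{-1}(c^{-1}d)b$ yields $\gamma(t_{\alpha,\Lambda}h)=(\alpha^{2},\Lambda^{2},1)\,\gamma(h)$ for every $h\in H$; hence left multiplication by $t_{\alpha,\Lambda}$ bijectively maps the preimage of the conjugacy class of $(1,1,\Gamma^{2})$ onto the preimage of the class of $(\alpha^{2},\Lambda^{2},\Gamma^{2})$. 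It therefore suffices to identify $H_{1,1}$ with the preimage of the class of $(1,1,\Gamma^{2})$.

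For a general element $h=((e^{f_{+}},\nu_{+},\Gamma^{-1}),(e^{f_{-}},\nu_{-},\Gamma))\in H$ with $f_{+}(0)=f_{-}(\infty)=0$ and $\nu_{+}\nu_{-}=1$, two applications of the product formula produce $\gamma(h)=(e^{F(z)},\tilde{\Lambda},\Gamma^{2})$, where $F(z)=f_{-}(z\Gamma)-f_{+}(z\Gamma^{-1})$ has zero constant term and $\tilde{\Lambda}$ is an explicit expression involving $\nu_{-}^{2}$ together with a cocycle correction bilinear in the coefficients $(f_{\pm})_{n}$. Following the proof of the Lemma above, one conjugates $\gamma(h)$ by an element $(e^{g},1,\rho)$ with $g$ chosen to kill the non-constant part of $F$; a second application of the product formula expresses the canonical middle term $\Lambda_{0}$ as $\tilde{\Lambda}$ times another bilinear correction. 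The condition $\Lambda_{0}=1$ becomes a single equation determining $\nu_{-}$ (hence $\nu_{+}=\nu_{-}^{-1}$), which after the substitution $f=f_{-}-f_{+}$ and the symmetrization identity
\[
\frac{1}{(1-\Gamma^{4n})^{2}}-\frac{1}{(1-\Gamma^{-4n})^{2}}=\frac{\Gamma^{4n}+1}{\Gamma^{4n}-1}
\]
rearranges into the manifestly $f$-symmetric exponential $\exp(\mp\sum_{n\neq 0}\tfrac{nf_{n}f_{-n}}{2(1-\Gamma^{-4n})^{2}})$ appearing in the statement. Connectedness of each $H_{\alpha,\Lambda}$ is automatic, since once $\Gamma$ is fixed it is parametrized by the affine space of pairs $(f_{+},f_{-})$.

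The principal technical obstacle is the careful bookkeeping of the central cocycle of $\hat{GL}_{1}$ through the two successive products (the inverse-times-element defining $\gamma(h)$, and the conjugation normalizing the first component of $\gamma(h)$), together with the verification that the combined corrections reorganize, under $f=f_{-}-f_{+}$, into the manifestly symmetric formula of the statement. All remaining steps are formal consequences of Proposition~\ref{symp} and the preceding Lemma.
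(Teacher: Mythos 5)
Your overall strategy is the paper's: both arguments rest on Proposition~\ref{symp}, on the explicit description of the fibers of $\gamma$, and on the normal-form lemma for $\hat{GL_1}$ in the regime $|\Gamma|\neq 1$, and both reduce the statement to one explicit computation of the central cocycle of $\hat{\mathcal L}$ under conjugation. The only organizational difference is the direction of that computation: the paper starts from the base point $h=((\alpha^{-1},\Lambda^{-1},\Gamma^{-1}),(\alpha,\Lambda,\Gamma))$, conjugates the constant element $\gamma(h)=(\alpha^2,\Lambda^2,\Gamma^2)$ by a general $(e^{g},\Lambda',\Gamma')$, reads off the conjugacy class as the set of $(\alpha^2e^{F},\Lambda'',\Gamma^2)$ with $\Lambda''$ an explicit bilinear expression in the $F_n$ (inverting $(F)_n=(g)_n(\Gamma')^{-n}(\Gamma^{2n}-\Gamma^{-2n})$), and then pulls back through the fibers of $\gamma$ computed earlier in the section; you instead push a general $h\in H$ forward, normalize $\gamma(h)$ to constant form, and impose that the normalized middle term equal $1$, after a preliminary translation by $t_{\alpha,\Lambda}$ (a correct step: $\gamma(t_{\alpha,\Lambda})=(\alpha^2,\Lambda^2,1)$ is central, so left multiplication permutes preimages of classes). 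These are the same computation run in opposite directions, so the route is sound.

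The gap is that this computation, which is the actual content of the proposition, is only announced, not performed. You assert that the condition $\Lambda_0=1$ ``rearranges into'' the stated exponential, but the one identity you offer in support is false by a sign: with $x=\Gamma^{4n}$ one has $\tfrac{1}{(1-x)^2}-\tfrac{1}{(1-x^{-1})^2}=\tfrac{1-x^2}{(1-x)^2}=\tfrac{1+x}{1-x}=-\tfrac{x+1}{x-1}$, not $\tfrac{x+1}{x-1}$. Your route is more sign-sensitive than the paper's, because it accumulates two cocycle corrections (one from forming $\gamma(h)$ as a product of an inverse with an element of $L^-$, one from the normalizing conjugation), whereas the paper has a single conjugation whose cocycle term reduces cleanly to $\sum_{n}n\,g_ng_{-n}\Gamma^{-4n}$, and the claimed formula for the middle entries of $H_{1,1}$ is precisely a statement about how these corrections combine. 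As written, the final identification with the exponential $\exp(\pm\sum_{n\neq 0}\tfrac{n f_nf_{-n}}{2(1-\Gamma^{-4n})^2})$ cannot be checked and will not go through verbatim with the identity you state; you need to carry out the two applications of the product formula explicitly and solve for $\nu_-$, being careful with the constants (note that the paper's own convergence check writes the denominators as $(\Gamma^{2n}-\Gamma^{-2n})^2$, so exact bookkeeping of these factors is exactly where the proof lives). You should also say a word on why the sets $H_{\pm\alpha,\pm\Lambda}$, which map into the same conjugacy class, are separated into distinct connected components (the constant term $k$ and the sign ambiguity of the middle entry are locally constant on the preimage), rather than declaring connectedness ``automatic.''
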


\demo
 First remarks that in the formulas of the lemma the middle term is well defined. Indeed let $R > 0$ such that $f_+$ is holomorphic on $\mathcal{D}_R$. We know that $f_-$ is holomorphic on $\mathbb{P}_1(\CC)^*$. So the term equal to
$$-\sum_{n > 0}\frac{n(f_+)_nR^n}{(\Gamma^{2n} - \Gamma^{-2n})^2}((f_-)_{-n}R^{-n}\Theta^{n}) - \sum_{n < 0}\frac{n(f_+)_{-n}R^{-n}}{(\Gamma^{2n} - \Gamma^{-2n})^2}((f_-)_nR^n\Theta^{n}).$$
is a convergent sum.

Let $\alpha,\Lambda\in\CC^*$ and consider the following element
$$h = ((\alpha^{-1},\Lambda^{-1},\Gamma^{-1}),(\alpha,\Lambda,\Gamma))\in H.$$
Let $\tilde{g} = (e^{g(t)},\Lambda',\Gamma')\in \hat{GL_1}$ where $\Lambda',\Gamma'\in\CC^*$ and $g\in\mathcal{L}$. Then we have
$$\tilde{g}^{-1}\gamma(h) = (\alpha^2e^{-g(t\Gamma^{-2})},\Lambda^2(\Lambda')^{-1},\Gamma^2(\Gamma')^{-1}),$$
$$\tilde{g}^{-1}\gamma(h)\tilde{g} = (\alpha^2e^{F(t)},\Lambda'',\Gamma^2)$$
where $F(t) = g(t \Gamma^2 (\Gamma')^{-1})-g(t \Gamma^{-2}(\Gamma')^{-1})$ and
$$\Lambda'' = \Lambda^2 \text{exp}(-[\frac{d}{dt}(g(t\Gamma^{-2}(\Gamma')^{-1}))g(t \Gamma^2 (\Gamma')^{-1})]_{-1}).$$
We get the result as $(F)_n = (g)_n(\Gamma')^{-n}(\Gamma^{2n}-\Gamma^{-2n})$.
\qed

\subsubsection{Elliptic curve}

The symplectic leaves are parameterized by $\CC^*\times \CC^*$. In fact if we consider the whole loop group $L$ instead of $L^0$, we have additional elements $z^n$ for $n\neq 0$. The conjugation by these elements changes $\alpha$ to elements in $\alpha \Theta^{\ZZ}$ and we get a parametrization by $\mathcal{E}\times \CC^*$ where $\mathcal{E}$ is the elliptic curve defined in Section \ref{manin}. Such parameterizations related to $\mathcal{E}$ will appear as well for other types.

\section{Analytic loop group for $GL_2$ and $SL_2$}\label{sldeux}

In this section we treat the case of $GL_2$. This case is crucial as we will see in the last section that the general result can be proved by using a reduction to this case. We study a Riemann-Hilbert factorization of the analytic loop group (Theorem \ref{decompb}). In particular we see how the relation between Drinfeld generators
appear and the main result is Theorem \ref{isomt} where the Poisson structure is identified with the semi-classical limit of the center of the quantum affine algebra. The holomorphic symplectic leaves are parameterized in Theorem \ref{sl}.

\subsection{Analytic loop group}
$SL_2\subset GL_2$ is simply-connected, $L SL_2\subset L GL_2$ is connected, but $LGL_2$ is not.

Remark: in addition to the good properties  that we have already discussed, $L GL_2$ includes elements  of the form $A + z^{\pm 1}B$ with $\text{det}(A)\neq 0$. So  $LSL_2$ contains elements of the form $(A + z^{\pm 1}B)(\text{det}(A + z^{\pm 1}B))^{-\frac{1}{2}}$.

We can consider the Fourier coefficients $\mathcal{F} = \{\mathcal{F}_n\}_{n\in\ZZ}$ of elements in $LGL_2$. In the following $\mathcal{F}_n(f)$ will be denoted by $(f)_n$.

In this case the Riemann-Hilbert  factorization takes the form:
\begin{thm}\label{decompb} An element $f\in LGL_2$ can be written in the form
$$f = f_+ \text{diag}(z^{n_1},z^{n_2}) f_-$$
 where $f_{\pm}\in {LGL_2}^\pm$, $n_1,n_2\in\ZZ$.
\end{thm}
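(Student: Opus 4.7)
The plan is to deduce this directly from the general Riemann-Hilbert factorization of Theorem \ref{decompa} applied to $G = GL_2$, by making the one-parameter subgroup $\lambda$ of the Cartan torus explicit. Since $GL_2$ is a finite dimensional complex algebraic group, Theorem \ref{decompa} applies and yields, for any $f \in LGL_2$, a decomposition $f = f_+ \lambda f_-$ with $f_\pm \in LGL_2^\pm$ and $\lambda$ (the germ of) a one-parameter subgroup of a chosen maximal torus.

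The main step is then to describe all one-parameter subgroups of a maximal torus in $GL_2$. Take $D \subset GL_2$ to be the standard diagonal torus, so $D \cong (\mathbb{C}^*)^2$. The algebraic homomorphisms $\mathbb{C}^* \to \mathbb{C}^*$ are exactly the characters $z \mapsto z^n$ for $n \in \mathbb{Z}$, so every algebraic homomorphism $\lambda : \mathbb{C}^* \to D$ takes the form $\lambda(z) = \text{diag}(z^{n_1}, z^{n_2})$ for a unique pair $(n_1, n_2) \in \mathbb{Z}^2$. Substituting this description of $\lambda$ into the factorization provided by Theorem \ref{decompa} yields the claimed statement.

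There is no substantial obstacle here: the content of the theorem is essentially the statement of Theorem \ref{decompa}, singled out because it will be used repeatedly in explicit coordinates for the $GL_2$ and $SL_2$ computations that follow. It is worth remarking that $LGL_2$ is disconnected with component group $\mathbb{Z}$, detected by the winding number of $z \mapsto \det f(z)$; the integer $n_1 + n_2$ coincides with this invariant and is therefore locally constant on $LGL_2$. For the $SL_2$ restriction, the constraint $\det f \equiv 1$ combined with uniqueness of Birkhoff factorization for $LGL_1$ forces $n_1 + n_2 = 0$, so the factorization specializes to $\lambda(z) = \text{diag}(z^n, z^{-n})$, consistent with $LSL_2$ being connected and simply connected.
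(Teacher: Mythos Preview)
Your proposal is correct and matches the paper's approach exactly: the paper does not give a separate proof of Theorem~\ref{decompb} at all, introducing it with the phrase ``In this case the Riemann-Hilbert factorization takes the form'' to indicate that it is simply Theorem~\ref{decompa} specialized to $G = GL_2$, with the one-parameter subgroup of the diagonal torus written out explicitly as $\mathrm{diag}(z^{n_1},z^{n_2})$. Your additional remarks about the winding number $n_1+n_2$ and the $SL_2$ restriction also agree with the paper's subsequent Remark and Lemma.
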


Remark : $n(f) = n_1 + n_2$ is uniquely determined by $f$ as it is equal to $n(\text{det}(f))$ in the sense of the section on $GL_1$. It is the winding number (as defined for example in \cite[Section 13]{hsw}). Moreover $n_1(f) = n_1$ and $n_2(f) = n_2$ are also uniquely determined by $f$ (Theorem \ref{birk}). As for the case of usual loops, we have :

\begin{lem} The connected component of $1$ in $L GL_2$ is
$$LGL_2^0 = \{f\in L GL_2|n(f) = 0\}.$$\end{lem}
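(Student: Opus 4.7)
The plan is to establish the two inclusions. For $LGL_2^0 \subseteq \{f \mid n(f) = 0\}$, I would show that the winding number $n : LGL_2 \to \ZZ$ is continuous, hence locally constant since $\ZZ$ is discrete. By the remark after Theorem \ref{decompb}, $n(f) = n(\det f)$, and $f \mapsto \det f$ is a continuous homomorphism $LGL_2 \to L$ for the $GL_1$ analytic loop group, while $n$ on $L$ is locally constant by the previous section. Since $n(1) = 0$, the connected component of $1$ is contained in the clopen set $\{n = 0\}$.

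For the reverse inclusion, let $f \in LGL_2$ with $n(f) = 0$. By Theorem \ref{decompb} write
$$f = f_+ \, \mathrm{diag}(z^{n_1}, z^{n_2}) \, f_-$$
with $f_\pm \in LGL_2^\pm$ and $n_1 + n_2 = n(f) = 0$, so $n_2 = -n_1$ and the middle factor lies in $LSL_2$. I would construct a path from $f$ to $1$ by homotoping the three factors one at a time: first deform $f_+$ inside $LGL_2^+$ to the constant value $f_+(0) \in GL_2$ through the family $t \mapsto f_+(tz)$ for $t \in [0,1]$; next deform $f_-$ inside $LGL_2^-$, whose elements extend to $\mathbb P_1(\CC) \setminus \{0\}$ and so have well-defined values at $\infty$, to the constant $f_-(\infty) \in GL_2$ through $t \mapsto f_-(z/t)$ on $t \in (0,1]$ extended by $f_-(\infty)$ at $t = 0$; finally deform the diagonal loop $\mathrm{diag}(z^{n_1}, z^{-n_1})$ to $1$ inside $LSL_2$, which is connected since $SL_2$ is simply connected. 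After these three stages, the product becomes $f_+(0) f_-(\infty) \in GL_2$, which connects to $1$ in the connected group $GL_2$.

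The main obstacle I anticipate is verifying that these homotopies are continuous in the topology on $LGL_2$ defined in Section \ref{defian}: one must check that the rescaling paths $t \mapsto f_+(tz)$ and $t \mapsto f_-(z/t)$ are continuous into the inductive limit / Fr\'echet topology on $LGL_2^\pm$, using that shrinking $t$ enlarges the disc of holomorphy on the $+$ side, and that $f_-(z/t)$ tends uniformly on compacts of $\mathbb P_1(\CC) \setminus \{0\}$ to the constant $f_-(\infty)$ on the $-$ side. Connectedness of $LSL_2$ itself, asserted at the start of the section, can be pinned down either from the Birkhoff factorization in $SL_2$ by exhibiting an explicit null-homotopy of $\mathrm{diag}(z^n, z^{-n})$ via a one-parameter family through a Weyl rotation in $SL_2$, or from the general principle $\pi_0(LG) = \pi_1(G)$ applied to the simply connected group $SL_2$.
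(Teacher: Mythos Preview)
Your proposal is correct and follows the same overall skeleton as the paper --- both directions rest on the Birkhoff factorization of Theorem~\ref{decompb} together with the connectedness of $LSL_2$ --- but you handle the $GL_2$-versus-$SL_2$ discrepancy differently. The paper extracts the determinant algebraically: it rewrites
\[
f = \frac{f_+}{(\det f_+)^{1/2}}\,\bigl(\mathrm{diag}(z^{n_1},z^{-n_1})(\det f_+\det f_-)^{1/2}\bigr)\,\frac{f_-}{(\det f_-)^{1/2}},
\]
so that the outer factors lie in $LSL_2$ (hence in the identity component), and is left only with a scalar loop, which is handled by the $GL_1$ case already treated. You instead shrink $f_\pm$ directly to their constant values $f_+(0)$, $f_-(\infty)$ via the rescaling homotopies $t\mapsto f_+(tz)$, $t\mapsto f_-(z/t)$, and then connect the remaining constant in $GL_2$ to $1$. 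Your route is arguably more elementary --- it avoids choosing square roots of $\det f_\pm$ on a punctured disc --- at the cost of the continuity verifications you flag; the paper's route trades those for the (brief) remark that the square root is well-defined on the connected domain where the determinants are non-vanishing, and then appeals only to the already-established $GL_1$ result rather than to connectedness of $GL_2$ itself.
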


\demo The connected component of $1$ is included in $\{f\in L GL_2|n(f) = 0\}$. Then for $f\in L GL_2$, we have from Theorem \ref{decompb}
$$f = \frac{f_+}{(\text{det}(f_+))^{\frac{1}{2}}}(\text{diag}(z^{n_1},z^{-n_1})(\text{det}(f_+)\text{det}(f_-))^{\frac{1}{2}})  \frac{f_-}{(\text{det}(f_-)^{\frac{1}{2}})}.$$
Note that the root square can be chosen so that it is well-defined on the connected domain where the respective $\text{det}$ are non zero. As $L SL_2$ is connected, it suffices to prove that $(\text{det}(f_+)\text{det}(f_-))^{\frac{1}{2}} I_2$ is in the connected component of the identity. This follows from the $GL_1$-case studied above.
\qed

\subsection{Poisson bracket} In order to prove Theorem \ref{isomt}, we are now going to compute more explicitly the bracket defined in Section \ref{manin}.

We use the strategy of \cite{dcp}: we compute left invariant forms. We treat in this section the case $SL_2$, the results are analog for $GL_2$ (with modifications explained at the end of the Section).

An element $M\in SL_2$ can be written in the form $M = e^{M_+}e^{M_0}e^{M_-}$ where $M_\pm\in \mathfrak{n}^\pm$, $M_0\in\mathfrak{d}$ if and only if $M_{1,1}\neq 0$ (this is the big cell of $SL_2$). In the same way we can consider the big cell of $H$, that is to say of elements $h\in H$ written in the form :
$$h = ((\text{exp}(h_+^+)\text{exp}(h_+^0)\text{exp}(h_+^-),\Lambda,\Gamma),(\text{exp}(h_-^+)\text{exp}(h_-^0)\text{exp}(h_-^-),\Lambda^{-1},\Gamma^{-1}))$$
where $h_\pm ^+$ (resp. $h_\pm^-$, $h_\pm^0$) have value in $\mathfrak{n}^+$ (resp. $\mathfrak{n}^-$, $\mathfrak{d}$).

$\Lambda$, $\Gamma$, $\text{exp}(h_+)(0) = (\text{exp}(h_-)(\infty))^{-1} = k$ depend only of $h$ and so give maps $\Lambda,\Gamma : H\rightarrow \CC^*$ and $k : H\rightarrow \CC^*$ (for $\Lambda$ see Remark \ref{wdef}). For $r\in\ZZ\setminus\{0\}$, $h_r = 2((h_+^0)_{1,1} - (h_-^0)_{1,1})_r$ depends only of $h$ (this is analog to the case $GL_1$) and for $m\in\ZZ$, $x_m^+ = ((h_+^+ + h_+^-)_{1,2})_m$, $x_m^- = ((h_-^+ + h_-^-)_{2,1})_m$ depend only of $h$ (because they are Fourier coefficients of coefficients of $\text{exp}(h_{\pm}^+), \text{exp}(h_{\pm}^-)$).

We work with the coordinate ring $\CC[H] = \CC[\Lambda^{\pm 1},\Gamma^{\pm 1},k^{\pm 1}, x_m^\pm,h_r]_{m\in\ZZ,r\in\ZZ-\{0\}}$ of maps which are polynomial in the $\Lambda^{\pm 1}$, $\Gamma^{\pm 1}$, $k^{\pm 1}$, $x_m^\pm$, $h_r$. The main result of this Section is :

\begin{thm}\label{isomt} The Poisson bracket of two elements in $\CC[H]$ lies in $\CC[H]$. Furthermore
$\CC[H]$ and $Z_\epsilon$ are isomorphic as Poisson Hopf algebras.
\end{thm}

First note that for two elements in $\hat{SL_2}$ of the form :
$$A \in  (e^{aX_-}e^{bH}e^{eX_+},\CC^*,\Gamma)\text{ , } B = (e^{a'X_-}e^{b'H}e^{e'X_+},\CC^*,\Gamma'),$$
we have formally :
$$AB \in (\text{exp}(X_-(a(t) + \frac{e^{-2b(t\Gamma)} a'(t\Gamma^2)}{1+e(t\Gamma^2) a'(t\Gamma^2)}))$$
$$\times\text{exp}(H_-(b(t(\Gamma')^{-1})+b'(t\Gamma) + ln(1+a'(t\Gamma (\Gamma')^{-1})e(t\Gamma (\Gamma')^{-1}))))$$
$$\times\text{exp}(X_-(e'(t) + \frac{e^{-2b'(t(\Gamma')^{-1})}e(t(\Gamma')^{-2})}{1+e(t(\Gamma')^{-2})a'(t(\Gamma')^{-2})})),\CC^*,\Gamma\Gamma'),$$
$$A^{-1} \in (\text{exp}(X_-\frac{- a(t\Gamma^{-2})e^{2b(t\Gamma^{-1})}}{1 + e(t)e^{2b(t\Gamma^{-1})}a(t\Gamma^{-2})})$$
$$\times\text{exp}(H(-b(t) + ln(1+e(t\Gamma)e^{2b(t)}a(t\Gamma^{-1}))))
\times\text{exp}(X_+\frac{-e(t\Gamma^2)e^{2b(t\Gamma)}}{1+e(t\Gamma^2)a(t)e^{2b(t\Gamma)}}),\CC^*,\Gamma^{-1}).$$

Here we use an abuse of notation by writing $\CC^*$ for the central extension (see Remark \ref{wdef}). But this is not a problem as we do not compute anything for the central elements here.

We define $x^{+,\pm}(z), h^+(z)\in \CC[H][[z]]$ and $x^{-,\pm}(z), h^-(z)\in \CC[H][[z^{-1}]]$ :
$$x^{\pm ,\pm}(z) = \sum_{m > 0} x_{\mp m}^{\pm} z^{\pm m} \in z^{\pm 1}\CC[[z^{\pm 1}]],$$
$$x^{\pm ,\mp}(z) = \sum_{m\geq 0} x_{\mp m}^{\mp} z^{\pm m} \in \CC[[z^{\pm 1}]],$$
$$h^{\pm }(z) = \sum_{m > 0} h_{\mp m} z^{\pm m} \in z^{\pm 1}\CC[[z^{\pm 1}]].$$
We will also consider $\phi^{\pm}(z) = k^{\pm 1}e^{\pm h^{\pm}(z)}$.

The structures of $\CC[H]$ are extended to $\CC[H][[z^{\pm 1}]]$ in an obvious way.

\begin{lem} We have :
$$\Delta(x^{\pm ,-}(z)) = x^{\pm ,-}(z) \otimes 1 + \frac{(\phi^{\pm})^{-1}(z\Gamma^{\pm 1})\otimes x^{\pm ,-}(z\Gamma^{\pm 2}\otimes 1)}{1 + x^{\pm,+}(z\Gamma^{\pm 2})\otimes x^{\pm ,-}(z\Gamma^{\pm 2}\otimes 1)},$$
$$\Delta(x^{\pm,+}(z)) = 1 \otimes x^{\pm,+}(z) + \frac{x^{\pm,+}(z\otimes \Gamma^{\mp 2})\otimes (\phi^{\pm})^{-1}(z\Gamma^{\mp 1})}{1 + x^{\pm,+}(z\otimes \Gamma^{\mp 2})\otimes x^{\pm,-}(z\Gamma^{\mp 2})},$$
$$\Delta(h^{\pm}(z)) = h^{\pm}(z\otimes \Gamma^{\mp 1})\otimes 1 + 1\otimes h^{\pm}(z \Gamma^{\pm 1} \otimes 1) $$
$$\pm 2ln(1 + x^{\pm,+}(z\Gamma^{\pm 1}\otimes \Gamma^{\mp 1})\otimes x^{\pm,-}(z \Gamma^{\pm 1}\otimes\Gamma^{\mp 1})),$$
$$\Delta(k) = k\otimes k\text{ , }\Delta(\Gamma) = \Gamma \otimes \Gamma',$$
$$S(x^{\pm,-}(z)) = - \frac{\phi^{\pm}(z \Gamma^{\mp 1})x^{\pm,-}(t\Gamma^{\mp 2})}{ 1 + x^{\pm,+}(z)x^{\pm,-}(z\Gamma^{\mp 2})\phi^{\pm}(z\Gamma^{\mp 1})},$$
$$S(x^{\pm,+}(z)) = - \frac{\phi^{\pm}(z\Gamma^{\pm 1})x^{\pm,+}(z\Gamma ^{\pm 2\mu})}{ 1 + x^{\pm,+}(z\Gamma^{\pm 2})x^{\pm,-}(z)\phi^{\pm}(z\Gamma^{\pm 1})},$$
$$S(h^{\pm}(z))     = - h^{\pm}(z)  \pm 2ln(1 + x^{\pm,+}(z\Gamma^{\pm 1})x^{\pm,-}(z\Gamma^{\mp 1})\phi^{\pm}(z)),$$
$$S(k) = k^{-1}\text{ , }S(\Gamma) = \Gamma^{-1}.$$
\end{lem}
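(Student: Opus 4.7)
The plan is to establish each identity by direct computation on the Zariski-dense open subset of $H$ (respectively $H\times H$) where the two components $f_\pm$ of an element of $H$ admit Gauss decompositions $f_\pm = e^{h_\pm^+}e^{h_\pm^0}e^{h_\pm^-}$; by continuity of the regular functions involved, each coefficient identity will extend from this dense locus to all of $H$. The essential input is the pair of displayed formulas that immediately precede the lemma, which describe exactly how Gauss factors in $\hat{SL_2}$ transform under the twisted multiplication and inversion of the central extension.

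For the coproduct identities, I would take $h,h'\in H$ in the big cell and apply the displayed $\hat{SL_2}$ multiplication rule twice: once to the pair of germs $f_+(h),f_+(h')$ near $z=0$, and once to the germs $f_-(h),f_-(h')$ near $z=\infty$. Reading off the Gauss factors of $hh'$ from the right-hand side of that rule, one then extracts Fourier coefficients and repackages them into the generating series $x^{\pm,\pm}(z)$, $h^\pm(z)$. Under this repackaging, the scalar substitutions $t\mapsto t\Gamma^{\pm 2}$ and $t\mapsto t(\Gamma')^{\mp 2}$ occurring in the multiplication formula convert into the tensor-valued substitutions $z\mapsto z\Gamma^{?}\otimes 1$ and $z\mapsto 1\otimes z (\Gamma')^{?}$ that appear on the right-hand sides of the stated formulas. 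In particular the denominator $1+e(t\Gamma^2)a'(t\Gamma^2)$ becomes $1+x^{\pm,+}(z\Gamma^{\pm 2})\otimes x^{\pm,-}(z\Gamma^{\pm 2}\otimes 1)$ in $\Delta(x^{\pm,-}(z))$, and the term $\ln(1+a'e)$ in the middle Gauss factor of $A\cdot B$ produces the $\pm 2\ln(1+x^{\pm,+}\otimes x^{\pm,-})$ contribution in $\Delta(h^\pm(z))$. The simpler identities $\Delta(k)=k\otimes k$ and $\Delta(\Gamma)=\Gamma\otimes \Gamma$ then follow because $k$ and $\Gamma$ are multiplicative characters of $H$: for $\Gamma$ this is by construction, and for $k=f_+(0)=f_-(\infty)^{-1}$ it is immediate from the boundary condition defining $H$ together with the multiplication rule restricted to the big cell.

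For the antipode identities, I would repeat the same procedure using the displayed formula for $A^{-1}$: decompose $h^{-1}$ componentwise, read off its Gauss data, and translate coefficient-wise into the series $S(x^{\pm,\pm}(z))$ and $S(h^\pm(z))$. The identities $S(k)=k^{-1}$ and $S(\Gamma)=\Gamma^{-1}$ are again immediate from the characterness of $k$ and $\Gamma$.

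The chief obstacle will be the careful bookkeeping of the shifts by $\Gamma^{\pm 1}$ and $\Gamma^{\pm 2}$ induced by the central extension of $LG^0$, together with the sign and index conventions that distinguish the series expanded near $z=0$ from those expanded near $z=\infty$. Once a precise dictionary between the $t$-shifts on the $\hat{SL_2}$ side and the mixed $z\Gamma^{?}\otimes \Gamma^{?}$-shifts on the $\Delta$-side has been fixed, the remaining work is coefficient-by-coefficient identification, which is algebraically routine though notation-heavy; the passage from the big cell to all of $H$ is then a straightforward continuity argument.
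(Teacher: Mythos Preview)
Your proposal is correct and matches the paper's approach: the paper gives no explicit proof of this lemma at all, only the remark that the formulas make sense in $\CC[H][[z^{\pm 1}]]$, clearly intending it as a direct transcription of the displayed $AB$ and $A^{-1}$ formulas immediately preceding the statement. Your plan to read off the Gauss data of $hh'$ and $h^{-1}$ from those formulas and repackage the Fourier coefficients into the generating series is exactly this transcription, so there is nothing further to compare.
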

Note that all formulas involved in the Lemma make sense in $\CC[H][[z^{\pm 1}]]$.

The left invariant differential form coinciding with $dk$ in $1$ is $k^{-1}dk$. We have :

\begin{lem}\label{linv} The left invariant differential forms respectively coinciding in $1$ with $d(x^{+,-}(z)),d(x^{+,+}(z)),  d(h^+(z))$ are equal modulo $\CC[H][[z]] \Gamma^{-1}d\Gamma$ to :
$$d_{+,-}(z) = \phi_+(z\Gamma^{-1}) d(x^{+,-})(z\Gamma^{-2}),$$
$$d_{+,+}(z) = d(x^{+,+})(z) + x^{+,+}(z) d(h^+)(z\Gamma^{-1}) $$
$$- \phi_+(z\Gamma^{-1}) (x^{+,+}(z))^2 d(x^{+,-})(z\Gamma^{-2}),$$
$$d_+(z) = d(h^+(z\Gamma^{-1})) - 2\phi_+(z\Gamma^{-1})x^{+,+}(z) d(x^{+,-}(z\Gamma^{-2})).$$
\end{lem}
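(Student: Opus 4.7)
The proof is a direct application of Lemma \ref{inv}: the left-invariant $1$-form coinciding with $da$ at the identity is
\[
\omega(a) := \sum S(a_{(1)})\, d(a_{(2)}),
\]
extended $\CC[[z^{\pm 1}]]$-linearly. My plan is to substitute the coproduct and antipode formulas from the preceding lemma into $\omega(a)$ for each of $a = x^{+,-}(z)$, $x^{+,+}(z)$, $h^+(z)$ in turn and simplify.

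The calculation is clarified by two observations. First, since $d(\Gamma^k) = k\Gamma^k \cdot (\Gamma^{-1}d\Gamma)$, working modulo $\CC[H][[z]]\Gamma^{-1}d\Gamma$ allows any $\Gamma$-shift inside the argument of a generator to be treated as constant under $d$, so that $d(a(z\Gamma^k))\equiv (da)(z\Gamma^k)$ modulo the discarded ideal. Secondly, $S(\Gamma^k) = \Gamma^{-k}$, so applying $S$ to the first tensor slot flips the sign of the $\Gamma$-shifts that appear inside arguments there. Together these facts reconcile the shifts $\Gamma^{+1}$, $\Gamma^{+2}$ coming from the coproduct with the shifts $\Gamma^{-1}$, $\Gamma^{-2}$ in the final formulas.

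For $a = x^{+,-}(z)$, only the non-trivial term of $\Delta(a)$ contributes since $d(1) = 0$. Expanding the rational expression in $\Delta(a)$ as a formal geometric series, applying $S \otimes d$ term by term, and using the antipode identity $S(b)b = \varepsilon(b)$ to collapse the higher-order sums, one extracts the single surviving contribution $\phi^+(z\Gamma^{-1})\, d(x^{+,-})(z\Gamma^{-2})$. For $a = x^{+,+}(z)$ the procedure is analogous: the additive piece $1\otimes x^{+,+}(z)$ gives $d(x^{+,+})(z)$ directly, while expanding the remaining quotient produces, after applying the antipode on the first factor and collecting terms via the same identity, the cross term $x^{+,+}(z)\, d(h^+)(z\Gamma^{-1})$ (coming from the logarithmic derivative of $\phi^+$) and the quadratic correction $-\phi^+(z\Gamma^{-1})(x^{+,+}(z))^2\, d(x^{+,-})(z\Gamma^{-2})$ (coming from differentiating an $x^{+,-}$ factor in the geometric expansion). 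For $a = h^+(z)$ the additive coproduct yields $d(h^+)(z\Gamma^{-1})$, while expanding $\ln(1+u)$ in the logarithmic correction of $\Delta(h^+(z))$ and retaining only the contribution whose second slot produces $d(x^{+,-})(z\Gamma^{-2})$ supplies the remaining $-2\phi^+(z\Gamma^{-1})\, x^{+,+}(z)\, d(x^{+,-})(z\Gamma^{-2})$.

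The main obstacle is the careful bookkeeping: tracking $\Gamma$-shifts through the antipode, consistently truncating the geometric-series and logarithm expansions modulo $\CC[H][[z]]\Gamma^{-1}d\Gamma$, and checking that auxiliary contributions (in particular those involving $dk$) collapse via the antipode identities $S(b)b = \varepsilon(b)$ applied to subfactors of the form $\phi^{+}(z\Gamma^{-1})\cdot (\phi^{+})^{-1}(z\Gamma^{-1})$ that arise after the geometric summation. No input beyond the Hopf-algebra structure and the explicit coproduct/antipode formulas of the preceding lemma is needed.
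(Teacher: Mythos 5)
Your overall strategy is the same as the paper's: apply Lemma \ref{inv} with the explicit coproduct and antipode formulas of the preceding lemma, differentiate the second tensor factor, and simplify (the paper writes this out only for $d_{+,-}$ and declares the other two cases analogous). However, the mechanism you invoke for the crucial simplification is not valid. The identity $S(b)b=\varepsilon(b)$ does not hold in $\CC[H]$: in a commutative Hopf algebra of functions on a group one has $(S(b)b)(h)=b(h^{-1})b(h)$, which equals $\varepsilon(b)$ only for group-like $b$ (already $S(h_m)h_m=-h_m^2\neq 0$ in the $GL_1$ case). The elements $\phi^{+}$, $x^{+,\pm}$, $h^{+}$ are precisely \emph{not} group-like here; their antipodes, as listed in the preceding lemma, carry nontrivial correction factors such as $\bigl(1+x^{+,+}(z\Gamma)x^{+,-}(z\Gamma^{-1})\phi^{+}(z)\bigr)$, so in particular $S(\phi^{+})(z)\neq(\phi^{+}(z))^{-1}$, and your "subfactors of the form $\phi^{+}(z\Gamma^{-1})\cdot(\phi^{+})^{-1}(z\Gamma^{-1})$" only equal $1$ if you tacitly replace $S(\phi^{+})$ by the multiplicative inverse, which is exactly the unjustified step.

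Concretely, the higher-order terms of your geometric expansion do not "collapse" leaving a single surviving contribution: they resum into a squared denominator $\bigl(1+S(x^{+,+})(z\Gamma^{-2})x^{+,-}(z\Gamma^{-2})\bigr)^{2}$ (this is what the quotient rule produces in the paper's displayed computation for $d_{+,-}$), and the stated formula $d_{+,-}(z)=\phi_{+}(z\Gamma^{-1})d(x^{+,-})(z\Gamma^{-2})$ emerges only because the correction factors inside the genuine antipodes $S(x^{+,+})$ and $S(\phi^{+})$ cancel that denominator. If you drop those corrections via the false identity, the cancellation you need is unaccounted for, and the same issue propagates into the $d_{+,+}$ and $d_{+}$ computations (where the logarithmic term of $\Delta(h^{+})$ and of $S(h^{+})$ must be kept in full). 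To repair the argument, do as the paper does: apply $S\otimes d$ to the closed rational expressions of the coproducts, use the quotient rule, and then substitute the explicit antipode formulas, checking the cancellation of the squared denominators rather than appealing to $S(b)b=\varepsilon(b)$. Your preliminary remarks (discarding $\Gamma^{-1}d\Gamma$ terms so that $d(a(z\Gamma^{k}))\equiv(da)(z\Gamma^{k})$, and $S(\Gamma^{k})=\Gamma^{-k}$ flipping the shifts) are correct and are indeed the right bookkeeping devices.
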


\demo By using Lemma \ref{inv}, we get that $d_{+,-}(z\Gamma^{-1})$ is equal modulo $\CC[H][[z]] \Gamma^{-1}d\Gamma$ to :
$$\frac{S(\phi^+)(z\Gamma^{-1})(1 + S(x^{+,+})(z\Gamma^{-2}) x^{+,-}(z\Gamma^{-2})) d(x^{+,-})(z\Gamma^{-2})}{(1 + S(x^{+,+})(z\Gamma^{-2}) x^{+,-}(z\Gamma^{-2}))^2}$$
$$- \frac{S(\phi^+)(z\Gamma^{-1}) x^{+,-}(z\Gamma^{-2})S(x^{+,+})(z\Gamma^{-2}) d(x^{+,-})(z\Gamma^{-2})}{(1 + S(x^{+,+})(z\Gamma^{-2}) x^{+,-}(z\Gamma^{-2}))^2}.$$
This gives the result. The proof is analog for the other forms.\qed

We now use these forms to compute brackets and to prove Theorem \ref{isomt}. Let us give some technical results on series with two parameters which will be useful in the following. For $a$ a smooth map $S_1 \rightarrow \CC$ and $m\in\ZZ$, we denote by $(a)_{\geq m}$ the truncation of $a$  at $m$, that is the smooth map such that $ ((a)_{\geq m})_p=(a)_p $ if $p\geq m$ and $((a)_{\geq m})_p = 0$ otherwise. We use an analogous definition for $(a)_{>m},(a)_{\leq m}, (a)_{< m}$. For a formal variable $X$, $a(X)$ denotes the formal sum $\sum_{r\in \ZZ}a_r X^r$.

\begin{lem}\label{aideform} Let $a,e$ be smooth maps $S_1 \rightarrow \CC$ such that $(a)_m = (e)_m = 0$ for $m < 0$. Then :
$$\sum_{m,m'\geq 0}w^{m'}z^m (e(a)_{\geq m})_{m+m'} = e(w)\sum_{m,m'\geq 0} w^{m'}z^{m} (a)_{m + m'},$$
$$\sum_{m,m'\geq 0}w^{m'}z^m (e(a)_{\leq m})_{m + m'} = a(z)\sum_{m,m'\geq 0} w^{m'}z^m (e)_{m + m'},$$
$$\sum_{m\geq 0,m' > 0}w^{m'}z^m (e(a)_{\leq m})_{m + m'} = a(z)\sum_{m\geq 0,m' > 0} w^{m'}z^m(e)_{m + m'}.$$
\end{lem}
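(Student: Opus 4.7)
The plan is to prove all three identities by direct Fourier-coefficient bookkeeping, treating them as formal power series identities in $z$ and $w$; the key point is to identify, for each product $e(a)_{\geq m}$ or $e(a)_{\leq m}$, exactly which pairs $(k, m+m'-k)$ in the convolution give a non-zero contribution, and then to do a linear change of summation indices.

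For the first identity, I would start by writing
\[
\bigl(e(a)_{\geq m}\bigr)_{m+m'} \;=\; \sum_{k \in \ZZ} e_k \,\bigl((a)_{\geq m}\bigr)_{m+m'-k},
\]
and observe that the second factor is non-zero precisely when $m+m'-k \geq m$, i.e.\ $k \leq m'$; combined with the hypothesis $e_k = 0$ for $k<0$, the sum collapses to $\sum_{k=0}^{m'} e_k a_{m+m'-k}$. Substituting this into the left-hand side and then changing variables via $j = m'-k$ (so $j,k \geq 0$ are independent and $m'=j+k$) factorizes the triple sum as $\bigl(\sum_k e_k w^k\bigr) \cdot \bigl(\sum_{m,j\geq 0} a_{m+j} w^j z^m\bigr)$, which is exactly $e(w)$ times the claimed right-hand side.

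For the second identity, the same approach gives $\bigl(e(a)_{\leq m}\bigr)_{m+m'} = \sum_{k=m'}^{m+m'} e_k a_{m+m'-k}$, the upper bound coming from $a_{m+m'-k}=0$ for $m+m'-k<0$, and the lower bound from $(a)_{\leq m}$. Setting $j = k-m'$ and $l = m-j$ (with $j,l \geq 0$ independent and $m = j+l$, $k = j+m'$) factorizes the sum as $\bigl(\sum_l a_l z^l\bigr) \cdot \bigl(\sum_{j,m'\geq 0} e_{j+m'} w^{m'} z^j\bigr) = a(z) \sum_{m,m'\geq 0} w^{m'} z^m (e)_{m+m'}$. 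The third identity follows from exactly the same manipulation as the second: since the change of variables touches only $m$ and $k$ (not $m'$), restricting $m' > 0$ on both sides preserves the identity.

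The whole argument is formal and requires no convergence estimates; the only slightly subtle step is getting the upper summation bounds correct when combining the hypothesis $(a)_m = (e)_m = 0$ for $m<0$ with the truncation condition, and making sure that the change of variables $(m,k,m') \mapsto (l,j,m')$ (or its analogue) is a bijection onto the correct index set so that the double sum genuinely factorizes. I do not expect any serious obstacle.
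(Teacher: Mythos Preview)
Your proposal is correct and follows essentially the same approach as the paper: both arguments expand the Fourier coefficient of the truncated product as a convolution, identify the effective summation range from the truncation and the vanishing of negative coefficients, and then perform a linear change of indices to factor the triple sum. The only difference is cosmetic (the paper indexes the convolution by the position of the $a$-coefficient rather than the $e$-coefficient), and the paper likewise dispatches the third identity by remarking that the argument for the second goes through unchanged.
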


\demo For the first equality, the left term is equal to:
\\$\sum_{m,m'\geq 0,r\geq 0}w^{m'}z^m e_{ m' - r} a_{m + r})
\\= \sum_{m'\geq 0}\sum_{r\geq 0}e_{m' - r}w^{m'-r} (\sum_{m\geq 0} w^rz^m a_{m + r}),$
which is equal to the right term. For the second equality the left term is equal to :
\\$\sum_{m,m'\geq 0, r\leq m}w^{m'}z^m e_{m + m' -r}a_r
\\= \sum_{m\geq 0}\sum_{r\leq m} z^ra_r (\sum_{m'\geq 0}w^{m'}z^{m-r} e_{m' + m -r}),$
which is equal to the right term. The last equality is proved in the same way.\qed

We use similar notations and results for germs of analytic maps on the punctured disc. Now let us prove Theorem \ref{isomt}:

\demo For the Poisson algebra isomorphism, this is a consequence of Lemma \ref{linv} : as we have left invariant forms it suffices to compute the brackets at $0$. Let us write in details the computation for the following formulas which hold in $Z_\epsilon \simeq Z_1$ (other brackets are computed in the same way) :
\begin{equation}\label{un}\{x_{-m}^-,x_{-m'}^-\} = 2(\sum_{r\leq m'}x_{-r}^- x_{-m-m'+r}^- - \sum_{r\leq m}x_{-r}^-x_{-m-m'+r}^-)\text{ for $m,m'\geq 0$.}\end{equation}
\begin{equation}\label{deux}\{x_{-m}^-,h_{-m'}\} = - 4\Gamma^{m'}x_{-m-m'}^-\text{ for $m\geq 0, m' > 0$.}\end{equation}
\begin{equation}\label{trois}\{x^-_{-m},x^+_{-m'}\} = -2 \Gamma^{m-m'} ((\phi^+(t))^{-1})_{m + m'}\text{ for $m\geq 0, m'>0$.}\end{equation}
\begin{equation}\label{quatre}\{k,x_{-m}^-\} = 2kx_{-m}^-.\end{equation}

For any $h \in H$ we have :
$$\{d_{+,-}(z),d_+(w)\}(h) = (\pi_h((d x_{+,-})_0(z)) , ((dh)_0(w))).$$
As $(d x_{+,-})_0(z) = 2\sum_{m\geq 0} (zt^{-1})^m (X_+\oplus X_+)$, the first member of $\pi_h((d x_{+,-})_0(z))$ is :
$$2\sum_{m\geq 0} (h_+)^{-1}(zt^{-1}\Gamma^{-2})^m \begin{pmatrix} - (x^{+,-}(t)\phi^+(t\Gamma))_{\geq m}& (\phi^+(t\Gamma))_{> m} \\ - ((x^{+,-}(t))^2\phi^+(t\Gamma))_{\geq m}& (x^{+,-}(t)\phi^+(t\Gamma)))_{\geq m}\end{pmatrix}h_+ $$
$$+ \sum_{m\geq 0} (h_+)^{-1}(zt^{-1}\Gamma^{-2})^m (x^{+,-}(t)\phi^+(t\Gamma))_m H h_+.$$
By computing $\{d_{+,-}(z),d_{+,-}(w)\}$ we get :

$\phi^+(z\Gamma^{-1})\phi^+(w\Gamma^{-1}) \{d(x^{+,-})(z\Gamma^{-2}),d(x^{+,-})(w\Gamma^{-2})\}
\\= 2\sum_{m,m' \geq 0}w^{m'}z^m ( \phi^+(t\Gamma^{-1}) (2 x^{+,-}(t\Gamma^{-2}) (x^{+,-}(t\Gamma^{-2})\phi^+(t\Gamma^{-1}))_{\geq m}
\\- (x^{+,-}(t\Gamma^{-2})^2\phi^+(t\Gamma^{-1}))_{\geq  m} - x^{+,-}(t\Gamma^{-2})^2(\phi^+(t\Gamma^{-1}))_{> m}))_{m + m'}
\\ - 2\sum_{m,m'\geq 0} w^{m'}(zt^{-1})^m ((x^{+,-}(t\Gamma^{-2})\phi^+(t\Gamma^{-1}))_m \phi^+(t\Gamma^{-1})x^{+,-}(t\Gamma^{-2}))_{m + m'}
\\=2\sum_{m,m' \geq 0}w^{m'}z^m ( \phi^+(t\Gamma^{-1}) (- (x^{+,-}(t\Gamma^{-2}) (x^{+,-}(t\Gamma^{-2})\phi^+(t\Gamma^{-1}))_{< m})_{\geq m}
\\+ x^{+,-}(t\Gamma^{-2}) (x^{+,-}(t\Gamma^{-2})(\phi^+(t\Gamma^{-1}))_{\leq m})_{\geq m}))_{m + m'}
\\-2\phi^+(z\Gamma^{-1})\phi^+(w\Gamma^{-1})x^{+,-}(w\Gamma^{-2})x^{+,-}(z\Gamma^{-2})
\\=-2\phi^+(w\Gamma^{-1})x^{+,-}(z\Gamma^{-2})\phi^+(z\Gamma^{-1})\sum_{m,m' \geq 0}w^{m'}z^m (x^{+,-}(t\Gamma^{-2}))_{m + m'}
\\+ 2\phi^+(w\Gamma^{-1}) x^{+,-}(w\Gamma^{-2})\phi^+(z\Gamma^{-1}) \sum_{m,m'\geq 0}w^{m'}z^m (x^{+,-}(t\Gamma^{-2}))_{m + m'}
\\+2\sum_{m,m' \geq 0}w^{m'}z^m (\phi^+(t\Gamma^{-1})(x^{+,-}(t\Gamma^{-2}) (x^{+,-}(t\Gamma^{-2})\phi^+(t\Gamma^{-1}))_m))_{m + m'} \\-2\phi^+(z\Gamma^{-1})\phi^+(w\Gamma^{-1})x^{+,-}(w\Gamma^{-2})x^{+,-}(z\Gamma^{-2})
.$

As the last two terms cancel, we get the following expression equivalent to Formula (\ref{un}):
$$\{x^{+,-}(z),x^{+,-}(w)\}
=2(x^{+,-}(w) -x^{+,-}(z))\sum_{m,m'\geq 0}w^{m'}z^m x^-_{-m-m'}.
$$

By computing $\{d_{+,-}(z),d_+(w)\}$, we also get :

$\phi^+(z\Gamma^{-1}) \{d(x^{+,-})(z\Gamma^{-2}),d(h^+(w\Gamma^{-1}))\} - 2x^{+,+}(w) \{d_{+,-}(z),d_{+,-}(w)\}
\\= 4\sum_{m\geq 0,m' > 0}w^{m'} z^m  (-(x^{+,-}(t\Gamma^{-2})\phi^+(t\Gamma^{-1}))_{> m}
\\+ 2x^{+,-}(t\Gamma^{-2})(\phi^+(t\Gamma^{-1}))_{> m})_{m + m'} - 2x^{+,+}(w)\{d_{+,-}(z),d_{+,-}(w)\}(h)$

and

$\phi^+(z\Gamma) \{d(x^{+,-})(z),d(h^+(w))\}
\\= 4\sum_{m\geq 0,m' > 0}(\Gamma w)^{m'} (z\Gamma^2)^m ( - x^{+,-}(t\Gamma^{-2})(\phi^+(t\Gamma^{-1}))_{\leq m})_{m + m'}
\\= -4\phi^+(z\Gamma)\sum_{m\geq 0,m' > 0}(w\Gamma)^{m'} z^m x_{m+m'}^-,$
\\and so the following expression equivalent to Formula (\ref{deux}) :
$$\{x^{+,-}(z),h^+(w)\} = -4\sum_{m\geq 0,m' > 0}w^{m'} z^m x_{-m-m'}^-\Gamma^{m'}.$$

Then by computing $\{d_{+,-}(z),d_{+,+}(w)\}$ we get :

$\phi^+(z\Gamma^{-1}) \{d(x^{+,-})(z\Gamma^{-2}) ,d(x^{+,+})(w) + x^{+,+}(w) d(h^+)(w\Gamma^{-1})
\\- \phi^+(w\Gamma^{-1}) (x^{+,+}(w))^2 d(x^{+,-})(w\Gamma^{-2})\}
\\
= 2\sum_{m\geq 0,m' > 0}w^{m'}z^m ((\phi^+)^{-1}(t\Gamma^{-1})(\phi^+(t\Gamma^{-1}))_{ > m})_{m + m'}
\\+ x^{+,+}(w) (\{d_{+,-}(z),d_+(w)\}(h)
+ x^{+,+}(w)^2\{d_{+,-}(z),d_{+,-}(w)\}(h))
$

and

$\phi^+(z\Gamma) \{x^{+,-}(z) , x^{+,+}(w)\}
\\=-2\sum_{m\geq 0,m' > 0}w^{m'}(\Gamma^2 z)^m ((\phi^+)^{-1}(t\Gamma^{-1})(\phi^+(t\Gamma^{-1}))_{\leq m})_{m + m'}$
\\and so the following expression equivalent to Formula (\ref{trois}) :
$$\{x^{+,-}(z) ,x^{+,+}(w)\}
= - 2\sum_{m\geq 0,m' > 0}w^{m'}(\Gamma^2 z)^m (\phi^+(t\Gamma^{-1}))_{m + m'}.$$

By computing $\{d_{+,-}(z),dk\}$ we get :
$$k^{-1}\phi^+(z\Gamma^{-1})\{d(x^{+,-}(z\Gamma^{-2})),dk\} = -2\sum_{m\geq 0}z^{-m}\Gamma^{-m}[\phi^+(t\Gamma)x^{+,-}(t)]_m$$
$$ = -2\phi^+(z\Gamma^{-1})x^{+,-}(z\Gamma^{-2})$$
and so Formula (\ref{quatre}) follows.

We have a Poisson algebra isomorphism. Now we prove the compatibility with the Hopf algebra structure. First in $\CC[H]$ we get the formulas :
$$\Delta(x_0^+) = 1\otimes x_0^+ + x_0^+\otimes k\text{ , }
\Delta(x_0^-) = x_0^-\otimes 1 + k^{-1}\otimes x_0^-,$$
$$\Delta(x_1^- k^{-1}) = (x_1^-k^{-1})\otimes k^{-1} + 1\otimes (x_1^- k^{-1})
\text{ , }\Delta(x_{-1}^+ k) = k\otimes (x_{-1}^+k) + (x_{-1}^+k)\otimes 1,$$
$$S(x_0^+) = - k^{-1} x_0^+\text{ , }S(x_0^-) = - k x_0^-\text{ , }S(x_1^- k^{-1}) = -x_1^-\text{ , }S(x_{-1}^+ k) = - x_{-1}^+.$$
But in $Z(\hat{\mathfrak{sl}_2})$, $x_0^+$, $x_0^-$, $x_1^-k^{-1}$, $x_{-1}^+k$ correspond to the Chevalley generators. As we have an isomorphism of Poisson algebras, they are also generators in $\CC[H]$ and we can conclude with Lemma \ref{isom}.
\qed

Remark: if we replace $SL_2$ by $GL_2$, we get an isomorphism with $\U_\epsilon(\hat{\mathfrak{gl}_2})$ instead of $\U_\epsilon(\hat{\mathfrak{sl}_2})$ where $\U_\epsilon(\hat{\mathfrak{gl}_2})$ is defined for example as in \cite[Section 3]{fr}. In this case additional coordinates $k_2$, $h_{2,r}$ are obtained from the coefficients of $\text{exp}(h_\pm^0)$.

\subsection{Symplectic leaves and elliptic curves} In this Section we study the symplectic leaves of $H$ and we see how they are related to $G$-bundle on $\mathcal{E}$.

Consider the map
$$\gamma : H\rightarrow \hat{SL_2}\text{ , }\gamma(a,b) = a^{-1}b.$$
The image of $\gamma$ consists of the elements $f\in \hat{SL_2}$ satisfying $n_1(f) = n_2(f) = 0$ and such that $f_-(\infty)f_+(0)$ lies in the big cell (here $n_1$, $n_2$, $f_+$, $f_-$ have been defined in Theorem \ref{decompb}). $\gamma$ is a $2$ to $1$ covering of its image. Indeed for $f^\pm\in\hat{SL_2}^{\pm}$ without constant term, $A\in SL_2$ in the big cell, $\Lambda,\Gamma\in\CC^*$, we have
$$\gamma^{-1}((f_+^{-1} A f_-,\Lambda^2,\Gamma^2))$$
$$= \{((\epsilon_1 B^{-1}f_+,\epsilon_2\Lambda^{-1},\epsilon_3\Gamma^{-1}),(\epsilon_1C f_-,\epsilon_2\Lambda,\epsilon_3\Gamma))|\epsilon_1,\epsilon_2,\epsilon_3\in(\ZZ/2\ZZ)^3\},$$
where
$$B = \begin{pmatrix} \beta & 0\\ A_{2,1}\beta^{-1}& \beta^{-1}\end{pmatrix}\text{ , }C = \begin{pmatrix} \beta & A_{1,2}\beta^{-1} \\ 0 & \beta^{-1} \end{pmatrix},$$
and $\beta, -\beta$ are the square roots of $A_{1,1}$.

We have the following situation :
$$\hat{SL_2}\overset{\gamma}{\leftarrow} H \rightarrow \tilde{SL_2}\rightarrow K\setminus \tilde{SL_2},$$
and let us denote $p:H\rightarrow K\setminus \tilde{SL_2}$ the restriction to $H$ of the projection to $K\setminus \tilde{SL_2}$. From Proposition \ref{symp} we have :

\begin{prop} The symplectic leaves of $H$ are the connected components of the preimages under $\gamma$ of conjugacy classes in $\hat{SL_2}$. \end{prop}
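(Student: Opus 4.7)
The plan is to deduce this proposition from the general statement in Proposition~\ref{symp} applied to the Manin triple $(\tilde{SL_2},H,K)$ constructed above, by identifying the quotient $K\setminus \tilde{SL_2}$ with $\hat{SL_2}$ in a way that converts the right $K$-action on the quotient into conjugation and converts the projection $p$ into the map $\gamma$.

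First I would note that since $K=\{(F,F)\mid F\in \hat{SL_2}\}$ is the diagonal subgroup of $\tilde{SL_2}=\hat{SL_2}\times\hat{SL_2}$ acting by left multiplication, the map $(a,b)\mapsto a^{-1}b$ descends to a bijection $\varphi:K\setminus\tilde{SL_2}\xrightarrow{\sim}\hat{SL_2}$, with inverse $c\mapsto [(1,c)]$. Next I would transport the right action of $K$ on $K\setminus\tilde{SL_2}$ through $\varphi$: for $(F,F)\in K$ and a class represented by $(a,b)$, one has $\varphi([(a,b)(F,F)])=\varphi([(aF,bF)])=(aF)^{-1}(bF)=F^{-1}(a^{-1}b)F$. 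Hence the transported $K$-action on $\hat{SL_2}$ is precisely the conjugation action, so $K$-orbits in $K\setminus\tilde{SL_2}$ correspond bijectively to conjugacy classes in $\hat{SL_2}$.

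Then I would observe that the composition of the inclusion $H\hookrightarrow \tilde{SL_2}$ with the projection onto $K\setminus\tilde{SL_2}$ and with $\varphi$ is exactly the map $\gamma$, since for $h=(a,b)\in H$ we have $\varphi(p(h))=a^{-1}b=\gamma(h)$. Applying Proposition~\ref{symp} to the Manin triple $(\tilde{SL_2},H,K)$ therefore identifies the symplectic leaves of $H$ with the connected components of the preimages under $\gamma$ of the conjugacy classes in $\hat{SL_2}$, which is the claim.

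The only technical point that needs care is the connectedness hypothesis in Proposition~\ref{symp}, since we computed earlier that $H\cap K\cong (\mathbb Z/2\mathbb Z)^{n+2}$ and the inclusions $LG^{0,\pm}\subset LG$ are via connected components. I would handle this by restricting to the identity components throughout (the constructions of $H$ and $K$ and the Manin triple are clearly compatible with this restriction), so that Proposition~\ref{symp} applies verbatim, and then observe that the finite disconnectedness at most permutes the preimages but does not affect the connected-component description of symplectic leaves. This is really the only step requiring attention; the rest of the argument is formal manipulation of the diagram already displayed above the statement.
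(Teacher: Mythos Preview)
Your proposal is correct and follows essentially the same approach as the paper: the paper's proof is a one-line invocation of Proposition~\ref{symp}, with the identification $K\setminus\tilde{SL_2}\cong\hat{SL_2}$ and the correspondence between $p$ and $\gamma$ already implicit in the diagram displayed immediately before the statement. Your write-up simply makes explicit the computation that right $K$-action becomes conjugation under $(a,b)\mapsto a^{-1}b$; note that your caution about connectedness is unnecessary here, since $\hat{SL_2}$, $K$, and $H$ are all connected (recall $SL_2$ is simply connected so $LSL_2=LSL_2^0$, and the extensions preserve connectedness), so Proposition~\ref{symp} applies directly.
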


The value of $\Gamma$ is preserved along the symplectic leaves of $H$. As for $GL_1$, let us consider the generic case : $|\Gamma|\neq 1$. We suppose that $\Gamma$ is fixed (note that for the case $|\Gamma| = 1$, which is more complicated and not treated in the present paper, the classification of $q$-difference equations which is closely related to the classification of conjugacy orbits is discussed in \cite{dv}).

As mentioned before, several subgroups of $H$ are relevant from the representation theoretical point of view. Notably the subgroup $H_{hol}$ (resp. $H_{rat}$, $H_{mer}$, $H_{pos}$) of holomorphic maps on $\CC^*$ (resp. of  rational maps, of meromorphic maps, of germs well defined at $0$). We have corresponding symplectic leaves and analog subgroups of $\hat{SL_2}$ : $\hat{SL_2}_{hol}$, $\hat{SL_2}_{rat}$, $\hat{SL_2}_{mer}$, $\hat{SL_2}_{pos}$.

As an illustration let us concentrate on $\hat{SL_2}_{hol}$ (the other cases are analog, but additional analytic results should be proved to treat them with the same precision; for example more general analytic situations are considered in \cite{dv}).

\begin{thm}\label{sl} The holomorphic symplectic leaves are parameterized by :
$$\{\text{Isomorphism classes of holomorphic $SL_2$-bundles on }\mathcal{E}\}\times (\ZZ/2\ZZ)\times \CC^*.$$
\end{thm}

\demo The term $(\ZZ/2\ZZ)$ comes from the covering for the upper left coefficient of the matrix, and the term $\CC^*$ for the coefficient corresponding to $\Lambda$. Then the intersection of a conjugacy class in $\hat{SL_2}$ with $\hat{SL_2}_{hol}$ is equal to the conjugacy class in $\hat{SL_2}_{hol}$. Indeed from a relation $f(z\Theta) = h(z) f(z)g(z)^{-1}$ where $g, h$ are holomorphic on $\CC^*$, it is clear that $f$ is holomorphic on $\CC^*$ as $|\Gamma| \neq 1$. So the result follows from Theorem \ref{hlg}.
\qed

We do not describe here explicitly the symplectic leaves as for the $GL_1$ case, but general results for generic holomorphic leaves will be given in Section \ref{gent}. Let us give a few more precise results which can be proved in this case.

An element of $\hat{SL_2}$ is said to be lower triangular (resp. upper triangular, diagonal) if its first term has values in lower triangular (resp. upper triangular, diagonal) matrices. An element of $\hat{SL_2}$ is said to be constant if its first term is a constant germ. A symplectic leaf is said to be diagonal if its image by $\gamma$ is contained in a conjugacy class which contains a diagonal element (we use the same terminology for conjugacy classes). In the following, for $\alpha$ a germ of holomorphic map from the punctured disc to $\CC^*$ we denote $D_\alpha = \text{diag}(\alpha,\alpha^{-1})$. Consider $\mathcal{E}' = \mathcal{E}/(z\sim z^{-1})$.

\begin{lem}\label{param} The diagonal symplectic leaves are parameterized by $\mathcal{E}'\times (\ZZ/2\ZZ)$. A generic conjugacy class in $\hat{SL_2}$ containing a lower (resp. upper) triangular $f$ such that $n_1(f) = n_2(f) = 0$ is diagonal.
\end{lem}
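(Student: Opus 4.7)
The plan is to prove the two assertions of the lemma separately, and the main idea of each is the following.

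For the parameterization of diagonal symplectic leaves, I would first analyze twisted conjugation restricted to the diagonal subgroup of $\hat{SL_2}$. The direct computation
$$D_\beta(\Theta z)\,D_\alpha(z)\,D_\beta(z)^{-1}=D_{\beta(\Theta z)\beta(z)^{-1}\alpha(z)}$$
shows that the orbit of $D_\alpha$ under twisted conjugation by diagonal loops depends only on the class of $\alpha$ in $L\CC^*$ modulo coboundaries $\beta(\Theta z)/\beta(z)$, and these coboundaries are exactly the trivial classes in $\text{Pic}^0(\mathcal{E})=\mathcal{E}$. Next, conjugating by the constant Weyl representative $w=\bigl(\begin{smallmatrix}0 & -1\\ 1 & 0\end{smallmatrix}\bigr)$ yields $D_\alpha\mapsto D_{\alpha^{-1}}$, which matches the involution $z\mapsto z^{-1}$ on $\mathcal{E}$. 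For generic $\alpha$ (avoiding the constant loops $\pm 1$) the stabilizer of $D_\alpha$ in $SL_2$ is exhausted by the normalizer of the torus, so two generic diagonal elements are $\hat{SL_2}$-twisted conjugate precisely when their images in $\mathcal{E}'=\mathcal{E}/(z\sim z^{-1})$ coincide, giving the bijection between generic diagonal conjugacy classes and $\mathcal{E}'$. Passing from conjugacy classes in $\hat{SL_2}$ to symplectic leaves in $H$ via the covering $\gamma$ of the previous subsection introduces the residual sign $\epsilon_1$ (after $\Lambda$ and $\Gamma$ have been fixed on the leaf), producing the extra $\ZZ_2$ factor.

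For the second assertion, I would write a lower triangular $f$ with $n_1(f)=n_2(f)=0$ as
$$f=\begin{pmatrix}\alpha & 0 \\ \beta & \alpha^{-1}\end{pmatrix},$$
with $\alpha$ a germ of invertible holomorphic function, and attempt to diagonalize $f$ by twisted conjugation with a lower unipotent $u=\bigl(\begin{smallmatrix}1 & 0\\ \gamma & 1\end{smallmatrix}\bigr)$. A direct matrix calculation yields
$$u(\Theta z)\,f(z)\,u(z)^{-1}=\begin{pmatrix}\alpha & 0\\ \alpha(z)\gamma(\Theta z)-\alpha(z)^{-1}\gamma(z)+\beta(z) & \alpha^{-1}\end{pmatrix},$$
so diagonalization reduces to solving the scalar linear $q$-difference equation
$$\gamma(\Theta z)-\alpha(z)^{-2}\gamma(z)=-\beta(z)\alpha(z)^{-1}$$
for $\gamma$. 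For generic $\alpha$, and in particular when $\alpha(0)^2$ avoids the resonance lattice $\Theta^{\ZZ}$, this equation admits a solution of the appropriate analytic class by the standard theory of $q$-difference equations recalled in Section~\ref{manin}. The resulting $u$ then lies in $LSL_2$ and conjugates $f$ to the diagonal element $D_\alpha$; the upper triangular case is entirely symmetric.

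The main obstacle is the analytic step in the second part: controlling the class of the solution $\gamma$ of the $q$-difference equation and making rigorous the notion of "generic" by describing the resonance locus for $\alpha$ explicitly, so that $u$ actually belongs to $LSL_2$. The assumption $|\Gamma|\neq 1$ (equivalently $|\Theta|\neq 1$), which prevents accumulation of the resonances, is the key ingredient here. Once this is in hand, the first assertion follows from routine bookkeeping with Looijenga's theorem (Theorem~\ref{hlg}) and the explicit description of the fibers of $\gamma$ given just before the statement of the lemma.
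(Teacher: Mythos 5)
Your overall route is the same as the paper's: reduce diagonal elements modulo coboundaries $\beta(\Theta z)/\beta(z)$, use the Weyl element to get the inversion and hence $\mathcal{E}'$, extract the $\ZZ_2$ from the covering $\gamma$, and prove the second statement by killing the lower-left entry through a unipotent twisted conjugation, which is exactly the scalar $q$-difference equation appearing in the paper. However, the analytic step you yourself flag as "the main obstacle" is a genuine gap, and it cannot be closed by appealing to a "standard theory of $q$-difference equations recalled in Section \ref{manin}" --- no such solvability theory is recalled there, and your equation $\gamma(\Theta z)-\alpha(z)^{-2}\gamma(z)=-\beta(z)\alpha(z)^{-1}$ has a non-constant coefficient $\alpha(z)^{-2}$ (a germ on the punctured disc, which need not even have a value at $0$), so it does not decouple into Fourier modes. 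The paper's fix is precisely the step you omit: first conjugate by diagonal loops, which preserve lower triangularity, to make the diagonal entry constant --- this is the same computation as in Proposition \ref{sympl}, valid because $\Gamma$ is not a root of unity, with the genericity condition that the constant is not a power of $\Gamma^2$. After that the equation has constant coefficient, each Fourier mode is solved by dividing by $A_{1,1}\Gamma^{2n}-A_{1,1}^{-1}\Gamma^{-2n}\neq 0$, and convergence is the same estimate as in the $GL_1$ case since $|\Gamma|\neq 1$.

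In the first statement there is a second, smaller gap. You assert that a conjugator between generic diagonal elements must lie in the normalizer of the torus and that the $\ZZ_2$ appears "after $\Lambda$ and $\Gamma$ have been fixed on the leaf"; but the bulk of the paper's proof is exactly the verification you skip: a case analysis of the possible conjugators, forcing them to be diagonal or antidiagonal up to powers of $z$, together with a check that these conjugations do not shift the central-extension coordinate $\Lambda$. The latter is not automatic --- in the $GL_1$ case twisted conjugation does change $\Lambda$ (see the formula for $\Lambda''$ in the proof of Proposition \ref{sympl}) --- and it is what makes $\Lambda=\Lambda'$ a necessary condition in the paper's classification of diagonal classes. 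Likewise, your identification of diagonal orbits with $\mathrm{Pic}^0(\mathcal{E})=\mathcal{E}$ is only legitimate after germs have been reduced to constants (again the $GL_1$ computation), since a germ on the punctured disc does not directly define a cocycle, hence a line bundle, on $\mathcal{E}$. So the skeleton of your argument matches the paper, but both the solvability of the $q$-difference equation and the central-extension bookkeeping need to be supplied along the lines above.
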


\demo We prove that for $\alpha,\alpha'\in\CC^*$, $\Lambda,\Lambda'\in\CC^*$, the elements $(D_{\alpha},\Lambda,\Theta)$ and $(D_{\alpha'},\Lambda',\Theta)$ are in the same conjugacy class if and only if $\lambda = \lambda'$ and $\alpha\in (\alpha' \Theta^{\ZZ})\cup ((\alpha')^{-1}\Theta^{\ZZ})$.

First we look at the if part. From conjugation by $\begin{pmatrix}0& 1\\-1 & 0\end{pmatrix}$, it suffices to consider the case $\alpha\in \alpha' \Theta^{\ZZ}$. This case follows from conjugation by $D_z$ (the central extension part is not modified for such elements).

Now suppose that $(D_{\alpha},\Lambda,\Theta)$
and $(D_{\alpha'},\Lambda',\Theta)$ are conjugated by the element $(A,1,1)$ (we can suppose that the last two term are equal to $1$).

Suppose that $A_{2,1} = 0$. Then we have $\alpha A_{1,1}(z) A_{1,1}^{-1}(z\Theta) = \alpha'$. So $A_{1,1}(z) = \gamma z^n$ where $\gamma\in\CC^*$, $n\in\ZZ$ and we have $\alpha = \Theta^{n} \alpha'$. Let us look at the central extension term. We have $-\alpha \gamma z^n A_{1,2}(z\Theta) + \alpha^{-1} A_{1,2}(z) \gamma z^n \Theta^{n} = 0$. If $A_{1,2}(z) = 0$ the result follows from the discussion on the "if" part. If $A_{1,2}(z)\neq 0$, then $A_{1,2}(z) = \delta z^m$ where $\alpha^2 = \Theta^{(n - m)}$. But by the discussion for the "if" part, we can suppose that $\alpha^2 = 1$. So $n = m$ and $A = D_{z^n}\begin{pmatrix}\gamma & \delta\\ 0 & \gamma^{-1}\end{pmatrix}$. So the involved multiplications do not change the central charge and the result follows. The case $A_{1,2} = 0$ is treated in the same way.

Suppose that $A_{2,2} = 0$. Then we have $\alpha^{-1} A_{1,2}(z) A_{1,2}^{-1}(z\Theta) = \alpha'$. So $A_{1,2}(z) = \gamma z^n$ where $\gamma\in\CC^*$, $n\in\ZZ$ and we have $\alpha^{-1} = \Theta^{n} \alpha'$. Let us look at the central extension term. We have $-\alpha \gamma z^n A_{1,1}(z)\Theta^{n} + \alpha^{-1} A_{1,1}(z\Theta) \gamma z^n = 0$. If $A_{1,1} = 0$ we can conclude as above. If $A_{1,1}(z)\neq 0$, then $A_{1,1}(z) = \delta z^m$ where $\alpha^2 = \Theta^{(m - n)}$. As above, we can suppose that $\alpha^2 = 1$. So $n = m$ and $A = D_{z^n}\begin{pmatrix}\delta & \gamma\\ -\gamma^{-1} & 0\end{pmatrix}$. So the involved multiplications do not change the central charge and the result follows. The case $A_{1,1} = 0$ is treated in the same way.

Suppose that all coefficients of $A$ are non zero. From the relation
$$\alpha^2 A_{2,1}(z)A_{2,2}(\Theta) = A_{2,1}(z\Theta)A_{2,2}(z)$$
we get the existence of $n\in\ZZ$, $\gamma\in\CC^*$ such that $\alpha^2 = \Theta^{n}$ and $\gamma A_{2,1}(z) = A_{2,2}(z)z^n$. Then as above we can suppose that $\alpha^2 = 1$ and so $n = 0$. As above $\gamma' A_{1,2}(z) = A_{1,1}(z)$. Moreover $1 = A_{1,2}(z)A_{2,1}(z)(\gamma\gamma' - 1)$. Then we have $\alpha ' = \alpha\gamma\gamma'A_{1,2}(z) A_{2,1}(z\Theta)  - \alpha^{-1}A_{1,2}(z) A_{2,1}(z\Theta)$. As $\alpha = \alpha^{-1}$, we get $\alpha' = \alpha\frac{A_{1,2}(z)}{A_{1,2}(z\Theta)}$. So $\alpha' = \Theta^{M}$ and $A_{1,2}(z) = \delta z^M$. For the central extension term, we notice that
$$A = D_{z^M}\begin{pmatrix} \gamma'\delta& \delta\\ \delta^{-1}/(\gamma\gamma' - 1)&\gamma \delta^{-1}/(\gamma\gamma' - 1) \end{pmatrix},$$
and so we can conclude as above.

Let us prove the second statement. In a generic situation we can suppose that the constant part of the diagonal coefficients are not powers of $\Gamma^2$. We suppose that we have a lower triangular element $(A,\lambda,\Gamma^2)$ (the other case is analog). By conjugating by diagonal elements, we can see as in the proof of Proposition \ref{sympl} that we can suppose that $A_{1,1}$ is constant and not power of $\Gamma^2$. Then by conjugating by lower triangular elements with $1$ on the diagonal, as in the proof of Proposition \ref{sympl} we can suppose that $A_{2,1}$ is constant. The term $(A_{1,1} \Gamma^{2n}-A_{1,1}^{-1}\Gamma^{-2n})$ appears instead of $(\Gamma^{2n}-\Gamma^{-2n})$. As this term is never equal to $0$, we can choose $A_{2,1} = 0$.
\qed

So combined with Lemma \ref{param}, the generic triangular symplectic leaves are parameterized by $\mathcal{E}'\times (\ZZ/2\ZZ)$.

By this Lemma, to prove that an element $(A,\Lambda,\Theta)$ has a diagonal conjugacy class, it suffices to solve the following $q$-difference equation :
$$-A_{2,1} g(\Theta z)g(z) - A_{1,1}g(\Gamma^2 z) + A_{2,2} g(z) + A_{1,2} = 0.$$
(We refer to \cite{dvrsz, s} for general results on $q$-difference equations). Note that this equation is equivalent to an equation of the form $G(z\Theta)(1+G(z)) = \alpha(z)$.

\section{Geometric realization of the center}\label{gent}

Let us consider the case of a Lie group $G$ of arbitrary type. In addition to the coordinates for $\Lambda,\Gamma$ corresponding to the extensions, we define coordinates for the analytic loop group $L G$ corresponding to Drinfeld generators. This is totally analog to the case $SL_2$ studied above and the formula can be uniformly written in terms of root vectors of the affine Lie algebra. As the formula for these coordinates are explicitly written in \cite[(5.2.2), (5.2.4)]{bk}, we refer to them. We work with $\CC[H]$ the ring of maps which are polynomial in these coordinates. We have a structure of Poisson algebra on $\CC[H]$. It gives a geometric realization of $Z_\epsilon$ :

\begin{thm}\label{isomg} $\CC[H]$ is isomorphic to $Z_\epsilon$ as a Hopf Poisson algebra.
\end{thm}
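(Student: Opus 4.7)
The plan is to mirror the strategy used for $\widehat{SL_2}$ in Theorem \ref{isomt} and reduce everything to rank one and rank two computations, exploiting the locality of both the Drinfeld presentation of $\U_q(\hat{\Glie})$ and the Gauss-type decomposition of $H$. First I would introduce coordinate functions on $H$ adapted to the Drinfeld generators: using an iterated Gauss/Birkhoff factorization inside each analytic $SL_2^{(i)}$ subgroup attached to a simple root $\alpha_i$ (paralleling Theorem \ref{decompb} and the decomposition written just before the proof of Theorem \ref{isomt}), one obtains well-defined regular functions $x_{i,m}^{\pm}$, $h_{i,r}$, $k_i^{\pm 1}$, $\Lambda^{\pm 1}$, $\Gamma^{\pm 1}$ on (a dense open part of) $H$. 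These coincide with the functions already written down in \cite[(5.2.2), (5.2.4)]{bk}. The ring $\CC[H]$ is by construction polynomial in these generators, with the $k_i$, $\Lambda$, $\Gamma$ inverted, so as a commutative ring it is abstractly isomorphic to $Z_1$ via the obvious map $\Psi$ sending coordinate to generator.

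Next I would invoke Lemma \ref{isom}: it is enough to check that $\Psi$ intertwines the coproduct, antipode and Poisson bracket on a set of Poisson-algebra generators. Using the last paragraph of Section \ref{defqaa}, the Chevalley generators $E_i, F_i, K_i^{\pm 1}, \Lambda^{\pm 1}$ generate $Z_1$ as a Poisson algebra; consequently so do the Drinfeld generators $x_{i,m}^\pm, h_{i,r}, k_i^{\pm 1}, \Lambda^{\pm 1}, \Gamma^{\pm 1}$. For the Hopf structure, the relevant coproducts and antipodes on these generators are known in closed form (the $SL_2$ formulas recalled before Lemma \ref{linv} apply on each simple root subalgebra), and one verifies the identities exactly as at the end of the proof of Theorem \ref{isomt}, where it sufficed to check finitely many Chevalley-type elements.

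The substantive work is the Poisson bracket. I would again translate the problem, via Lemma \ref{linv} generalized to each simple-root $SL_2^{(i)}$, into the computation of left-invariant forms at the identity and then pair them using the Manin-triple form. The brackets decompose into three families. First, brackets involving $\Lambda, \Gamma, k_i$ are handled directly and coincide with the semiclassical relations because these coordinates depend only on the abelian factor. Second, brackets among the $h_{i,r}$'s and between $h_{i,r}$ and $\Lambda$ reduce, index by index, to the Heisenberg computation already carried out in Theorem \ref{isom1} (with symmetrized Cartan matrix entries $B_{i,j}$ appearing through the bilinear form $B$). Third, brackets of the form $\{x_{i,m}^{\pm}, x_{j,m'}^{\pm}\}$ and $\{x_{i,m}^{+}, x_{j,m'}^{-}\}$: when $i=j$ these are exactly the computations of Theorem \ref{isomt}, and when $i \neq j$ only the rank-two subgroup $H_{\{i,j\}} \subset H$ attached to the pair $\{i,j\}$ contributes, because the left-invariant forms built from $x_{i,\bullet}, x_{j,\bullet}, h_{i,\bullet}, h_{j,\bullet}$ lie in the part of $\Hlie^{*}$ detected only by $\mathfrak{g}_{\{i,j\}}$ under the pairing.

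The main obstacle is therefore the rank-two case when $\mathfrak{g}_{\{i,j\}}$ is of type $A_2$, $B_2$ or $G_2$: one must verify that the Poisson analogues of the quantum Serre relations and of the Drinfeld cross-relations between $x_{i,m}^{\pm}$ and $x_{j,m'}^{\pm}$ are produced by the Manin-triple bracket on $H_{\{i,j\}}$. I would treat this by the same left-invariant-form strategy as in Theorem \ref{isomt}, expanding the Gauss decomposition of an element of $H_{\{i,j\}}$ along the root vectors $x_{\pm\alpha_i + m\delta}$, $x_{\pm\alpha_j + m\delta}$ and the rank-two positive real roots, and comparing the resulting generating series of brackets with the semiclassical limit of the Drinfeld relations of $\U_q(\hat{\mathfrak{g}}_{\{i,j\}})$. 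Once those identities are established, the combination with the already-treated rank-one pieces and with the Hopf verification furnishes, via Lemma \ref{isom}, the desired Poisson Hopf isomorphism $\Psi : \CC[H] \xrightarrow{\sim} Z_1$; composing with the Frobenius isomorphism of Proposition \ref{frg} gives the isomorphism $\CC[H] \cong Z_\epsilon$ stated in the theorem.
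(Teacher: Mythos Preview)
Your overall plan --- introduce Drinfeld-type coordinates on $H$, obtain a ring isomorphism $\Psi:\CC[H]\to Z_1$, and then use Lemma \ref{isom} together with a reduction to low rank --- matches the paper's strategy. The Hopf-structure part and the Heisenberg/affine $\mathfrak{sl}_2$ parts are handled exactly as you describe.

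The point of divergence is the rank-two reduction. You propose to treat brackets with $i\neq j$ by restricting to the \emph{affine} rank-two subgroup $H_{\{i,j\}}$ attached to $\Glie_{\{i,j\}}$ of type $A_2,B_2$ or $G_2$, and then redo the left-invariant-form computation of Theorem \ref{isomt} in that setting. This is in principle feasible, but it is genuinely new work that you do not carry out, and it is substantially heavier than the $\hat{\mathfrak{sl}}_2$ computation (three, four or six positive finite roots, plus loop and Serre relations). The paper avoids this entirely. It uses the reduction argument of \cite[Section 5.3]{bk}, whose key root-theoretic observation is that for any real affine root $\alpha\in\Delta_++\NN\delta$ and any finite simple root $\alpha_i$, the pair $(\alpha,\alpha_i)$ either lies in a single affine $\mathfrak{sl}_2$ (namely $\alpha\in\alpha_i+\NN\delta$) or generates a subroot system of \emph{finite} type. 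Consequently all the needed brackets reduce either to the affine $\mathfrak{sl}_2$ case (Theorem \ref{isomt}), the Heisenberg case (Theorem \ref{isom1}), or the \emph{finite} rank-two case already established in \cite{dckp}. No affine rank-two computation is required.

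So your proof has a genuine gap at the step ``verify the affine rank-two brackets'', and the fix is not to carry out that computation but to sharpen the reduction: work with the full system of root-vector coordinates $x_\beta$, $\beta\in\tilde{\Delta}^{re}$ (as in Theorem \ref{bkcenter} and \cite[(5.2.2),(5.2.4)]{bk}), and invoke the dichotomy above so that the only new input beyond \cite{dckp} is precisely Theorems \ref{isom1} and \ref{isomt}.
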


\demo
Although as explained above the point of view used in \cite{bk} is different from the point of view developed in the present paper, the rank $2$ reduction argument of \cite{bk} makes perfectly sense in our situation. In fact the argument used in \cite[Section 5.3]{bk} is the following : the main point is that for $\alpha \in \Delta_+ + \NN\delta$ and $\alpha_i$ a simple root ($i\neq 0$), either $\alpha\in\alpha_i + \NN \delta$ or $(\alpha,\alpha_i)$ generates a subroot system of finite type. Then we have a reduction to finite rank $2$ case (treated in \cite{dckp}) and to the affine $\mathfrak{sl}_2$ and $\mathfrak{gl}_1$ cases which we have studied respectively in Theorem \ref{isomt} and Theorem \ref{isom1}.
\qed

As a consequence of Looijenga Theorem, we have as above :

\begin{thm}\label{gsl}
The holomorphic symplectic leaves are parameterized by :
$$\{\text{Isomorphism classes of holomorphic $G$-bundles on }\mathcal{E}\}\times (\mathbb Z/2\mathbb Z)^n\times\CC^*.$$
\end{thm}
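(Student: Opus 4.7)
The plan is to follow the same strategy used in the $SL_2$ case (Theorem \ref{sl}), now in full generality, by combining Proposition \ref{symp} with Looijenga's parametrization (Theorem \ref{hlg}).

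First, I would define the covering map $\gamma : H \to \hat G$ by $\gamma(a,b) = a^{-1}b$, exactly analogous to the $GL_1$ and $SL_2$ settings. One verifies that its image is exactly the subset of $\hat G$ consisting of elements $(f,\Lambda^2,\Gamma^2)$ such that $f$ admits a Birkhoff factorization with trivial $\lambda$ and with $f_-(\infty)f_+(0)$ in the big cell of $G$, and that over this image $\gamma$ is a Galois covering whose deck group is $H \cap K \cong (\ZZ/2\ZZ)^{n+2}$; this follows by counting square-root choices along the $n$-dimensional Cartan $D$ together with the two central parameters $\Lambda$ and $\Gamma$.

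Next, I would invoke the commutative diagram
\[
\begin{CD}
H @>{i}>> \tilde G \\
@V{\gamma}VV @V{p}VV \\
\hat G @<{j}<< K \setminus \tilde G
\end{CD}
\]
already used in the $GL_1$ and $SL_2$ cases, where $j$ identifies $K$-cosets with $\hat G$ and translates the $K$-action on $K\setminus\tilde G$ into twisted conjugation on $\hat G$ (the twist $\Theta=\Gamma^4$ arising from the central parameters, exactly as in the $GL_1$ computation preceding Proposition \ref{sympl}). By Proposition \ref{symp}, the symplectic leaves of $H$ are the connected components of $\gamma^{-1}(\mathcal{O})$ as $\mathcal{O}$ runs over twisted conjugation orbits in $\hat G$.

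Fix now a generic $\Gamma$ with $|\Gamma|\neq 1$ and restrict to the holomorphic stratum, where the first coordinate lies in $G(\CC^*)_{hol}$. As in the proof of Theorem \ref{sl}, the condition $|\Gamma|\neq 1$ forces any intertwiner of a relation $f(z\Theta) = h(z)f(z)g(z)^{-1}$ with $g,h$ holomorphic on $\CC^*$ to itself be holomorphic on $\CC^*$, so the intersection of a twisted orbit in $\hat G$ with the holomorphic subgroup is a single twisted orbit in $G(\CC^*)_{hol}$. Theorem \ref{hlg} (Looijenga) then identifies these orbits with isomorphism classes of holomorphic $G$-bundles on the elliptic curve $\mathcal{E}$.

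It remains to extract the correct combinatorial factor. The $\Lambda$ coordinate is preserved by twisted conjugation and is a free $\CC^*$ parameter on each leaf; similarly $\Gamma$ is already fixed. In the Galois group $(\ZZ/2\ZZ)^{n+2}$ of $\gamma$, one $\ZZ/2$ factor is absorbed by having fixed $\Gamma$ and one by the continuous $\Lambda \in \CC^*$ (which already contains both signs), leaving precisely $n$ genuine $\ZZ/2$ sign ambiguities coming from square roots of the torus coordinates in $D$. Putting the three pieces together gives the parametrization
\[
\{\text{iso. classes of holomorphic $G$-bundles on }\mathcal{E}\}\times(\ZZ/2\ZZ)^n\times \CC^*.
\]
The main obstacle I expect is the last bookkeeping step: carefully disentangling which elements of the deck group $H\cap K$ are connected to the identity within a preimage $\gamma^{-1}(\mathcal{O})$ (and so merge leaves together) and which produce genuinely distinct connected components, in order to justify the count $(\ZZ/2\ZZ)^n\times \CC^*$ uniformly in $G$ rather than only in the rank-one case handled in Theorem \ref{sl}.
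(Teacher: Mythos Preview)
Your outline follows the same route as the paper: reduce via Proposition \ref{symp} and the map $\gamma$ to twisted conjugacy classes in $\hat G$, restrict to the holomorphic stratum using $|\Gamma|\neq 1$ exactly as in Theorem \ref{sl}, and then invoke Looijenga's Theorem \ref{hlg}. The paper's own argument is in fact terser than yours---it simply says ``as a consequence of Looijenga Theorem, we have as above'' and relegates the combinatorial factor to a remark.

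The only substantive difference is how the $(\ZZ/2\ZZ)^n$ is justified. You start from the deck group $H\cap K\cong(\ZZ/2\ZZ)^{n+2}$ and try to argue that two of the $\ZZ/2$ factors are absorbed, one by having fixed $\Gamma$ and one by the continuous $\Lambda\in\CC^*$. The paper instead asserts directly that $\gamma$ is a $2^n$-to-$1$ covering, by reducing to the finite-type situation in \cite{dcp}: a germ lying in both $LG^+$ and $LG^-$ extends holomorphically to all of $\mathbb{P}_1(\CC)$ and is therefore constant, so the ambiguity in the Riemann--Hilbert factorization is exactly the finite-type torus ambiguity, which is $(\ZZ/2\ZZ)^n$. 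This bypasses the connected-component bookkeeping that you correctly identify as the delicate point; your absorption argument for the $\Lambda$-sign (``$\CC^*$ already contains both signs'') is really a statement about which deck transformations lie in the identity component of a fiber, and making that precise uniformly in $G$ is exactly what the reduction to \cite{dcp} is designed to avoid.
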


Remark : the term $(\mathbb Z/2\mathbb Z)^n$ comes from the fact that $\gamma$ defines a $2^n$ to $1$ covering, as for the finite type in \cite{dcp} (this is deduced from the finite type as a germ of holomorphic map on the punctured disc which is in both part of the Riemann-Hilbert factorization is holomorphic on $\mathbb{P}_1(\CC)$ and so is constant).

In the case of $GL_1$ we have computed explicitly the generic symplectic leaves. In general we have for $G$ connected the following :

\begin{thm}\label{semis} A generic holomorphic symplectic leaf contains a constant element with value in $D$.
\end{thm}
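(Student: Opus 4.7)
The plan is to combine the Looijenga correspondence from Theorem \ref{hlg} with the classical classification of semi-stable holomorphic $G$-bundles on an elliptic curve. Recall from Theorem \ref{gsl} that, after quotienting by the finite $(\ZZ/2\ZZ)^n$-cover and separating the $\CC^*$-factor corresponding to $\Lambda$, a holomorphic symplectic leaf in $H$ determines a unique isomorphism class of holomorphic $G$-bundle on $\mathcal E$: namely, for $h$ in the leaf, the bundle attached via Theorem \ref{hlg} to the twisted conjugacy class of $\gamma(h)$ in the holomorphic loop group.

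First, I would reformulate the claim in bundle language. Given a constant element $d\in D\subset G$, viewed as a constant germ in $G(\CC^*)_{hol}$, the associated holomorphic bundle on $\mathcal E$ is the flat bundle $\CC^*\times G$ with $\Theta^{\ZZ}$-action $(z,g)\mapsto(\Theta z,dg)$, whose structure group is manifestly reduced to $D$. Hence proving that a generic symplectic leaf in $H$ contains a constant element with value in $D$ is equivalent to the statement that a generic holomorphic $G$-bundle on $\mathcal E$ admits a reduction of structure group to the maximal torus.

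Next, I would invoke the Looijenga--Friedman--Morgan--Witten description of the coarse moduli of semi-stable $G$-bundles on an elliptic curve (see \cite{a, fmw, l}). For a connected reductive $G$, the coarse moduli space of semi-stable, topologically trivial, holomorphic $G$-bundles on $\mathcal E$ is identified with a quotient of the form $(D\otimes_{\ZZ} H_1(\mathcal E,\ZZ))/W$, and in particular every such bundle admits a reduction to a flat $D$-bundle determined by a pair $(d_1,d_2)\in D\times D$, unique up to the Weyl group and a lattice action. Translating back through Looijenga's bijection, a generic twisted conjugacy class in $G(\CC^*)_{hol}$ is represented by a single constant element $d\in D$ (the residual ambiguity coming from changes of trivialization by one-parameter subgroups $\lambda\in\mathrm{Hom}(\CC^*,D)$ which shift $d$ by $\lambda(\Theta)$, exactly as in the $GL_1$ calculation of Proposition \ref{sympl}). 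To then lift this representative to $H$, I would mimic the construction used for $GL_1$ in Proposition \ref{sympl} and for $SL_2$ in Lemma \ref{param}: for such a constant $d\in D$ and any $\Lambda\in\CC^*$, set
$$h_{d,\Lambda}=((d^{-1},\Lambda^{-1},\Gamma^{-1}),(d,\Lambda,\Gamma))\in H,$$
so that $\gamma(h_{d,\Lambda})$ is (up to central data) the constant loop $d^{2}$. By Proposition \ref{symp} the preimage $\gamma^{-1}$ of the given twisted conjugacy class is a disjoint union of finitely many symplectic leaves, differing only by the $(\ZZ/2\ZZ)^n$-cover of Theorem \ref{gsl}, and $h_{d,\Lambda}$ sits in one of them; a further argument using the connectedness of $D$ and the action of the residual symmetries lets one arrange for $h_{d,\Lambda}$ to land in any prescribed component.

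I expect the main obstacle to be the second step, i.e., producing the reduction of a generic holomorphic $G$-bundle on $\mathcal E$ to $D$. For $GL_1$ this is vacuous; for $SL_2$ it was carried out explicitly in Lemma \ref{param} by solving the relevant $q$-difference equation $G(z\Theta)(1+G(z))=\alpha(z)$; and in general it relies on the full strength of the Looijenga--Friedman--Morgan--Witten moduli description of semi-stable $G$-bundles on an elliptic curve, together with the fact that the non-semi-stable locus is a proper closed subset of the moduli (hence avoided by generic leaves). The first and third steps are essentially formal consequences of the constructions already introduced in Sections \ref{defian}--\ref{sldeux}.
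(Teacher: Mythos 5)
Your proposal is correct in substance and rests on the same essential input as the paper, namely the structure theory of holomorphic $G$-bundles on an elliptic curve, but it reaches it by a slightly different citation path. The paper's own proof is a one-line reduction: by \cite[Cor. 3.5, Prop. 3.6]{efk} (which in turn rests on the results of Narasimhan--Seshadri and Ramanathan, \cite{ns, ra}), almost every element of the holomorphic loop group is twisted conjugate to a constant element with value in $D$, and the theorem then follows from the identification of symplectic leaves with connected components of $\gamma$-preimages of twisted conjugacy classes. You instead re-derive that loop-group statement by passing through Looijenga's bijection (Theorem \ref{hlg}) and the Looijenga--Friedman--Morgan--Witten description of the coarse moduli $(D\otimes_{\ZZ}H_1(\mathcal{E},\ZZ))/W$ of semistable bundles, and your explicit lift $h_{d,\Lambda}$ to $H$ is consistent with Proposition \ref{symp} and the covering discussion around Theorem \ref{gsl}, so nothing there is wrong. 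The one point where your route needs an extra word is that the coarse moduli space parameterizes S-equivalence classes, while twisted conjugacy classes correspond to isomorphism classes of bundles; to conclude that a generic class itself contains a constant $D$-valued representative you must add that a generic semistable bundle is regular, so that its S-equivalence class consists of a single isomorphism class induced from the torus. This regularity statement is exactly what the results cited in \cite{efk} supply, which is why the paper's direct citation short-circuits your second step; with that caveat filled in, your argument and the paper's are mathematically equivalent.
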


\demo As a consequence of general results in \cite{ns, ra} about vector bundles on elliptic curves, it is proved in \cite[Cor. 3.5, Prop. 3.6]{efk} in the holomorphic case that almost all elements are twisted conjugated to an constant element with value in $D$.
\qed

\section{Conclusion}\label{conc}

We can now come back to the general picture described in the introduction, and we have a proof for the correspondence :

\begin{equation*}
\begin{split}
&\text{ Isomorphism classes of $G$-bundle on an elliptic curve,}
\\ \leftrightarrow &\text{ (by Looijenga Theorem) Equivalence classes of $q$-difference equations,}
\\ \leftrightarrow &\text{ (by trivial change of variable) Twisted conjugation classes in loop groups,}
\\ \leftrightarrow &\text{ (by the double construction) Symplectic leaves in loop groups,}
\\ \leftrightarrow &\text{ (by isomorphism) Equivalence classes of central characters of $\U_\epsilon(\hat{\Glie})$.}
\end{split}
\end{equation*}

Although in the present paper we focused on the structure and the geometry of the relevant loop groups itself, as explained in the introduction we have in mind applications to the representation theory of quantum affine algebras at roots of unity which will be discussed in a separate publication.

Central characters of $\U_\epsilon(\hat{\Glie})$ classify irreducible representations up to a finite cover. A very rich representation theory occurs in this context : in general infinite dimensional representations do appear, as for example baby Verma modules and Wakimoto modules. This will be explained in more details in another paper, but as for the finite type case explained in Section \ref{finite}, we can say that the symplectic leaves correspond to the action of a group of automorphisms of the quantum affine algebra, and so the statement of Theorem \ref{sl} is interpreted as a parametrization of equivalence classes of certain representations by $G$-bundles on the elliptic curve $\mathcal{E}$.

Moreover let us consider the category of representations with a nilpotent action of the generators with negative degree ($E_0$ has a nilpotent action). Although in general the simple representations of this category are not finite dimensional, we have a well-defined notion of natural graded character in this category. As in the finite dimensional case we expect that all irreducible representations for generic central characters will have the same graded dimensions.  Note that the holomorphic symplectic leaves studied above will give representations in this category. For such a representation we have a top irreducible component for $\U_\epsilon(\Glie)\subset\U_\epsilon(\hat{\Glie})$ and so such representations can be parameterized by these $\U_\epsilon(\Glie)$-submodules. We plan to study the corresponding graded branching rules to $\U_\epsilon(\Glie)$ which make sense in this category. The baby Verma modules correspond to central characters which are diagonal (they correspond to loop with value in $D$). Theorem \ref{semis} indicates that baby Verma modules will give informations on simple representations with generic central character.

Our future program includes also to extend the results in \cite{dcprr} for $\U_\epsilon(\Glie)$ to analyse the graded decomposition numbers for the tensor products of these representations. In \cite{dcprr} one of the main point is that $\U_\epsilon(\Glie)$ is finite over its center; the quantum affine algebra $\U_\epsilon(\hat{\Glie})$ is not finite over its center but has a corresponding grading, and so this is a motivating example to study in this spirit a theory of algebras graded over their center (graded Azumaya algebras).


\begin{thebibliography}{99}

\bibitem[A]{a} {\bf M. Atiyah}, {\it Vector bundles over an elliptic curve}, {Proc. London Math. Soc. (3) {\bf 7} 414--452 (1957)}

\mk

\bibitem[ADCKP]{adc} {\bf E. Arbarello, C. De Concini, V. Kac and C. Procesi}, {\it Moduli spaces of curves and representation theory}, {Comm. Math. Phys. {\bf 117}, no. 1, 1--36 (1988)}

\mk

\bibitem[AJS]{ajs} {\bf H. Andersen, J. Jantzen and W. Soergel}, {\it Representations of quantum groups at a $p$th root of unity and of semisimple groups in characteristic $p$: independence of $p$}, {Ast\'erisque  No. {\bf 220}, 321 pp. (1994)}

\mk

\bibitem[Be1]{b} {\bf J. Beck}, {\it Convex bases of PBW type for quantum affine algebras}, {Comm. Math. Phys. {\bf 165},  no. 1, 193--199 (1994)}

\mk

\bibitem[Be2]{b2} {\bf J. Beck}, {\it Representations of quantum groups at even roots of unity}, {J. Algebra {\bf 167} (1994), no. 1, 29--56}

\mk

\bibitem[Bi1]{bi1} {\bf G. Birkhoff}, {\it Singular points of ordinary linear differential equations}, {Trans. Amer. Math. Soc. {\bf 10}, no. 4, 436--470 (1909)}

\mk

\bibitem[Bi2]{bi2} {\bf G. Birkhoff}, {\it Equivalent singular points of ordinary linear differential equations}, {Math. Ann. {\bf 74},  no. 1, 134--139 (1913)}

\mk

\bibitem[Bo]{bo} {\bf P. Boalch}, {\it G-bundles, isomonodromy, and quantum Weyl groups}, {Int. Math. Res. Not. {\bf 2002}, no. 22, 1129--1166}

\mk

\bibitem[BEG]{beg} {\bf V. Baranovsky, S. Evens and V. Ginzburg}, {\it Representations of quantum tori and $G$-bundles on elliptic curves}, {The orbit method in geometry and physics (Marseille, 2000), 29--48,
Progr. Math., 213, Birkhauser Boston, Boston, MA, 2003}

\mk

\bibitem[BG]{bg} {\bf V. Baranovsky and V. Ginzburg}, {\it Conjugacy classes in loop groups and $G$-bundles on elliptic curves}, {Internat. Math. Res. Notices {\bf 1996},  no. 15, 733--751}

\mk

\bibitem[BaK]{bak} {\bf E. Backelin and K. Kremnizer}, {\it Localization for quantum groups at a root of unity}, { J. Amer. Math. Soc. {\bf 21} (2008),  no. 4, 1001--1018.}

\mk

\bibitem[BeK]{bk} {\bf J. Beck and V. G. Kac}, {\it Finite-dimensional representations of quantum affine algebras at roots of unity}, {J. Amer. Math. Soc.  {\bf 9},  no. 2, 391--423 (1996)}

\mk

\bibitem[BMR]{bmr} {\bf R. Bezrukavnikov, I. Mirkovic and D. Rumynin}, {\it Localization of modules for a semisimple Lie algebra in prime characteristic}, {Ann. Math. {\bf 167}, no. 3 (2008)}

\mk

\bibitem[CJ]{cj} {\bf V. Chari and N. Jing,} {\it Realization of level one representations of $U_q(\hat{\Glie})$ at a root of unity} {Duke Math. J. {\bf 108} (2001), no. 1, 183--197}

\mk

\bibitem[CM]{cm} {\bf A. Connes and M. Marcolli}, {\it Noncommutative geometry, quantum fields and motives}, {American Mathematical Society Colloquium Publications, 55. AMS, Providence, RI; Hindustan Book Agency, New Delhi, 2008}

\mk

\bibitem[CP1]{cp1}{\bf V. Chari and A. Pressley}, {\it A Guide to Quantum Groups}, {Cambridge University Press, Cambridge (1994)}

\mk

\bibitem[CP2]{cp} {\bf V. Chari and A. Pressley}, {\it Quantum affine algebras at roots of unity}, {Represent. Theory {\bf 1}, 280--328 (1997)}

\mk

\bibitem[DC]{dc} {\bf C. De Concini}, {\it Poisson algebraic groups and representations of quantum groups at roots of $1$}, {in First European Congress of Mathematics, Vol. I (Paris, 1992),  93--119, Progr. Math. {\bf 119}, Birkhauser, Basel (1994)}

\mk

\bibitem[DCK]{dck} {\bf C. De Concini and V. Kac}, {\it Representations of quantum groups at roots of $1$}, {Operator algebras, unitary representations, enveloping algebras, and invariant theory (Paris, 1989),  471--506, Progr. Math., 92, Birkhauser Boston, Boston, MA, (1990)}

\mk

\bibitem[DCP]{dcp} {\bf C. De Concini and C. Procesi}, {\it Quantum groups}, {in $D$-modules, representation theory, and quantum groups (Venice, 1992),  31--140, Lecture Notes in Math. {\bf 1565}, Springer, Berlin (1993)}

\mk

\bibitem[DCKP]{dckpj} {\bf C. De Concini, V. Kac and C. Procesi}, {\it Quantum coadjoint action}, {J. Amer. Math. Soc. {\bf 5} (1992),  no. 1, 151--189}

\mk

\bibitem[DCKP2]{dckp} {\bf C. De Concini, V. Kac and C. Procesi}, {\it Some quantum analogues of solvable Lie groups}, {Geometry and analysis (Bombay, 1992),  41--65, Tata Inst. Fund. Res., Bombay (1995)}

\mk

\bibitem[DCPRR]{dcprr} {\bf C. De Concini, C. Procesi, N. Reshetikhin and M. Rosso},
{\it Hopf algebras with trace and representations}, {Invent. Math. {\bf 161}, no. 1, 1--44 (2005)}

\mk

\bibitem[DV]{dv} {\bf L. Di Vizio}, {\it Local analytic classification of q-difference equations with
$|q| = 1$}, {J. Noncommut. Geom. {\bf 3} (2009), no. 1, 125--149}

\mk

\bibitem[DVRSZ]{dvrsz} {\bf L. Di Vizio, J.-P. Ramis, J. Sauloy, C. Zhang}, {\it Equations aux $q$-diff\'erences} {Gaz. Math. No. {\bf 96}, 20--49 (2003)}

\mk

\bibitem[Da1]{da0} {\bf I. Damiani}, {\it A basis of type Poincar\'e-Birkhoff-Witt for the quantum algebra of $\widehat{\rm sl}(2)$}, {J. Algebra {\bf 161}  (1993),  no. 2, 291--310}

\mk

\bibitem[Da2]{da} {\bf I. Damiani}, {\it The highest coefficient of ${\rm det}\,H\sb \eta$ and the center of the specialization at odd roots of unity for untwisted affine quantum algebras}, {J. Algebra {\bf 186},  no. 3, 736--780 (1996)}

\mk

\bibitem[Dr2]{Dri2}{\bf V. G. Drinfeld}, {\it A new realization of Yangians and of quantum affine algebras}, {Soviet Math. Dokl. {\bf 36}, no. 2, 212--216 (1988)}

\mk

\bibitem[En1]{e1} {\bf B. Enriquez}, {\it Integrity, integral closedness and finiteness over their centers of the coordinate algebras of quantum groups at $p\sp \nu$th roots of unity}, {Ann. Sci. Math. Qu\'ebec {\bf 19},  no. 1, 21--47 (1995)}

\mk

\bibitem[En2]{e2} {\bf B. Enriquez}, {\it Le centre des alg\`ebres de coordonn\'ees des groupes quantiques aux racines $p\sp \alpha$-i\`emes de l'unit\'e}, {Bull. Soc. Math. France {\bf 122},  no. 4, 443--485 (1994)}

\mk

\bibitem[Et]{et} {\bf P. Etingof}, {\it Quantum Knizhnik-Zamolodchikov equations and holomorphic vector bundles}, {Duke Math. J. {\bf 70}, no. 3, 591--615 (1993)}

\mk

\bibitem[EF]{ef}{\bf P. Etingof and I. Frenkel}, {\it Central extensions of current groups in two dimensions}, {Comm. Math. Phys. {\bf 165} (1994),  no. 3, 429--444}

\mk

\bibitem[EFK]{efk}{\bf P. Etingof, I. Frenkel and A. Kirillov}, {\it Spherical functions on affine Lie groups}, {Duke Math. J. {\bf 80}  (1995),  no. 1, 59--90}

\mk

\bibitem[ES]{es}{\bf P. Etingof and O. Schiffmann}, {\it Lectures on quantum groups. Second edition}, {Lectures in Mathematical Physics. International Press, Somerville, MA, 2002}

\mk

\bibitem[Fa1]{fa} {\bf G. Faltings}, {\it A proof for the Verlinde formula}, {J. Algebraic Geom. {\bf 3}, no. 2, 347--374 (1994)}

\mk

\bibitem[Fa2]{fad} {\bf G. Faltings}, {\it Algebraic loop groups and moduli spaces of bundles}, {J. Eur. Math. Soc. {\bf 5},  no. 1, 41--68 (2003)}

\mk

\bibitem[F]{f2} {\bf E. Frenkel}, {\it Langlands correspondence for loop groups}, {Cambridge Studies in Advanced Mathematics, 103. Cambridge University Press, Cambridge (2007)}

\mk

\bibitem[FF]{ff} {\bf B. Feigin and E. Frenkel}, {\it Affine Kac-Moody algebras and semi-infinite flag manifolds}, {Comm. Math. Phys. {\bf 128}, no. 1, 161--189 (1990)}

\mk

\bibitem[FM]{fm} {\bf E. Frenkel and E. Mukhin}, {\it The $q$-characters at roots of unity}, {Adv. Math. {\bf 171},  no. 1, 139--167 (2002)}

\mk

\bibitem[FMW]{fmw} {\bf R. Friedman, J. Morgan and E. Witten} {\it Principal $G$-bundles over elliptic curves}, {Math. Res. Lett. {\bf 5},  no. 1-2, 97--118 (1998)}

\mk

\bibitem[FR]{fr} {\bf E. Frenkel and N. Reshetikhin}, {\it Quantum affine algebras and deformations of the Virasoro and $W$-algebras} {Comm. Math. Phys. {\bf 178},  no. 1, 237--264 (1996)}

\mk

\bibitem[H]{h} {\bf D. Hernandez}, {\it The $t$-analogs of $q$-characters at roots of unity for quantum affine algebras and beyond}, {J. Algebra {\bf 279},  no. 2, 514--557 (2004)}

\mk

\bibitem[HSW]{hsw} {\bf N. Hitchin, G. Segal and R. Ward}, {\it Integrable systems.
Twistors, loop groups, and Riemann surfaces} {Lectures from the Instructional Conference held at the University of Oxford, Oxford, September 1997. Oxford Graduate Texts in Mathematics, 4. The Clarendon Press, Oxford University Press, New York (1999)}

\mk

\bibitem[Ka]{kac} {\bf V. Kac}, {\it Infinite dimensional Lie algebras}, {3rd Edition, Cambridge University Press (1990)}

\mk

\bibitem[Ku]{ku} {\bf S. Kumar}, {\it Kac-Moody groups, their flag varieties and representation theory}, {Progress in Mathematics, 204. Birkhauser Boston, Inc., Boston, MA (2002)}

\mk

\bibitem[KP]{kp} {\bf V. Kac and D. Peterson}, {\it Defining relations of certain infinite-dimensional groups}, {The mathematical heritage of Elie Cartan (Lyon, 1984). Ast\'erisque 1985,  Numero Hors Serie, 165--208}

\mk

\bibitem[La]{l} {\bf Y. Laszlo}, {\it About $G$-bundles over elliptic curves}, {Ann. Inst. Fourier (Grenoble) {\bf 48}, no. 2, 413--424 (1998)}

\mk

\bibitem[Lu1]{lj} {\bf G. Lusztig}, {\it Finite-dimensional Hopf algebras arising from quantized universal enveloping algebra}, {J. Amer. Math. Soc. {\bf 3}  (1990),  no. 1, 257--296}

\mk

\bibitem[Lu2]{l2} {\bf G. Lusztig}, {\it Introduction to quantum groups}, {Progress in Mathematics, 110. Birkhauser Boston, Inc., Boston, MA (1993)}

\mk

\bibitem[N]{n2} {\bf H. Nakajima}, {\it Quiver varieties and $t$-analogs of $q$-characters of quantum affine algebras}, {Ann. of Math. (2)  {\bf 160},  no. 3, 1057--1097 (2004)}

\mk

\bibitem[NS]{ns} {\bf M. S. Narasimhan and C. S. Seshadri}, {\it Stable and unitary vector bundles on a compact Riemann surface}, {Ann. of Math. (2) {\bf 82} 540--567 (1965)}

\mk

\bibitem[PS]{ps} {\bf A. Pressley and G. Segal}, {\it Loop groups}, {Oxford Mathematical Monographs. Oxford Science Publications. The Clarendon Press, Oxford University Press, New York (1986)}

\mk

\bibitem[Ra]{ra} {\bf A. Ramanathan}, {\it Stable principal bundles on a compact Riemann surface}, {Math. Ann. {\bf 213}, 129--152 (1975)}

\mk

\bibitem[Re]{r} {\bf N. Reshetikhin}, {\it Quasitriangularity of quantum groups at roots of $1$}, {Comm. Math. Phys. {\bf 170} (1995), no. 1, 79--99}

\mk

\bibitem[S]{s} {\bf J. Sauloy}, {\it Equations aux q-differences et fibres vectoriels holomorphes sur la courbe elliptique $\CC^*/q^\ZZ$}, {Ast\'erisque No. {\bf 323} (2009), 397--429.}

\end{thebibliography}
\end{document}